\documentclass[a4paper,12pt]{amsart}
\usepackage{amsmath}
\usepackage{amsfonts}
\usepackage{amssymb}
\PassOptionsToPackage{colorlinks=true, linkcolor=oblue, urlcolor=magenta}{hyperref}

\usepackage{amsthm}
\usepackage{mathtools,hyperref}
\usepackage{tikz,tikz-cd,tikz}
\usetikzlibrary{shapes,arrows}
\usepackage{cleveref}
\usepackage{float}
\usepackage{xcolor}
\usepackage[edges]{forest}
\usepackage{colortbl} 
\usetikzlibrary{fit,positioning}
\usepackage{makecell}

\theoremstyle{plain}
\newtheorem{theorem}{Theorem}[section]

\newtheorem{lemma}[theorem]{Lemma}
\newtheorem*{claim*}{Claim}

\newcommand\Z{\mathbb{Z}}

\newcommand\N{\mathbb{N}}

\theoremstyle{definition}
\newtheorem{remark}[theorem]{Remark}
\newtheorem{fact}[theorem]{Fact}
\newtheorem{definition}[theorem]{Definition}
\newtheorem{example}[theorem]{Example}
\newtheorem{question}[theorem]{Question}    
\newtheorem{proposition}[theorem]{Proposition}
\newtheorem{corollary}[theorem]{Corollary}
\newtheorem{observation}[theorem]{Observation}

\usetikzlibrary{math} 
\usepackage{xfp}
\newcommand{\boxtwo}[2]
{
\begin{tikzpicture}
  \draw[draw=black] (0.5,-0.5) rectangle ++ (2,1);
  \node[below] at (1.5,-0.5) {#1};
  \foreach \i in {1,2}{
        \shade[ball color = gray!40, opacity = 0.4] (\i,0) circle (0.25 cm);
        \draw (\i,0) circle (0.25cm);
        \tikzmath{\t={\i+#2-1};}
        \node at (\i,0)  {\fpeval{\t}}; }
\end{tikzpicture}}
\newcommand{\boxone}[2]
{
\begin{tikzpicture}
  \draw[draw=black] (0.5,-0.5) rectangle ++ (2,1);
  \node[below] at (1.5,-0.5) {#1};
        \shade[ball color = gray!40, opacity = 0.4] (1.5,0) circle (0.25 cm);
        \draw (1.5,0) circle (0.25cm);
        \tikzmath{\t={1+#2-1};}
        \node at (1.5,0)  {\fpeval{\t}}; 
\end{tikzpicture}}
\definecolor{oblue}{rgb}{0.0, 0.37, 0.75}
\usepackage{enumerate}
\usepackage{geometry}
\newtheorem{conjecture}[theorem]{Conjecture}
\begin{document}
\title[Box Progressions, Abelian Power-free Morphisms and A Sieve Technique]{Box Progressions, Abelian Power-free Morphisms and A Sieve Technique for the Template Method}
\author[S. Ey\.{I}do\u{g}an]{Sad{\i}k Ey\.{I}do\u{g}an}
\address{
Department of Mathematics, Faculty of Science and Literature, \c{C}ukurova University, 
01330 Adana, Turkey}
\email{seyidogan@cu.edu.tr}	

\author[H. G\"{o}ral]{Haydar G\"{o}ral}
\address{
	Department of Mathematics, Izmir Institute of Technology, 35430 Urla, Izmir,
Turkey}
\email{haydargoral@iyte.edu.tr}

\author[N. Tanısalı]{N\.{I}han Tanısalı}
\address{
	Centre de recherche INRIA Saclay }
\address{
        \'Ecole Polytechnique, Institut Polytechnique de 
        Paris, 1 rue Honor\'e d’Estienne d’Orves, 91120 Palaiseau Cedex
	France}
\email{nihan.tanisali@inria.fr}

\date{\today}

\thanks{2020 $Mathematics\ subject\ classification.$
 68R15, 68R05, 11B25.}  
\keywords{combinatorics on words, abelian power-free words, template method, arithmetic progressions.}

\begin{abstract}   
Given balls and boxes both enumerated by the positive integers, we consider a sequential allocation of the balls into the boxes. We fix $\ell \ge 2 $. Proceeding in increasing order of box labels, assign to each box the next $r$ smallest balls for some $ 1\leq r\leq \ell$.  
Given an integer $k \ge 3$, is there a natural number $N$ such that in any placement of $N$ balls into boxes, there exist 
$k$ balls whose labels and box labels each form a $k$-term arithmetic progression?
We address this question by identifying abelian power-free fixed points of morphisms over a binary alphabet. We present sufficient conditions under which a morphism is abelian $k$-power-free. Our conditions extend  Dekking's result over a binary alphabet and offer a weaker, yet more effective alternative to Carpi’s. Combining Dekking's result with the template method of Currie and Rampersad, we develop a sieve technique that significantly reduces the number of parents that must be examined to establish abelian power-freeness. We then identify a binary morphism that is abelian 16-power free (but not abelian $15$-power free) with an abelian 14-power free fixed point, demonstrating the strength of our technique in verifying abelian power-freeness.
Furthermore, we give a binary morphism which is not abelian power-free, yet has an abelian $5$-power free fixed point.
These results offer novel examples of morphisms whose fixed points exhibit stronger abelian power-freeness than the corresponding morphisms.
\end{abstract}

\maketitle
\pagestyle{headings}
\tableofcontents
\section{Introduction}
An arithmetic progression of length $k$, or a $k$-term arithmetic progression, or a $k$-AP for short, is a sequence of natural numbers $a_1, a_2, \dots, a_k$ 
where the difference between any two consecutive terms is constant. If the common difference is $d$, then we have
$a_i = a_1 + (i-1)d$ for all $i = 1, \dots, k$. If $d=0$, then the arithmetic progression is called trivial.  We say that a subset $A$ of the positive integers contains arbitrarily long arithmetic progressions if it contains a non-trivial $k$-AP for any $k \ge 3.$ The question of whether arithmetic progressions are present in large sets has led to some of the most famous results in mathematics. In 1927, van der Waerden \cite{Van} made a fundamental contribution by proving that for any partition of the positive integers into a finite number of classes, there are arbitrarily long arithmetic progressions in at least one class.\\ 

Following this, extensive efforts were made to understand the arithmetic structure in large sets. In 1936, Erd\H{o}s and Tur\'an \cite{Erd} conjectured that any subset of the positive integers with positive upper density contains arbitrarily long arithmetic progressions. Formally, the upper density of a set $A \subseteq \mathbb{Z}_{>0}$ is defined by
$$ \overline{\delta}(A) = \limsup_{N \to \infty} \frac{|A \cap \{1, \dots, N\}|}{N}. $$
This conjecture, known as the Erd\H{o}s--Tur\'an conjecture, implies that positive upper density is sufficient to ensure the presence of arithmetic progressions, and it connects the size of a set with its internal structure. 
The first breakthrough on this conjecture was made by Roth \cite{Rth} in 1953, who proved that subsets of the positive integers with positive upper density contain infinitely many non-trivial 3-term arithmetic progressions. This was a significant step forward, but the more general case remained unresolved until 1975. Szemerédi \cite{Szemere} provided a complete proof for all lengths. More precisely, he obtained that any subset of the positive integers with positive upper density contains arbitrarily long arithmetic progressions. Szemer\'edi's theorem stands as a monumental achievement, not only confirming the Erd\H{o}s-Tur\'an conjecture, but also expanding our understanding of the intrinsic order within the positive integers.
Later on, Szemer\'edi's theorem was reproved by Furstenberg \cite{HF} in 1977 using ergodic theory and by Gowers \cite{Gow} in 2001 using Fourier analysis. The ergodic theoretic proof gave rise to a generalization of Szemer\'edi's theorem, namely the multidimensional Szemer\'edi theorem, see \cite{FK}.
To state this, we need the corresponding notion of upper density. For a subset $A \subseteq (\mathbb{Z}_{>0})^d$, its upper density is defined by
$$ \overline{\delta_d}(A) = \limsup_{N \to \infty} \frac{|A \cap \{1, \dots, N\}^d|}{N^d}.$$
The multidimensional Szemer\'edi theorem asserts that if $A$ is a subset of $(\mathbb{Z}_{>0})^d$ with positive upper density, then for any finite subset $F$ of $(\mathbb{Z}_{>0})^d$ there exist a positive integer $a$ and an element $b \in (\mathbb{Z}_{>0})^d$
such that $aF+b$ is a subset of $A$ (an affine copy of $F$ lies in $A$).\\

We are now ready to present the setting of the problem of interest in this work. Fix a (possibly infinite) collection of balls and boxes, both enumerated by the positive integers. We distribute the balls over the boxes according to the following  rules:

 \begin{enumerate}
     \item There are no empty boxes.
     \item There exists a positive integer $\ell \in \mathbb{Z}_{>0} $ such that each box has at most $\ell$ balls.
     \item For any $i \in \Z_{>0}$, each ball in the $i$-th box is smaller than the balls in the $(i+1)$-th box. 
 \end{enumerate}
For a fixed $\ell$, a distribution satisfying (1)–(3) is called an $\ell$-ball–box distribution. A ball-box distribution is an $\ell$-ball-box distribution for some $\ell \ge 2.$
More mathematically, we can see an $\ell$-ball-box distribution as a surjection 
$$ \mathcal{D} : \mathbb{Z}_{>0} \to \mathbb{Z}_{>0}$$
which maps the positive integer $i$ to $\mathcal{D}(i)$ where $\mathcal{D}(i)$ is the label of the box that contains the ball $i$. The conditions for a surjection corresponding to an $\ell$-ball-box distribution are: 
\begin{enumerate}
\item $ \mathcal{D}^{-1} (i)$ is non-empty and has size at most $\ell$ for all $i\in\mathbb{Z}_{>0}$,
\item $\mathcal{D}(i) \leq \mathcal{D} (i+1)$ for all $i \in \mathbb{Z}_{>0}$.
\end{enumerate}
One example of such a distribution where $ \ell=2$ starts as follows: 
\begin{figure}[H]\label{Figure example ball-box}
\begin{center}
\boxone{1}{1}
\boxone{2}{2}
\boxtwo{3}{3}
\boxtwo{4}{5}
\boxone{5}{7}
\boxtwo{6}{8}
\boxtwo{7}{10}
\boxone{8}{12}
\boxtwo{9}{13}
\boxone{10}{15}
\boxone{11}{16}
\boxtwo{12}{17}
\boxtwo{13}{19}
\boxtwo{14}{21}
\boxone{15}{23}
\begin{tikzpicture}
  \draw[draw=white] (0.5,-0.5) rectangle ++ (1,1);
  \node[below] at (1,-0.5) {};
  \node[gray,below] at (1,0.5) {\huge{$\cdots$}};
\end{tikzpicture}
\end{center}
\caption{An example of a 2-ball-box distribution}
\end{figure}
Both
the balls $3,8,13,18,23$ and their box numbers $ 3,6,9,12,15$ are 5-term arithmetic progressions, respectively. To refer to these type of progressions, we give the following definition.
\begin{definition}
Given an $\ell$-ball-box distribution,
    a sequence of positive integers $ h_1, \ldots, h_k$ is called a $k$-term box progression ($k$-BP in short) if
    \begin{enumerate}
        \item $ h_1,\ldots,h_k$ is a non-trivial $k$-term arithmetic progression,
        \item $ b_1,\ldots, b_k$ is a $k$-term arithmetic progression (possibly a trivial progression) where $ h_i$ belongs to the $b_i$-th box for all $1\leq i\leq k$.
    \end{enumerate}
\end{definition}
Now, we state the main question of this paper below and then we link it to combinatorics on words.
\begin{question}\label{main question}
    For any $\ell$-ball-box distribution $ \mathcal{D} $, are there arbitrarily long BP's for $ \mathcal{D} $ with respect to the above rules?
\end{question}
Note that the set 
\begin{align*}
    \{( n, \mathcal{D} (n)): n\in \Z_{>0}  \} \subseteq \Z_{>0} \times \Z_{>0}
\end{align*}
has zero upper density. Therefore, the multidimensional Szemer\'edi theorem \cite{FK} is not applicable to answer this question. This question is non-trivial starting from $\ell  \geq 2$. 
Our approach is to use combinatorics on words as this question is closely related to the repetition-free word problem, see Proposition \ref{bp-word}. 
These types of problems are known as avoidability problems in the literature. A classical avoidability problem is the following:
does there exist an infinite word over a finite alphabet which does not contain consecutive
repetitions of non-empty factors (squares)? At the beginning of the last
century, Thue \cite{Thue06, Thue12} proved  that there exist sequences on 3 symbols which
contain no 2 identically equal consecutive factors (square-free), and there exist sequences on 2 symbols which
contain no 3 identically equal consecutive factors (cube-free). In 1961, Erd\H os \cite{erd61} asked whether there exist infinite abelian square-free words over a given finite alphabet of size at least 4. An abelian square is a finite non-empty word $ uv $, where $ u $ and
$ v $ are permutations (anagrams) of each other. Evdokimov \cite{Evdomi} and Pleasants \cite{Pleasants} gave solutions for alphabets of
sizes 25 and 5, respectively. Dekking \cite{dekking} proved that over a binary alphabet there exist infinite words that are
abelian 4-power free. The problem of whether abelian squares can be avoided over a four-letter alphabet had been open for a long time. In 1992, using a combination of computer checking and mathematical techniques,  Ker\"{a}nen \cite{Keranen} proved the Erd\H os conjecture on the avoidance of abelian squares for some infinite words over a four-letter alphabet. 
In 2007 and 2008, Currie and Visentin \cite{C-V-2007, C-V-2008} constructed the first example of a binary pattern that is abelian 2-avoidable despite containing no abelian 4-power. Additionally, they established that any binary pattern of length greater than 118 is abelian 2-avoidable. \\

The results on avoidability problems show that the intriguing case of the problem mentioned above is when $ \ell = 2 $. While the first impression might suggest a positive answer to our problem, a closer examination reveals that the answer is actually negative. To put it more clearly, we show that there is a $2$-ball-box distribution for which all box progressions are of length at most 6. 
Our problem is actually equivalent to whether certain types of infinite words on binary alphabets have the property of being abelian power-free. 
Let $\Sigma=\{a, b\}$ be a binary alphabet.
Our goal is to avoid abelian powers while also avoiding $bb$ and some power of $a$. For this purpose, we prove that there exists an abelian 6-power free morphism $h$ on $\Sigma$ with an infinite fixed point $ \Omega_1 $ such that (see Theorem \ref{4 consecutive a 6AP})
\begin{itemize}
	\item $ aaaaa $ is not a factor of $ \Omega_1 $  (there are at most $4$ consecutive $a$'s in $ \Omega_1 $),
		\item $ bb $ is not a factor of $ \Omega_1 $  (there are no two consecutive $b$'s in $ \Omega_1 $),
		\item $ \Omega_1 $ is abelian $ 6 $-power free (as $h$ is abelian 6-power free).	
\end{itemize}
\vspace{2mm}

In the study of infinite words over a finite alphabet, the property of being abelian power-free is well understood, with three primary approaches which decide whether a morphism or its fixed points possess this property. These include Dekking's result \cite{dekking}, Carpi's conditions \cite{Carpi93}, and template methods \cite{C-R2012, Rao-R}. In this manuscript, we present two new methods that advance the field by extending and refining these approaches.\\

Our first contribution is a new set of sufficient conditions for determining whether a morphism is abelian $k$-power free, see Theorem \ref{our result}.
While our new set of conditions extends Dekking's result over a binary alphabet, it remains less robust but more effective than Carpi's method, striking a balance between simplicity and effectiveness. We also consider Carpi's conjecture and its relation to abelian power-freeness of fixed points.

\begin{conjecture}[Carpi]
    The conditions in Theorem \ref{carpi} are necessary and sufficient for a morphism to be abelian power-free over alphabets of size less than six.
\end{conjecture}

Our second contribution is the analysis of abelian power-freeness in the fixed points of morphisms. Drawing inspiration from Dekking’s approach and the template method of Currie and Rampersad, we develop a technique to verify whether an infinite fixed point of a morphism avoids abelian powers, even when the morphism itself does not. 
We introduce a sieve technique that immensely reduces the number of parents that need checking. To illustrate this, our technique reduces the number of parents and ancestors of the morphism in Proposition~\ref{SievingT8} from 1953162 and 1953289 to 8 and 224, respectively.
Using this sieve technique, we identify a binary morphism that is abelian 16-power free (but not abelian $15$-power free), while its fixed point is abelian 14-power free. 
Moreover, we find a binary morphism which is not abelian power-free, but has an abelian $5$-power free fixed point. In fact, by improving upon the aforementioned infinite word $ \Omega_1 $,
we prove that there exists an infinite word $ \Omega$ on $\Sigma=\{a, b\}$ such that
\begin{itemize}
		\item $ bb $ is not a factor of $ \Omega $  (there are no two consecutive $b$'s in $ \Omega $),
		\item $ \Omega $ is abelian $ 5 $-power free.	
\end{itemize}
These results provide novel examples of morphisms whose fixed points exhibit stronger abelian power-freeness than the corresponding morphisms.\\

Table \ref{label d1} presents our results regarding the avoidability of abelian powers, as well as of the factors $bb$ and some powers of $a$ in the fixed points of certain morphisms on $\Sigma=\{a, b\}$. 
\begingroup
\tiny{
	\begin{table}[H]
		\caption{Avoided abelian powers for the binary morphisms and their fixed points from our main results\\}
		\centering 
            \renewcommand{\arraystretch}{1.5} 
		\begin{tabular}{|c |c| c|c| }
			\hline 
			\makecell[c]{Results} & $h$ & $ h^{\omega}(a) $ & { \makecell[c]{Avoided\\ Abelian \\ Power\\ $(i)$ Morphism\\ $(ii)$ Fixed Point} } \\[0.5ex] 
			\hline 
                \hline
			Theorem \ref{4 consecutive a 6AP}& $ \begin{array}{lcl} 
			a &\mapsto& aaaba \\  b  &\mapsto& bab
			\end{array} $ & $ a a a b a a a a \ \underline{b a a} \ \underline{a a b } \  \underline{a b a} \ \underline{b a a} \ \underline{a b a} \ a a a b  \cdots$ & $\begin{array}{c} (i)\ 6 \\ (ii) \ 6 \end{array}$ \\ 
			\hline 	\makecell[c]{Theorem \ref{3 consecutive a 9AP} \\and Proposition \ref{SievingT8} } & $ \begin{array}{lcl} 
			a &\mapsto& abaaaba\\  b  &\mapsto& babab  
			\end{array} $ & $ \begin{array}{lcl} 
			a b a a a b a b a b a b a b a a a b a a b a a a  \ \underline{b a a b a}   \ \underline{ a a b a b} \\  \underline{a b a b a} \ \underline{ b a a a b} \  \underline{a b a b a } \ \underline{ b a b a a}  \ \underline{a b a b a} \  b a b a b a \cdots			 
			\end{array} $ & $\begin{array}{c}(i) \ 9 \\ (ii)\ 9 \end{array}$ \\
			\hline 	\makecell[c]{Theorem \ref{2 consecutive a 16AP}\\and Theorem \ref{T14-template}} & $ \begin{array}{lcl} 
			a &\mapsto& abaabaababa \\  b  &\mapsto& babababab  
			\end{array} $ & $ \begin{array}{lcl} 
			a b a a b a a b a b a b a b a b a b a b a b a a b a a b a b a a b a  a b a a b a b a b a b a b a \\ b  a b a b a a b a a \
			\underline{b a b a a b a a b} \ 
			\underline{a a b a b a b a b} \
			\underline{a b a b a b a b a} \
			\underline{a b a a b a b a b} \\
			\underline{a b a b a b a b a} \
			\underline{b a a b a a b a b} \
			\underline{a b a b a b a b a} \
			\underline{b a b a a b a a b} \
			\underline{a b a b a b a b a} \\
			\underline{b a b a b a a b a} \
			\underline{a b a b a b a b a} \
			\underline{b a b a b a b a a} \
			\underline{b a a b a b a b a}\
			b a b a b a 
			\cdots
			\end{array} $ &  $\begin{array}{c} (i) \ 16 \\ (ii) \ 14 \end{array}$ \\
			\hline 
			 Theorem \ref{5 consecutive a 6AP} & $ \begin{array}{lcl} 
			a &\mapsto& ababa \\  b  &\mapsto& aaaaba  
			\end{array} $ & $ abab \ \underline{a} \ \underline{a} \ \underline{a} \ \underline{a} \ \underline{a} \ baaba  \cdots$ & $\begin{array}{c} (i) \ 6 \\ ii) \ 6 \end{array}$\\ 
			\hline 
			Theorem \ref{4 consecutive a 8AP}& $ \begin{array}{lcl} 
			a &\mapsto& aaabaaaba \\  b  &\mapsto& babaaaba  
			\end{array} $ & $ \begin{array}{lcl} 
			&aaabaaabaaaabaaabaaaabaaababa \ \underline{baaa} \ \underline{baaa} \ \underline{abaa} \\ & \underline{abaa} \ \underline{aaba} \ \underline{aaba} \  \underline{aaab} \ \underline{aaab} \ ababaaa  \cdots
			\end{array} $ & $\begin{array}{c} (i) \ 9 \\ (ii) \ 9 \end{array}$ \\
			\hline
				Theorem \ref{3 consecutive a 11AP} & $ \begin{array}{lcl} 
			a &\mapsto& abaaabaaaba \\  b  &\mapsto& ababababa  
			\end{array} $ & $ \begin{array}{lcl} 
			a  b  a  a  a  b  a  a  a  b  a  a  b  a  b  a  b  a  b  a  a  b  a  a  a  b  a  a  a  b  a  a  b  a  a  a  b  a  a  a  b  a  a \\  b  a  a  a  b  a  a  a  b  a  a  b  a  b   a  b  a  b  a  a  b  a  a  a  b  a  a  a  b  a  a  b  a  a  a  b  a  a  a  b  a  a  b  \\ a  a  a  b  a  a  a  b  a  a  b  a  b  a  b  a  b  a  a  b  a  a  a  b  a  a  a  b  a  a  b  a  a  a  b  a  a  a  b  a  a  b  a  \\b  a   b  a  b  a  a  b  a  a  a  b  a  a  a  b  a  a  b  a  b  a  b  a  b  a  a  b  a  a  a  b  a a  a  b  a  a  b  a  b  a  b  a    \\     \underline{b  a  a  b  a  a  a  b  a  a  a  b  a  a  b  a  b  a  b  a  b  a  a  b  a  a  a  b  a  a  a  b  a  a  b  a  a  a }    \\     
			\underline{b  a  a  a  b  a  a  b  a  b  a  b  a  b  a  a  b  a  a  a  b  a  a  a  b  a  a  b  a  a  a  b  a  a  a  b  a  a  }    \\       
			\underline{b  a  a  a  b  a  a  a  b  a  a  b  a  b  a  b  a  b  a  a  b  a  a  a  b  a  a  a  b  a   a  b  a  a  a  b  a  a  }    \\               
			\underline{a  b  a  a  b  a  a  a  b  a  a  a  b  a   a  b  a  b  a  b  a  b  a  a  b  a  a  a  b  a  a  a  b  a  a  b  a  a  }    \\               
			\underline{a  b  a  a  a  b  a  a  b  a  b  a  b  a  b  a  a  b  a  a  a  b  a  a  a  b  a  a  b  a  a  a  b  a  a  a  b  a  } \\
			\underline{ a  b  a  a  a  b  a  a  a  b  a  a  b  a  b  a  b  a  b  a  a  b  a  a  a  b  a  a  a  b  a  a  b  a  a  a  b  a }     \\             
			\underline{a  a  b  a  a  b  a  a  a  b  a  a  a  b  a   a  b  a  b  a  b  a  b  a  a  b  a  a  a  b  a  a  a  b  a  a  b  a }      \\             
			\underline{a  a  b  a  a  a  b  a  a  b  a  b  a  b  a  b  a  a  b  a  a  a  b  a  a  a  b  a  a  b  a  a  a  b  a  a  a  b  }    \\              
			\underline{a  a  b  a  a  a  b  a  a  a  b  a  a  b  a  b  a  b  a  b  a   a  b  a  a  a  b  a  a  a  b  a  a  b  a  a  a  b }            \\
			\underline{a  a  a  b  a  a  b  a  a  a  b  a  a  a  b  a  a  b  a  b  a  b  a  b  a  a  b  a  a  a  b  a  a  a  b  a   a  b  }   \ 
			a  a  a  b  a  a  \cdots \vspace{0.7cm}		 
			\end{array} $ & $\begin{array}{c}(i) \ 11\\ (ii) \ 11 \end{array}$ \\
			\hline 
            \hspace{0.4cm} Theorem \ref{5free}\hspace{0.4cm} & $ \begin{array}{lcl} 
			a &\mapsto& aaaab \\  b  &\mapsto& ababab
			\end{array} $ &  $  \underline{a}\ \underline{a}\ \underline{a}\  \underline{a} \ b\ aaaab \ aaaab \ aaaab \ ababab \ aaaab   \cdots$ & $\begin{array}{c}  (i) \ \infty\\ (ii) \ 5 \end{array}$ \\ 
			\hline 
		\end{tabular}
		\label{label d1}
    \end{table}
}
\endgroup

The results of Section \ref{Uniform Section}, summarized in Table \ref{section 4 table}, establish  uniform bounds for binary words that avoid $bb$, small powers of $a$, and small abelian powers. Together with Table \ref{label d1}, this provides a complete resolution of the $2$-ball-box distribution problem.
\begingroup
\small
\begin{table}[H] 
\centering
\caption{ Summary of Section \ref{Uniform Section}: On the Uniform Bounds of Lengths of Words Containing Abelian Powers: Computer-Assisted Results  }
\label{table:imp}
\begin{tabular}{|c|c|c|c|}
\hline
\hspace{0.6cm} Results \hspace{0.6cm}&Avoided factors & Avoided abelian $k$-power &  \hspace{0.25cm}Length of the longest word  \hspace{0.25cm} \\
\hline\hline
Figure \ref{fig: word 9} & $bb$ & $ 3$& $8$  \\
Proposition\ref{p41}  & $bb$ & $4$& $17$\\
Proposition \ref{aaa,bb,25,5power} &$aaa$, $bb$ & $ 5$& $24$ \\
Proposition \ref{aaaa,bb,78,5power} &$aaaa$, $bb$ & $ 5$& $77$  \\
\hline
\end{tabular}
\label{section 4 table}
\end{table}
\endgroup

\textbf{A Short Outline of the Paper:} In Section \ref{Sec2}, we will present the necessary definitions related to combinatorics on words. Then, we will see the relation between box progressions with respect to a ball-box distribution and the abelian power factors of an infinite word. We also give our first main theorem (Theorem \ref{our result}) which yields a new set of conditions for a morphism to preserve abelian $ k $-power free words, and these conditions are effective. Using this theorem, we will construct three morphisms over the binary alphabet $\{a, b\}$ that have fixed points avoiding the factor $bb$ while being abelian 6, 9, and 16-power free, respectively. In Section \ref{Sieve Section}, we will develop a sieve technique for the template method to verify whether a fixed point of a morphism avoids abelian powers. Moreover,
we will give an improved version of the inverse parent lemma (Lemma \ref{new search bound}) which is essential due to the high powers we aim to analyze (abelian 8-powers in Proposition \ref{SievingT8} and abelian 14-powers in Theorem \ref{T14-template}).
Then, using this sieve technique, we will identify a binary morphism that is abelian 16-power free but not abelian $15$-power free, while its fixed point is abelian 14-power free. 
In Section \ref{Uniform Section}, we will focus on constructing the longest possible words over a binary alphabet that avoid certain factors while remaining abelian power-free.
In Section \ref{not_abelian_power_free}, we will obtain a binary morphism which is not abelian power-free, yet has an abelian $5$-power free fixed point.
In Section \ref{some_applications}, we will explore three different applications of the ball-box distribution problem by highlighting its relevance to combinatorics on words.  Finally, in Section \ref{open problems}, we will give some open problems emerging from our results. 
\section{From Ball-Box Distributions to Words} \label{Sec2}

In what follows, we refer the reader to Lothaire's and Shallit's books \cite{Lot, Shallit}. Let $\Sigma$ be a finite set (a finite alphabet). The elements of $\Sigma$ are called letters. We let $\Sigma^*$ and $\mathbb Z^\Sigma$ denote to the free monoid and the free $\mathbb Z $-module generated by $\Sigma$, respectively. An element of $\Sigma^*$ is called a word. The empty word is denoted by $\epsilon$. An infinite word over the alphabet $\Sigma$ is a non-ending sequence of elements of $\Sigma$, that is, a map from $\mathbb N$ to $\Sigma$. We represent the set of infinite words by $\Sigma^{\mathbb N}$. For the following definitions, we list the elements in $\Sigma$ as $ \{ \sigma_1,\ldots ,\sigma_m\}$.\\

We denote the concatenation of two words $ u \in \Sigma^* $ and $ v \in \Sigma^* $ by $ uv. $ For example, given two words $ u = aba $ and $ v=bbba $ over the binary alphabet $\Sigma=\left\lbrace a,b\right\rbrace$, the concatenation of $u $ with $v$ is $ uv=ababbba. $\\

A word $u$ is called a factor of another word $v$ if there exist words $ s, t\in \Sigma^*$ such that $v= sut$. The factor $u$ is called a prefix if $ s= \epsilon$, and a suffix if $ t=\epsilon$. For a word $u\in \Sigma^* $, we set $ \operatorname{Pref} ( u) = \{ v \in \Sigma^* : v \text{ is a prefix of }u \}$ and $ \operatorname{Suff} ( u) = \{ v \in \Sigma^* : v \text{ is a suffix of }u \}$. \\

Let $ u \in \Sigma^*$ be a word over the alphabet $\Sigma$. The Parikh vector of $u$ is denoted by $$\psi (u)=(u_1, \ldots, u_m) \in \N^{m},$$ and it is defined as the vector of the number of appearances of $ \sigma_i $'s in $u$, that is to say, $u_i$ is the number of appearances of $ \sigma_i $ in $u$. The length of $u$ is denoted by $|u|$, and we have $|u|=\displaystyle\sum_{i=1}^{m}u_i$. 
Recall that two finite words $u$ and $v$ from $\Sigma^*$ are said to be abelian equivalent and we write $u \sim v $, if $u $ is obtained from $v$ by permuting the letters (they are anagrams of each other). 
In other words, $\psi(u)= \psi(v)$.\\

Let $\Sigma_1$ and $\Sigma_2$ be two alphabets with $m_1$ and $m_2$ elements, respectively. A morphism from  $\Sigma_1^*$  to $\Sigma_2^*$ is a map $h$ such that for every word $u=vw \in \Sigma_1^*,$ one has
$h(u)=h(v)h(w)$.
The frequency matrix of $ h$ is an $m_1\times m_2$ matrix whose $ i$-th row is given by the vector $\psi (h(\sigma_i)) $. For a morphism $ h: \Sigma_1^* \to \Sigma_2^* $, we define \begin{align*}
\operatorname{Pref}{(h)} &:= \bigcup_{ \sigma\in \Sigma_1} \operatorname{Pref}(h(\sigma)), \\
\operatorname{Suff}{(h)} &:= \bigcup_{ \sigma\in \Sigma_1} \operatorname{Suff}(h(\sigma)). 
\end{align*}
The set of Parikh vectors of $h$ is denoted by $\psi(\operatorname{Pref}{(h)}).$

\begin{definition} Let $k$ be a positive integer.
    An abelian $k$-power free word, or a word avoiding abelian $k$-powers,  is a word $u$ (finite or infinite) containing no $k$ consecutive factors that are permutations of one another.
     In other words, if $u$ is written as $u= xv_1\cdots v_k y$ where $v_1, \ldots,v_k$ are words with  $\psi(v_1) = \cdots =\psi(v_k)$, then $ v_1= \cdots = v_k=\epsilon$.
 \end{definition}

\begin{definition}
Let $\Sigma_1$ and $\Sigma_2$ be two alphabets and $n$ be a positive integer. A morphism $h$ from  $\Sigma_1^*$  to $\Sigma_2^*$ is said to be abelian $n$-power free if for any abelian $n$-power free word $u$, the word $h(u)$ is also abelian $n$-power free.
\end{definition}

\begin{definition}
Let $G$ be an abelian group. An arithmetic progression of length $k$ in $G$ is a sequence of the form
$$ a, a+d, \ldots ,a+(k-1)d $$
for some $a,d\in G$. If $d=0$, then the arithmetic progression is called trivial. 
Let $A$ be a subset of $G$. 
Define $ \operatorname{a-rk}(A)$ to be the length of a longest non-trivial arithmetic progression in $A$. If this length is unbounded, we let  $ \operatorname{a-rk} (A)=\infty$.
\end{definition}
For example, consider $G =\Z/6\Z $ and $A= \{\overline{0},\overline{3}\}$. Then $\operatorname{a-rk} (A) =\infty $ as the sequence $ \overline{0},\overline{3},\overline{0},\overline{3} ,\ldots$ is an arithmetic progression of infinite length. If $p$ is a prime and $ A $ is a proper subset of $G=\Z/p\Z$, then $\operatorname{a-rk } (A) \leq p-1$. \\

Now, we will give the relation between ball-box distributions and infinite words.
Any ball-box distribution $\mathcal{D}$ is uniquely identified with an infinite word $w_\mathcal{D}$ over the binary alphabet $\Sigma=\{a,b\}$ using the following rules: 
\begin{enumerate}
    \item The $ i $-th letter is $a$ if $ i$ and $i+1$ are in the different boxes.
    \item The $ i $-th letter is $b$ if $ i$ and $i+1$ are in the same box.
\end{enumerate}
For example, the ball-box distribution in Figure \ref{Figure example ball-box} corresponds to the infinite word:
\begin{align*} 
    aababaababaabaaabababa \ldots
\end{align*}
\begin{figure}[H]\label{ball box distribution figure}
\begin{tikzpicture}
\definecolor{oblue}{rgb}{0.0, 0.37, 0.75}
  \node[below] at (1,-0.5) {1};
  \node[below] at (2,-0.5) {2};
  \node[below,red!60] at (3.5,-0.5) {3};
  \node[below] at (5.5,-0.5) {4};
  \node[below] at (7,-0.5) {5};
  \node[below,red!60] at (8.5,-0.5) {6};
  \node[below] at (10.5,-0.5) {7};
  \node[below] at (12,-0.5) {8};
  \node[below,red!60] at (13.5,-0.5) {9};
  \foreach \i in {1,...,14}{
        \shade[ball color = gray!40, opacity = 0.4] (\i,0) circle (0.25 cm);
        \draw (\i,0) circle (0.25cm);
        \node at (\i,0)  {\fpeval{\i}};
        }

        \shade[ball color = red, opacity = 0.4] (3,0) circle (0.25 cm);       
        \shade[ball color = red, opacity = 0.4] (8,0) circle (0.25 cm);   
        \shade[ball color = red, opacity = 0.4] (13,0) circle (0.25 cm);   
\foreach \i in {1,2,7,12}{
  \draw [very thick,gray] (-0.35+\i,-0.5) to (0.35+\i,-0.5);}
  \foreach \i in {3,5,8,10,13}{
  \draw [very thick,gray] (-0.35+\i,-0.5) to (0.35+\i+1,-0.5);}
  
  \draw [fill=gray,gray!20] (3.2,0.5) rectangle (7.8,1); 
  \draw [fill=gray,gray!20] (8.2,0.5) rectangle (12.8,1);

  \foreach \i in {1,2,4,6,7,9,11,12}{
  \draw [->,thick,orange] (\i+0.1,0.35) to [out=30,in=150] node[above] {a}(\i+0.9,0.35);}
  \foreach \i in {3,5,8,10,13}{
  \draw [->,thick,oblue] (\i+0.1,0.35) to  [out=30,in=150] node[above] {b} (\i+0.9,0.35);}
\end{tikzpicture}
\quad
\quad
\begin{tikzpicture}
\definecolor{oblue}{rgb}{0.0, 0.37, 0.75}
\node[below,red!60] at (13.5,-0.5) {9};
\node[below] at (15,-0.5) {10};
\node[below] at (16,-0.5) {11};
\node[below,red!60] at (17.5,-0.5) {12};
\node[below] at (19.5,-0.5) {13};
\node[below] at (21.5,-0.5) {14};
\node[below,red!60] at (23,-0.5) {15};
    \draw [fill=gray,gray!20] (13.2,0.5) rectangle (17.8,1); 
    \draw [fill=gray,gray!20] (18.2,0.5) rectangle (22.8,1);
    \foreach \i in {13,...,23}{
        \shade[ball color = gray!40, opacity = 0.4] (\i,0) circle (0.25 cm);
        \draw (\i,0) circle (0.25cm);
        \node at (\i,0)  {\fpeval{\i}};
        }
        \shade[ball color = red, opacity = 0.4] (13,0) circle (0.25 cm);   
        \shade[ball color = red, opacity = 0.4] (18,0) circle (0.25 cm);   
        \shade[ball color = red, opacity = 0.4] (23,0) circle (0.25 cm);   
      
    \foreach \i in {13,17,19,21}{
    \draw [very thick,gray] (-0.35+\i,-0.5) to (0.35+\i+1,-0.5);}
    \foreach \i in {15,16,23}{
    \draw [very thick,gray] (-0.35+\i,-0.5) to (0.35+\i,-0.5);}
    \foreach \i in {14,15,16,18,20,22}{
    \draw [->,thick,orange] (\i+0.1,0.35) to [out=30,in=150] node[above] {a}(\i+0.9,0.35);}
    \node[gray] at (24,0)  {\huge$\cdots$}; 
    \foreach \i in {13,17,19,21}{
    \draw [->,thick,oblue] (\i+0.1,0.35) to  [out=30,in=150] node[above] {b} (\i+0.9,0.35);}
\end{tikzpicture}
\caption{An example of the correspondence between an abelian 4-power and a 5-BP}
\end{figure}
Figure \ref{ball box distribution figure} illustrates the correspondence between the abelian $4$-power
$$\underline{babaa} \ \underline{babaa} \ \underline{baaab} \ \underline{ababa} $$ in the word and the $5$-BP of the balls $3,8,13,18,23$ which lie in the boxes with numbers $3,6,9,12,15$, respectively. In the following proposition, we prove that there is a correspondance between abelian $(k-1)$-powers in the infinite word and $k$-BP's in the corresponding ball-box distribution. 

\begin{proposition}\label{bp-word}
Let $k \ge 2$ be a positive integer.
    For a ball-box distribution $\mathcal{D}$, the corresponding word $w_\mathcal{D}$ via the given set of rules is abelian $(k-1)$-power free if and only if there is no $k$-BP with respect to $\mathcal{D}$.
\end{proposition}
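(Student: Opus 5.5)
The plan is to set up a dictionary between factors of $w_\mathcal{D}$ and pairs of balls, and then observe that a BP and an abelian power are the same object read in two ways. Write $w_\mathcal{D}=c_1c_2c_3\cdots$, where $c_i\in\{a,b\}$ records whether balls $i$ and $i+1$ lie in different boxes or in the same box. For integers $1\le p<q$, the factor $c_p c_{p+1}\cdots c_{q-1}$ has length $q-p$. Its number of $a$'s is exactly $\mathcal{D}(q)-\mathcal{D}(p)$, since the box label increases by one precisely at each $a$ and stays constant at each $b$ (condition (2) together with surjectivity, so no box is skipped). Consequently, the Parikh vector of $c_p\cdots c_{q-1}$ is
\begin{align*}
\bigl(\mathcal{D}(q)-\mathcal{D}(p),\ (q-p)-(\mathcal{D}(q)-\mathcal{D}(p))\bigr).
\end{align*}
This is the key lemma; it is a one-line induction on $q-p$.

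For the direction ``a $k$-BP gives an abelian $(k-1)$-power'', let $h_1,\ldots,h_k$ be a $k$-BP with common difference $d$. Since reversing the progression again gives a BP, I may assume $d>0$. Set $v_j=c_{h_j}\cdots c_{h_{j+1}-1}$ for $1\le j\le k-1$. These are consecutive, nonempty factors of $w_\mathcal{D}$, each of length $d$. By the lemma, the number of $a$'s in $v_j$ is $b_{j+1}-b_j$, which is the same common difference of the box progression for every $j$. Hence $\psi(v_1)=\cdots=\psi(v_{k-1})$, so $v_1\cdots v_{k-1}$ is a nonempty abelian $(k-1)$-power and $w_\mathcal{D}$ is not abelian $(k-1)$-power free.

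For the converse, suppose $w_\mathcal{D}=xv_1\cdots v_{k-1}y$ with all $\psi(v_j)$ equal and the $v_j$ nonempty. Let $d=|v_1|>0$ and $h_1=|x|+1$, and put $h_j=h_1+(j-1)d$. Then $v_j=c_{h_j}\cdots c_{h_{j+1}-1}$, and the lemma gives $\mathcal{D}(h_{j+1})-\mathcal{D}(h_j)=|v_j|_a$, a constant. So $b_j=\mathcal{D}(h_j)$ is an arithmetic progression, and $h_1,\ldots,h_k$ is a non-trivial AP, i.e.\ a $k$-BP.

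I expect no real obstacle. The only care needed is the indexing (factor positions $p,\ldots,q-1$ against balls $p,q$), the reduction to $d>0$, and noting that the box progression is allowed to be trivial, which corresponds to the $v_j$ consisting only of $b$'s. The hypothesis $k\ge2$ just ensures that a $(k-1)$-power has at least one block.
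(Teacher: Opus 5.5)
Your proof is correct and follows the same route as the paper: in both directions the $k-1$ consecutive blocks of length $d$ starting at position $h_1$ are matched with the balls $h_1,\dots,h_k$, and the number of $a$'s in each block is the common box increment. Your explicit lemma $|c_p\cdots c_{q-1}|_a=\mathcal{D}(q)-\mathcal{D}(p)$ (no box is skipped, by monotonicity and surjectivity) and your reduction to $d>0$ spell out two points that the paper's proof uses only implicitly.
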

\begin{proof}
    Assume that there are $k-1$ consecutive blocks in $w_ \mathcal{D}$ that are permutations of each other. Let $d$ be the block size and $i$ be the coordinate of the first letter in the first aforementioned block. Let $A$ and $B$ be the number of $a$'s and the number of $b$'s in a block, respectively. Thus, $d=A+B$.
    Consider the following balls given by $\mathcal{D}$: $$i, i+d, \ldots, i+(k-1)d.$$
    These balls form a $k$-AP. Let $ i$ be in the $ j$-th box. Then, the $(i+ d)$-th ball is in the $(j + A)$-th box. Similarly, the $(i+ md)$-th ball is in the $ (j+ mA)$-th box for each $1\leq m \leq k-1$. This yields a $k$-BP.
    
Conversely, assume that there is a $k$-BP given by $\mathcal{D}$ containing the balls $$i, i +d,\ldots , i +(k-1)d .$$
Let these balls lie in the boxes $ j, j+A , \ldots, j+ (k-1)A$, respectively. Put $B=d-A$. In $ w_\mathcal{D} $, consider the $k-1$ consecutive blocks of size $d=A+B$ starting from the $i$-th index. Since $i+md $ is in $j+ mA$ for $1\leq m \leq k-1$, there are $A$ many $a$'s and $B$ many $b$'s in the block whose coordinates lie between $i$ and $i+d-1$. This is the case for all aforementioned $k-1$ blocks.
The above discussion indicates that $k$-BP's given by $\mathcal{D}$ and abelian $(k-1)$-powers in the infinite word $w_\mathcal{D}$ are in one-to-one correspondence.  
\end{proof}

\subsection{\texorpdfstring{Deciding Whether a Morphism is Abelian $n$-Power Free}{Deciding Whether a Morphism is Abelian n-Power Free}}
In the literature, abelian power-free words are commonly produced by iterating abelian $n$-power free morphisms for some $n \geq 2$.  Thus, an effective procedure to decide whether a given morphism is abelian $ n $-power free 
is very useful to produce words avoiding abelian powers.\\

Dekking \cite{dekking} gave sufficient conditions for a morphism to be abelian $ n $-power free. One of the weak points of this result is that it does not work efficiently for abelian square-free morphisms and the condition $(D4)$ below is a tight one. Now, we state Dekking's elegant result.

\begin{theorem}\cite[Lemma]{dekking} \label{dekkinglemma}
	Let $n>1$, $h:\Sigma^*\to \Sigma^* $ be a morphism. Let $G$ be a finite abelian group and $f:\Sigma^*\to G$ be a monoid morphism such that:
	
	\begin{enumerate}[\indent ({D}1)]
		\item The frequency matrix of $h $ is non-singular,
		\item For each $\sigma_i \in \Sigma $, we have $f(h (\sigma_i) )=0$,
		\item The set $ A= f (\operatorname{Pref} (h) )= \{g\in G: g=f(v), v\text{ is a prefix of $h(\sigma_i)$ for some $\sigma_i \in \Sigma$} \} $ does not contain a non-trivial arithmetic progression of length $n+1$, in other words $\operatorname{a-rk(A)} \leq n$,
		\item The morphism $f$ is $h$-injective. That is to say, for any positive integer $r$, if $$ f( v_1) =\cdots =f(v_r)$$ with $v_i \in \operatorname{Pref}(h)\setminus h(\Sigma)$ for $i \in \{1,\ldots,r\}$, then either $ v_1 =\cdots=v_r$ or $v_1^\prime=\cdots= v_r^\prime$ where $ v_i v_i^\prime = h(\sigma_{j_{i}})$ for some $\sigma_{j_{i}} \in \Sigma$.
	\end{enumerate}
Then, the morphism $h$ is an abelian $n$-power free morphism.
\end{theorem}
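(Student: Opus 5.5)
Suppose, for a contradiction, that $w$ is abelian $n$-power free but $h(w)$ contains an abelian $n$-power $x_1x_2\cdots x_n$ with $\psi(x_1)=\cdots=\psi(x_n)\neq 0$. We may shrink $w$ to a minimal factor $\sigma_{i_1}\cdots\sigma_{i_N}$ whose image contains this power. Then we cut the power along the block structure of $h(w)$. The $n+1$ positions where the $x_j$ begin and end fall inside images of letters. So we write $h(w)=p_0\,\cdots$ with cut points $c_0<c_1<\cdots<c_n$, and let $v_j\in\operatorname{Pref}(h)$ be the part of the block containing $c_j$ that lies to the left of $c_j$. Set $v_j'$ to be the complementary suffix, so that $v_jv_j'=h(\sigma_{t_j})$. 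Then each $x_j$ has the form $v_{j-1}'\,h(y_j)\,v_j$ for a word $y_j$ over $\Sigma$ (with the obvious modification if both ends lie in the same block).

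\textbf{Step 1 (use $G$).} Since $f$ is a monoid morphism into an abelian group, it factors through the Parikh vector: $f(u)=\psi(u)\cdot(f(\sigma_1),\dots,f(\sigma_m))$. Hence $f(x_1)=\cdots=f(x_n)=:d$. By (D2), $f(h(y_j))=0$ and $f(v_j')=-f(v_j)$. This gives $d=f(v_j)-f(v_{j-1})$, so $f(v_0),f(v_1),\dots,f(v_n)$ is an arithmetic progression of length $n+1$ in $A=f(\operatorname{Pref}(h))$. By (D3) it must be trivial, i.e.\ $f(v_0)=\cdots=f(v_n)$.

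\textbf{Step 2 (use $h$-injectivity).} Let $J$ be the set of $j$ with $v_j\notin h(\Sigma)$, i.e.\ with $v_j$ a proper nonempty prefix; we may normalise so that cuts at block boundaries have $v_j=\epsilon$. If some $v_j$ is empty while others are not, we need to handle this. Since $f(\epsilon)=0$ and $\epsilon$ is excluded from the hypothesis of (D4), some care is needed, and I would simply treat $\epsilon$ as the case $v=\epsilon$, $v'=h(\sigma)$, in which the conclusion of (D4) is automatic if all cut points lie on boundaries. Applying (D4) to $\{v_j\}$ yields either all $v_j$ equal or all $v_j'$ equal. In the first case we move every cut point left by $|v|$; in the second, right by $|v'|$. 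Either shift preserves the equality of Parikh vectors of consecutive pieces, since it removes a common prefix and appends a common suffix, or the mirror operation. Thus we obtain a new abelian $n$-power in $h(w)$, or in $h$ of a slightly extended word $w$, whose cut points lie exactly at boundaries of the blocks $h(\sigma_{i})$. The main obstacle is this step: making the shift rigorous when some cuts are already on boundaries, and ensuring that the shifted power still lives in the image of an abelian $n$-power free word. For the latter, the extra letters $\sigma_{t_j}$ are already letters of $w$, so shifting into them stays inside $h(w)$.

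\textbf{Step 3 (use (D1)).} Now $x_j=h(y_j)$ with $y_1\cdots y_n$ a factor of $w$, so $\psi(x_j)=\psi(y_j)M$, where $M$ is the frequency matrix. Since $M$ is non-singular, $\psi(y_1)=\cdots=\psi(y_n)$, and these are nonzero because the $x_j$ are nonempty. Hence $w$ contains an abelian $n$-power, a contradiction.
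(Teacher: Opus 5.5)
The paper gives no proof of this statement; it is quoted from Dekking. Your architecture is Dekking's argument, and it is the same mechanism the paper runs in its sieve proofs (Proposition~\ref{SievingT8}, Theorem~\ref{T14-template}). The values $f(v_j)$ form an $(n+1)$-term progression in $A$, (D3) makes it constant, (D4) slides every cut to a block boundary, and (D1) pulls the power back to $w$. Steps 1 and 3 are fine.

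\textbf{The gap.} The gap is the one you flag yourself in Step 2: the mixed configuration, where some cuts lie on block boundaries ($v_j=\epsilon$) and others lie strictly inside blocks. There every $f(v_j)$ equals $f(\epsilon)=0$. Shifting only the interior cuts changes the Parikh vectors of the two pieces adjacent to any unshifted boundary cut, so the power is destroyed. Your remark that (D4) is ``automatic if all cut points lie on boundaries'' does not reach this case.

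\textbf{The missing idea.} It is blocked by a misreading: $\epsilon$ is \emph{not} excluded from (D4). It lies in $\operatorname{Pref}(h)\setminus h(\Sigma)$, since by (D1) no letter is erased. The paper uses exactly this in the proof of Proposition~\ref{nonexample for 2 consecutive a}. Let $u$ be a nonempty proper prefix of $h(\sigma)$, and apply (D4) to $u$ and $\epsilon$, both viewed as prefixes of $h(\sigma)$. Since $u\neq\epsilon$ and $u'\neq h(\sigma)$, you get $f(u)\neq 0$.

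With this, the common value $g$ of the constant progression gives a clean dichotomy.
\begin{itemize}
\item If $g=0$, no cut can be interior. All cuts are already on boundaries, and you go straight to Step 3.
\item If $g\neq 0$, no cut is on a boundary. So $v_0,\dots,v_n$ are all nonempty proper prefixes, and (D4) applies to the whole family: $v_0=\cdots=v_n$ or $v_0'=\cdots=v_n'$.
\end{itemize}
In the case $g\neq 0$, either alternative also forbids two consecutive cuts in the same block, since those would give prefixes of different lengths whose complements also have different lengths. So your ``obvious modification'' is never needed after Step 1. The uniformly shifted cuts remain strictly increasing block boundaries inside $h(w)$. Each new piece has Parikh vector $\psi(x_j)+\psi(v_{j-1})-\psi(v_j)=\psi(x_j)$ for the left shift, or $\psi(x_j)-\psi(v'_{j-1})+\psi(v'_j)=\psi(x_j)$ for the right shift. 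The dichotomy also disposes of the awkward case $c_n=|h(w)|$, where your normalisation $v_n=\epsilon$ has no following block to shift into: that case only occurs when $g=0$, where no shift is needed.
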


In the same paper \cite[Theorem 1]{dekking}, Dekking also proved that
the morphism $h:\Sigma^*\to \Sigma^* $ is abelian 4-power free, where $\Sigma=\{a,b\}$ is a binary alphabet and $h$ is given by
\begin{align} \label{dekm}
	h(a) = abb \ \  \text{and} \ \ h(b) = aaab.
\end{align}
From this, one immediately concludes that the infinite word generated by $a$ under $h$ is  abelian 4-power free. \\

In 1993, Carpi \cite{Carpi93} constituted an effective and sufficient set of conditions for abelian $ n $-power freeness of a morphism (see also \cite{Rao15} for a generalization). This set of conditions is weaker than Dekking's result (Theorem \ref{dekkinglemma}), in other words, if a morphism satisfies $(D1)-(D4)$ from Theorem \ref{dekkinglemma}, then the morphism also satisfies $(C1)-(C3)$ from Theorem \ref{C93} below. 
Hence, Carpi's result extends Dekking's result.
Carpi's theorem has two strong aspects. First, it can decide more effectively whether a given morphism is abelian square-free, and which is not the case for Dekking's result. For another limitation of Dekking's result, see Proposition \ref{nonexample for 2 consecutive a}. Carpi's result even allows us to verify that the morphism of Ker\"{a}nen \cite{Keranen} is abelian square-free.
Second, this set of conditions turns out to be a complete characterization of abelian $ n $-power free morphisms defined on alphabets with six or more letters. It is left as a conjecture that Carpi's set of abelian power-free morphism conditions \cite{Carpi93} is necessary and sufficient for morphisms to be abelian power-free for alphabets of size less than six.  Now, we state Carpi's remarkable result.

\begin{theorem} \label{C93}\cite[Proposition 1]{Carpi93} \label{carpi}
	Given an integer $n \geq 2$, two alphabets $\Sigma_1$ and $\Sigma_2$ and a morphism $h: \Sigma^*_1 \rightarrow \Sigma^*_2$, the subgroup of $\mathbb{Z}^{\Sigma_2}$ generated by $\psi(h(\Sigma_1))$ is denoted by $G_h$. Suppose the following conditions are satisfied:
\begin{enumerate}[\indent ({C}1)]
	
		\item  For any abelian $n$-power free word $z$ of length 2,  the word 
 $h(z)$ is abelian $n$-power free.
		
		\item $h$ is commutatively bijective, that is, for any $u,v$ from $\Sigma^*_1 $, if $\psi(h(u))=\psi(h(v))$, then
  $\psi(u)=\psi(v).$
		\item For every $(n+1)$-tuple 
  $ \left( x_0,x_1,\dots,x_n\right) $ in $$ \prod_{j=0}^{n}\bigg( \operatorname{Pref}\left(h\left(a_j\right)\right) \setminus \{h(a_j)\}\bigg),$$
  where $ a_j \in \Sigma_1 $ and
		$$
		\begin{gathered}
		\psi\left(x_{j+1}\right)-2 \psi\left(x_j\right)+\psi\left(x_{j-1}\right) \in G_h \\
		\ \text{for} \ j=1,2, \ldots, n-1,
		\end{gathered}
		$$
		there exists an $(n+1)$-tuple $ \left( \delta_0,\delta_1,\dots,\delta_n\right)  $ in $ \left\lbrace 0,1\right\rbrace^{n+1} $ such that
		$$
		\begin{aligned}
		\psi\left(x_{j+1}\right)-2 \psi\left(x_j\right)+\psi\left(x_{j-1}\right) & =\delta_{j+1} \psi\left(h\left(a_{j+1}\right)\right)-2 \delta_j \psi\left(h\left(a_j\right)\right)+\delta_{j-1} \psi\left(h\left(a_{j-1}\right)\right) \\
		\ \text{for} \ j & =1,2, \ldots, n-1 .
		\end{aligned}
		$$
	\end{enumerate}
	Then, the morphism $h$ is an abelian $n$-power free morphism.
\end{theorem}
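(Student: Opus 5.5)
Suppose, for contradiction, that $w$ is an abelian $n$-power free word over $\Sigma_1$ while $h(w)$ contains an abelian $n$-power. Write it as $U_1U_2\cdots U_n$ with $\psi(U_1)=\cdots=\psi(U_n)\neq 0$. Among all counterexamples, choose one with $|w|$ minimal; in particular the abelian power in $h(w)$ touches the images of both the first and last letters of $w$. The first step is to handle short $w$. If $w$ has length at most $2$, condition (C1) rules it out directly. So I will assume $|w|\ge 3$. Moreover, if the whole power fits inside the image of at most two consecutive letters, then it sits inside $h(z)$ for a factor $z$ of length $2$ of $w$. Such a $z$ is itself abelian $n$-power free, so (C1) again gives a contradiction. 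Hence each block boundary structure must be examined genuinely.

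Next I parametrize the cut points. Let $p_0<p_1<\cdots<p_n$ be the positions in $h(w)$ where the blocks $U_j$ begin and end. For each $j$, position $p_j$ falls inside the image of some letter $a_j$ of $w$. So $h(w)=h(y_j)\,x_j\,x_j'\,h(y_j')$ with $x_jx_j'=h(a_j)$ and $x_j\in\operatorname{Pref}(h(a_j))\setminus\{h(a_j)\}$. Define $w_j$ to be the factor of $w$ strictly between the letters $a_{j-1}$ and $a_j$, after a suitable convention when two cuts lie in the same letter. Then
\[
\psi(U_j)=\psi(h(w_j))+\psi(h(a_{j-1}))-\psi(x_{j-1})+\psi(x_j)
\]
holds, up to the bookkeeping convention. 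Equating $\psi(U_{j+1})=\psi(U_j)$ gives
\[
\psi(x_{j+1})-2\psi(x_j)+\psi(x_{j-1})\in G_h \quad\text{for } j=1,\dots,n-1,
\]
because all the other terms are images of letters. Condition (C3) then supplies $\delta_j\in\{0,1\}$ with the second differences of $\psi(x_j)$ equal to those of $\delta_j\psi(h(a_j))$.

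Now I substitute this back and use (C2). The defect $\psi(x_j)-\delta_j\psi(h(a_j))$ has zero second difference, so it is an affine function of $j$. This lets me redistribute the letters: set $v_j = w_j$ with $a_{j-1}$ appended when $\delta_{j-1}=0$ and with $a_j$ appended when $\delta_j=1$. Concretely, the cut is moved to the left of $a_j$ when $\delta_j=0$ and to its right when $\delta_j=1$. This produces consecutive factors $v_1,\dots,v_n$ of $w$ whose images $h(v_j)$ have Parikh vectors that are equal up to the affine correction. A careful check should show the correction is constant, hence cancels, so $\psi(h(v_j))=\psi(h(v_{j+1}))$ for all $j$. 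Commutative bijectivity (C2) then gives $\psi(v_1)=\cdots=\psi(v_n)$, so $w$ contains an abelian $n$-power $v_1\cdots v_n$.

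The main obstacle is ensuring the new factors are nonempty and genuinely consecutive. I must rule out that the rounding collapses everything to the empty word. For this I will use $\psi(U_j)\neq 0$ together with nonsingularity implied by (C2), which makes $h$ nonerasing on nonempty Parikh classes. If all $v_j$ were empty, then every $U_j$ would lie within a single letter image, and this degenerate case was already eliminated via (C1). The consecutiveness convention when several cuts fall in one letter also needs care, and I would treat it by induction on the number of coincident cuts.
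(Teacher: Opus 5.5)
The paper does not prove this statement (it is quoted from Carpi), so I judge your argument on its own. Its skeleton is the standard one, and the step you hedge on is immediate. With $e_j=\psi(x_j)-\delta_j\psi(h(a_j))$ you have $\psi(h(v_j))=\psi(U_j)+e_{j-1}-e_j$, and (C3) says exactly that $c:=e_j-e_{j-1}$ does not depend on $j$.

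The genuine gap is the part you postpone. Suppose two consecutive cuts $p_{j-1},p_j$ lie in the same occurrence of a letter $a$. This can happen in powers spanning many letters whenever the common block length is below $|h(a)|$, so your (C1) reduction does not remove it. Then there is no factor $w_j$. The bookkeeping only balances if $w_j$ is given Parikh vector $-\psi(a)$, and then $v_j$ has Parikh vector $(\delta_j-\delta_{j-1})\psi(a)$, which is negative when $\delta_{j-1}=1$, $\delta_j=0$. An unspecified \emph{induction on the number of coincident cuts} does not deal with this.

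Your nonemptiness argument also fails. Since $\psi(h(v_j))=\psi(U_j)-c$ and nothing prevents $c=\psi(U_j)$, the facts $\psi(U_j)\neq 0$ and nonerasure give no information. Moreover, the degenerate outcome is that the power lies in the image of two consecutive letters, not of a single one.

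The missing idea is to track rounded cut positions rather than words. Let $i_j$ be the number of letters of $w$ preceding the occurrence whose image contains $p_j$, put $q_j=i_j+\delta_j$, and let $P(q)$ be the Parikh vector of the prefix of length $q$ of $w$. The prefix of $h(w)$ of length $p_j$ has Parikh vector $L(P(q_j))+e_j$, where $L$ is the linear map given by the frequency matrix. Hence $L(P(q_j)-P(q_{j-1}))=\psi(U_1)-c$ for all $j$. Since (C2) makes $L$ injective on $\mathbb{Z}^{\Sigma_1}$ (every integer vector is a difference of two Parikh vectors), $P(q_j)-P(q_{j-1})=V$ is independent of $j$. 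Summing coordinates gives $q_j=q_0+js$, and there are three cases.
If $s>0$, the factors of $w$ between consecutive $q_j$ form an abelian $n$-power.
If $s<0$, we get a contradiction, since $q_n\ge i_n\ge i_0\ge q_0-1$ while $q_n\le q_0-n$.
If $s=0$, then $i_n\le q_0\le i_0+1$, so the power lies in $h(z)$ for a factor $z$ of $w$ of length at most $2$, and (C1) applies.
Coincident cuts need no separate treatment.

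Finally, you overlooked that the last cut may be the end of $h(w)$; minimality does not exclude this, and there no proper prefix $x_n$ exists. Appending a virtual letter $d$ with $x_n=\epsilon$ works only if $\delta_n=0$. This can be arranged when $|\Sigma_1|\ge 2$. Suppose two letters $d\neq d'$ both admitted solutions of (C3) with $\delta_n=1$. Subtracting the equations for $j=n-1$ and using injectivity of $L$ gives $\psi(d)-\psi(d')=2k\,\psi(a_{n-1})-l\,\psi(a_{n-2})$ with $k,l\in\{-1,0,1\}$. Comparing coordinate sums forces $k=l=0$, a contradiction.
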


\subsection{\texorpdfstring{Deciding Whether a Fixed Point is Abelian $k$-Power Free}{Deciding Whether a Fixed Point is Abelian k-Power Free}}
Up to this point, we focused on the results on the abelian power-freeness of morphisms. In this subsection, our focus shifts to deciding abelian power-freeness of the fixed points of a morphism. Given a morphism $h: \Sigma^* \rightarrow \Sigma^*$ with an infinite fixed point $ w $ and an integer $ k \ge 2, $ decide if $ w $ is abelian $ k $-power free. In 2012, Currie and Rampersad \cite{C-R2012} showed that for a morphism $ h $ satisfying certain  general conditions, this problem is decidable.  This novel approach is known as the template method. For a generalization, we direct the reader to \cite{Rao-R}. The template method leverages the regular structure of morphisms, providing a more theoretically efficient and comprehensive framework for analysis compared to existing methods. This method requires the use of computer programs for its implementation. However, when $k$ is large, its computational complexity might be a drawback in deciding whether a fixed point is abelian $k$-power free.

\begin{definition}
Let $ A \in \mathbb{R}^{m \times m} $ be a matrix. The operator norm (or induced norm) of $ A $ is defined as	$$ \|A\| = \sup_{\substack{x \in \mathbb{R}^m \\ x \neq 0}} \frac{\|Ax\|_2}{\|x\|_2}, \  \ \text{where}  \ \|\cdot\|_2 \  \text{denotes the Euclidean norm on }\mathbb{R}^m.$$ 
\end{definition}
Now, we state the template method.
\begin{theorem}\cite[Theorem 6]{C-R2012} \label{templatem}
	Let $h$ be a morphism on $\Sigma=\left\lbrace \sigma_1,\sigma_2,\dots,\sigma_m\right\rbrace $, let $ M $ be the frequency matrix of $ h $ and let $k \ge 2$ be an integer. Suppose that
	
	\begin{enumerate}[\indent ({T}1)]
		\item $ h(\sigma_1)=\sigma_1x, $ for some non-empty word $ x $ in $ \Sigma^*, $
		\item $|h(\sigma_i)| > 1$ for each $ i \in \left\lbrace 1,\dots,m\right\rbrace,  $
		\item $M$ is non-singular and
  the operator norm of the inverse of  $M$ is less than 1, that is to say, $\|M^{-1}\| < 1$.
	\end{enumerate}
Let $w$ be the infinite word generated by $\sigma_1$ under $h$, namely $w=h^{\omega}(\sigma_1)$. Then, it is decidable whether $ w $ is abelian $ k $-power free.
\end{theorem}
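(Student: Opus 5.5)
The plan is to reduce the question to a finite computation by the template method of Currie and Rampersad. A \emph{template} records the abstract shape of an abelian $k$-power in $w$: a $(k+1)$-tuple of boundary letters together with $k-1$ difference vectors $\psi(v_{j+1})-\psi(v_j)\in\Z^m$ (all zero for an actual abelian power). An abelian $k$-power in $w=h(w)$ is an instance of the template $t_0=[a_0,\dots,a_k;0,\dots,0]$ with arbitrary boundary letters.

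The key step is \emph{desubstitution}. Given an instance $x v_1\cdots v_k y$ of a template $t$ in $w$ with the blocks $v_j$ long, write $w=h(w)$ and cut each $v_j$ along the boundaries of the images $h(\sigma)$. Using (T1) and (T2), each $v_j$ becomes $p_j\,h(u_j)\,s_j$ with $p_j\in\operatorname{Suff}(h)$ and $s_j\in\operatorname{Pref}(h)$, or a small variant when a block lies inside one image. Then $\psi(h(u_{j+1}))-\psi(h(u_j))=M^{T}(\psi(u_{j+1})-\psi(u_j))$ equals the old difference vector corrected by bounded prefix and suffix contributions. Since $M$ is non-singular by (T3), this determines a new difference vector $\psi(u_{j+1})-\psi(u_j)=M^{-T}(\cdots)$, which must be integral. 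The $u_j$ then give an instance in $w$ of a \emph{parent} template $t'$, and each template has only finitely many parents, computable from $\operatorname{Pref}(h)$ and $\operatorname{Suff}(h)$.

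Next comes the norm estimate. Because $\|M^{-1}\|<1$ (and the transpose has the same operator norm), iterating the parent map gives
\begin{align*}
\|d'\|\le \|M^{-1}\|\bigl(\|d\|+C\bigr),
\end{align*}
where $C$ depends only on $h$. Hence every ancestor has difference vectors of norm at most $C\|M^{-1}\|/(1-\|M^{-1}\|)+\|d_0\|$. This leaves only finitely many ancestors of $t_0$, and they can be enumerated by a terminating breadth-first search.

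Finally we conclude, and this is also the main obstacle. If $w$ contains an instance of $t_0$, then desubstituting repeatedly strictly shrinks the block lengths until some ancestor has an instance with short blocks, of length below an explicit bound $L$ obtained from $|h(\sigma_i)|$ and the norm bound. Conversely, any occurrence of an ancestor template in $w$ with short blocks pushes forward through $h$ to an instance of $t_0$. So $w$ is abelian $k$-power free if and only if no ancestor template has an instance with blocks shorter than $L$ in a prefix of $w$ whose length is computable (by uniform recurrence when $h$ is primitive, or else by checking the relevant factors of $w$ directly). The delicate step is bounding the block length from below so that desubstitution is always well defined and genuinely reduces lengths. This needs (T2), so that each block strictly shrinks, and careful treatment of blocks shorter than $\max|h(\sigma_i)|$. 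I would handle these short-block cases by brute-force inspection of factors of $w$ of bounded length.
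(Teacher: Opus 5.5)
Your proposal is correct and follows essentially the same route as the Currie--Rampersad argument behind this theorem, which the paper cites without reproving and summarizes in Section~\ref{Sieve Section}: desubstitute instances to parents, use the contraction $\|M^{-1}\|<1$ to bound the difference vectors so that there are only finitely many ancestors, shorten every long instance by an inverse-parent-lemma estimate (Lemma~\ref{new search bound} is the paper's sharpened version of it), and finish with a finite check of factors of bounded length. One bookkeeping point: in the paper's convention the template letters sit between the blocks, so the starting template must be $T_k=[\epsilon,\dots,\epsilon,\vec 0,\dots,\vec 0]$, and your ``arbitrary boundary letters'' for $t_0$ are correct only if you read them as the first letters of the blocks themselves and set up the parent step in that same convention.
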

\subsection{\texorpdfstring{A New Set of Conditions to Decide Whether a Morphism is Abelian $n$-Power Free}{A New Set of Conditions to Decide Whether a Morphism is Abelian n-Power Free}}
Next, we provide a set of effective and sufficient conditions for a morphism to preserve abelian $ k $-power free words. Building on foundational approaches, our set of conditions not only provides a more efficient enhancement to Dekking's result over a binary alphabet, but also addresses certain limitations.
Although Carpi's result is more general, checking the condition $(C3)$ may take longer. Similarly, the template method provides a super-powerful tool for fixed points but it often requires extensive computational resources. 
Our set of conditions stands out for being both theoretically well-founded and practical, offering an accessible approach for deciding abelian power-free morphisms.

\begin{theorem}\label{our result}
	Given an integer $n \geq 4$, an alphabet $\Sigma$ of size at least 2 and a morphism $h: \Sigma^* \rightarrow \Sigma^*$, suppose the subgroup of $\mathbb{Z}^{\Sigma}$ generated by $\psi(h(\Sigma))$ is denoted by $G_h$.
  For any element $e \in \mathbb{Z}^{\Sigma}$, the element $\overline{e} \in \mathbb{Z}^{\Sigma} / G_h$
 represents the equivalence class of $e$ with respect to $G_h$, namely $\overline{e}=\{d \in \mathbb{Z}^{\Sigma}: e-d \in G_h \}$.
 Let  $\mathcal{P} = \psi (\operatorname{Pref}{(h)})$
 and 	$ \overline{\mathcal{P}} = \{\overline{v}: v\in \mathcal{P} \}$.  Suppose that the following conditions are satisfied:
	\begin{enumerate}[\indent ({O}1)]
		\item The frequency matrix of $ h $ is non-singular,
		\item $\operatorname{a-rk} (\overline{\mathcal{P}}) \leq n$,
		\item For $v\in \mathcal{P} \setminus (\psi(h(\Sigma)) \cup \{0\})$, the
        equivalence class $[v]_\mathcal{P}= \{ w \in \mathcal{P}: w-v \in G_h
        \}$ of $v$ in $\mathcal{P}$ contains at most $2$ elements,
        and the Parikh vector of a non-empty proper prefix of $h$ is not in
        the equivalence class of $0$, where $0$ is the zero vector in
        $\mathbb{Z}^{\Sigma}$. Moreover, if $[v]_\mathcal{P}=\{v,w\}$
        with $v \neq w$, then
        $$ v-w=\psi(h(p))-\psi(h(q)) $$
        for unique $p,q \in \Sigma$ where $v \in \psi(\operatorname{Pref}(h(p)))$ and
        $w \in \psi(\operatorname{Pref}(h(q)))$.
	\end{enumerate}
	Then, $h$ is an abelian $n$-power free morphism.
	\end{theorem}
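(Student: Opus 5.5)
The plan is to argue by contradiction, in the spirit of Dekking's proof of Theorem \ref{dekkinglemma}. Let $w$ be an abelian $n$-power free word such that $h(w)$ contains an abelian $n$-power $u_1u_2\cdots u_n$ with $\psi(u_1)=\cdots=\psi(u_n)\neq 0$. Take $w$ of minimal length. Then the first block $u_1$ begins inside the image of the first letter of $w$, and $u_n$ ends inside the image of the last letter.

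Each of the $n+1$ cut points of the power lies inside (or at the start of) the image $h(a_j)$ of some letter $a_j$ of $w$. Write $h(a_j)=x_jy_j$, where $x_j$ is the part of $h(a_j)$ before the cut, so $x_j$ is a proper prefix (possibly empty). We then obtain a factorisation
$$ u_i = y_{i-1}\,h(w_i)\,x_i,\qquad 1\le i\le n, $$
and therefore
$$ \psi(u_i)=\psi(h(a_{i-1}w_i))-\psi(x_{i-1})+\psi(x_i). $$
Reducing modulo $G_h$, the first term disappears. Hence $\overline{\psi(x_i)}-\overline{\psi(x_{i-1})}$ is independent of $i$, and $\overline{\psi(x_0)},\dots,\overline{\psi(x_n)}$ is an arithmetic progression of length $n+1$ in $\overline{\mathcal P}$. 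By (O2) this progression is trivial, so all $\psi(x_i)$ lie in a single class $[v]_{\mathcal P}$.

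We then split into cases according to (O3).

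\emph{Zero class.} If the common class is that of $0$, then (O3) forces every $x_i$ to be empty. In that case $\psi(h(a_{i-1}w_i))$ is independent of $i$. Since $h$ is linear on Parikh vectors with matrix $M$, and $M$ is non-singular by (O1), the words $a_{i-1}w_i$ are pairwise abelian equivalent. They are consecutive in $w$ and non-empty, so $w$ contains an abelian $n$-power, a contradiction.

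\emph{Singleton class.} If the class is $\{v\}$, then all $\psi(x_i)=v$. The same computation shows that $\psi(h(a_{i-1}w_i))$ is constant, and we conclude as in the zero class.

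\emph{Two-element class.} If the class is $\{v,v'\}$, then (O3) gives unique letters $p,q$ with $v-v'=\psi(h(p))-\psi(h(q))$. Moreover, the letter $a_i$ is determined by which of $v,v'$ equals $\psi(x_i)$, apart from the degenerate situation where both could come from the same letter, which the uniqueness clause excludes. Write $\psi(x_i)=v'+\delta_i(\psi(h(p))-\psi(h(q)))$ with $\delta_i\in\{0,1\}$. Substituting, we get that
$$ \psi(h(a_{i-1}w_i))+(\delta_i-\delta_{i-1})\bigl(\psi(h(p))-\psi(h(q))\bigr) $$
is constant in $i$. Pulling back through $M^{-1}$ then shows that
$$ \psi(a_{i-1}w_i)+(\delta_i-\delta_{i-1})(e_p-e_q) $$
is constant, where $e_\sigma$ denotes the unit vector of $\sigma$.

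The remaining task in this last case is to reinterpret that identity as an abelian $n$-power in $w$ itself. The idea is to shift each cut point of $w$ by one letter depending on $\delta_i$: when $\delta_i=1$ (so $a_i=p$), place the $i$-th boundary after $a_i$ rather than before it. The aim is that, after this shift, the new blocks of $w$ have Parikh vectors $\psi(a_{i-1}w_i)+\delta_ie_{a_i}-\delta_{i-1}e_{a_{i-1}}$, and these are forced to be equal. This would again produce an abelian $n$-power in $w$, contradicting the choice of $w$.

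I expect this bookkeeping to be the main obstacle. The shift has to be set up so that the $e_q$ contributions match the $e_p$ contributions, which requires the uniqueness clause of (O3) to pin down $a_i$ from $\delta_i$. One must also check that the shifted blocks stay non-empty and consecutive, including at the two ends of the power. The endpoint cases, where $\delta_0$ or $\delta_n$ shifts a block outside $w$, are where I would expect the hypothesis $n\ge 4$ to be used: a bounded number of letters can be discarded at the ends while the blocks remain long enough and the equal-Parikh structure is preserved. If the direct shift fails, the fallback is to compare consecutive differences $\psi(u_{i+1})-\psi(u_i)=0$ and use them to derive that the $\delta_i$ are eventually constant, which reduces this case to the singleton one.
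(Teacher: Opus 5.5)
Your direct, Dekking-style argument is a genuinely different route from the paper's. The paper only checks Carpi's conditions (C1)--(C3) of Theorem~\ref{C93} and lets Carpi's theorem do the combinatorics. Your progression step and your zero and singleton cases are fine, modulo the point in the next paragraph. However, the decisive two-element case, which is the analogue of (C3), is not closed, and the shift you describe fails. Moving only the boundaries with $\delta_i=1$ gives blocks with Parikh vectors $\psi(a_{i-1}w_i)+(\delta_i-\delta_{i-1})e_p$. That is your constant $C$ plus $(\delta_i-\delta_{i-1})e_q$, which is not constant. There is also no reason for the $\delta_i$ to be eventually constant, so the fallback does not help.

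The missing observation is short. Since $a_i=p$ exactly when $\delta_i=1$ and $a_i=q$ exactly when $\delta_i=0$, we have $e_{a_i}=e_q+\delta_i(e_p-e_q)$, hence $(\delta_i-\delta_{i-1})(e_p-e_q)=e_{a_i}-e_{a_{i-1}}$. So your invariant says precisely that $\psi(w_ia_i)$ is independent of $i$: move \emph{every} boundary to just after $a_i$. Equivalently, the complementary suffixes $y_i$ (with $h(a_i)=x_iy_i$) all have Parikh vector $\psi(h(p))-v=\psi(h(q))-v'$, so $\psi(u_i)=\psi(h(w_ia_i))$. This is the same suffix trick the paper uses for the score $\overline{7}$ in Proposition~\ref{SievingT8}. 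Then $w_1a_1\cdots w_na_n$ is an abelian $n$-power in $w$ with non-empty blocks. There is no endpoint issue: in a non-zero class $x_n\neq\epsilon$, so $a_n$ is a letter of $w$.

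Second, your factorisation $u_i=y_{i-1}h(w_i)x_i$ silently assumes that consecutive cut points fall in different letter images. For short blocks, several cut points can lie in the same $h(a_j)$. In the paper this is exactly the situation handled by the (C1) verification (powers inside $h(z)$ with $|z|=2$), and that is where $n\ge 4$ is used, not at the endpoints.

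Your route can absorb this case. Write $\psi(u_i)=\psi(h(W_i))-\psi(x_{i-1})+\psi(x_i)$, where $W_i$ is the possibly empty factor of $w$ from the occurrence $a_{i-1}$ up to, but excluding, $a_i$. The progression argument is unchanged, and once the class is fixed, $a_{i-1}=a_i$ is impossible:
\begin{itemize}
\item in the zero class, all $x_i=\epsilon$;
\item in a singleton class, $x_{i-1}=x_i$ would make $u_i$ empty;
\item in a two-element class, $x_{i-1},x_i$ would be prefixes of one $h(c)$ with Parikh vectors $v\neq v'$, forcing $c=p=q$.
\end{itemize}

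Finally, the claim that $a_i$ is determined by $\psi(x_i)$ needs (O3) to be read, as you implicitly do, as follows: $p$ is the only letter with $v\in\psi(\operatorname{Pref}(h(p)))$, and $q$ the only one with $v'\in\psi(\operatorname{Pref}(h(q)))$. Uniqueness of the pair $(p,q)$ alone already follows from (O1) and would not be enough for this step. The paper's (C3) step uses (O3) in the same way. Under that reading, your repaired argument is self-contained and never uses $n\ge 4$.
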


\begin{proof}
	We will make use of Carpi's theorem (Theorem \ref{C93}). For this purpose, first let $z$ be a word of length $2$. Clearly, $z$ is an abelian $n$-power free word as $n\geq 4$. We will show that $h(z)$ is abelian $n$-power free as well. Write $z=ab$ for some $a,b \in \Sigma$.  Suppose that 
 $$ h(z) =h(a) h(b) =x v_1 \cdots v_n y$$ where  $\psi(v_i)= \psi(v_j)$ for $i,j \in \{1,\ldots, n\}$. Then, without loss of generality, we may write 
 $$h(a)= xv_1\cdots v_{i-1} v_i ^\prime \ \ \text{and}\ \  h(b)= v_i ^{\prime \prime}v_{i+1} \cdots v_ny$$ for some $i$ where $v_i =v_i ^{ \prime}v_i ^{\prime \prime} $.
We set $\alpha =\overline{\psi(x)}$ and $d=\overline{\psi(v_1)} =\cdots =\overline{\psi(v_n)} $. Note that $\alpha, \alpha +d,\ldots ,\alpha +(i-1)d \in \overline{\mathcal{P}}$. Let $d_1= \overline{\psi{(v_i ^\prime )}}$. Then $ d_2 =\overline{\psi(v_i ^{\prime\prime})}=d-d_1$. Observe that 
	$$\overline{0} = \alpha + (i-1)d+d_1=\alpha+i d-d_2 , $$
	in other words $d_2=\alpha+id $. Notice also that $$ \alpha +i d=d_2,d_2+d,\ldots, d_2+(n-i)d \in \overline{\mathcal{P}}.$$ This yields that $\alpha, \ldots, \alpha+id, \ldots, \alpha +nd $ are all in $\overline{\mathcal{P}}$. As $\operatorname{a-rk }(\overline{\mathcal{P}}) \leq n$ by $(O2)$, we deduce that $d=\overline{0}$. Therefore, $ \psi(x),\psi(xv_1),\ldots, \psi (xv_1 \ldots v_{i-1}) $ are all in $\mathcal{P}$ and in the same equivalence class given by $\Z^\Sigma/ G_h$. 
 Suppose that $i \ge 3$ and the words $x, xv_1, xv_1v_2$ are all distinct. By $(O3)$, we have that $\psi(x), \psi(xv_1), \psi(xv_1v_2)$ are in $\psi(h(\Sigma)) \cup \{0\}$.
If $x=\epsilon$, then $\psi(x)=0$
and $\psi(xv_1v_2)=2\psi(xv_1)$.
This contradicts the non-singularity of the frequency matrix of $h$ by $(O1)$ as $\psi(xv_1), \psi(xv_1v_2)$ are in $\psi(h(\Sigma))$. 
If $x \neq \epsilon$, set
$\psi(xv_1)=\psi(h(p_1))=\psi(x)+\psi(v_1)$ and
$\psi(xv_1v_2)=\psi(h(p_2))=\psi(x)+2\psi(v_1)$ for some $p_1, p_2$ from $\Sigma$. This yields that the vectors 
$\psi(x), \psi(xv_1), \psi(xv_1v_2)$ from $\psi(h(\Sigma))$ are linearly dependent. This again contradicts the non-singularity of the frequency matrix of $h$ by $(O1)$. Thus $i \le 2$ or $v_1= \cdots= v_n =\epsilon .$ If the second situation holds, then we are done.
Therefore, suppose that $i\leq 2$. By a similar argument to the above,
$$ \psi(v_i^{\prime\prime}), \psi(v_i^{\prime\prime} v_{i+1}) ,\ldots , \psi(v_i^{\prime\prime}v_{i+1} \ldots v_n ) $$ are all in $\mathcal{P}$ and are in the same equivalence class given by $\Z^\Sigma/ G_h$. 
If $n-i+1 \ge 3$ and the words 
$v_i^{\prime\prime}, v_i^{\prime\prime} v_{i+1}, v_i^{\prime\prime} v_{i+2}$ are all distinct, then as above, this case will contradict the non-singularity of the frequency matrix of $h$ by $(O1)$. As a result, we have two cases. The first case is $n-i+1 \le 2$ and the second case is 
$v_1= \cdots= v_n =\epsilon .$ 
The first case is impossible as $n \ge 4$ and $i \le 2$.
Therefore, we deduce that $$v_1= \cdots= v_n =\epsilon .$$ Hence, $ h(z)$ is abelian $n $-power free and we have $(C1)$ from Theorem \ref{C93}.\\
	
Secondly, $h$ is commutatively bijective as its frequency matrix is non-singular by $(O1)$. Thus, we obtain $(C2)$ from Theorem \ref{C93}.\\

	Lastly, we will show that $(C3)$ from Theorem \ref{C93} is satisfied to finish the proof. Let $$(x_0 ,\ldots, x_n ) \in \prod_{j=0}^{n}\bigg( \operatorname{Pref}\left(h\left(a_j\right)\right) \setminus \{h(a_j)\}\bigg),$$ 
 where $ a_j \in \Sigma.$
 Suppose that
 $$\psi( x_{j+1}) -2\psi(x_j) + \psi(x_{j-1} ) \in G_h $$
	for $j=1,\ldots ,n-1$. One infers that $$\overline{\psi (x_{j-1}) }, \ \overline{\psi (x_{j}) }, \ \overline{\psi (x_{j+1})}  $$
 is a $3$-AP in $\overline{ \mathcal{P} }$.
 Therefore, $$\overline{\psi (x_{0}) } ,\ldots,\overline{\psi (x_{n}) }$$
 is an arithmetic progression of length $n+1$ in $\overline{ \mathcal{P} }$.
 As $\operatorname{a-rk} (\overline{\mathcal{P}})\leq n$ by $(O2)$, we conclude $$\overline{\psi (x_{0}) } =\cdots=\overline{\psi (x_{n}) }.$$ This means that $\psi(x_0),\ldots,\psi(x_n) \in \mathcal{P}$ are in the same equivalence class. 
 Put $K=\{\psi(x_0),\ldots, \psi(x_n) \}.$ If the size of $K$ is at least 3, then we see that 
$$x_0= \cdots= x_n =\epsilon,$$
and we can choose $\delta_j=0 $ in $(C3)$ for $j\in \{1,\ldots,n-1\}$.
So, we may assume that $x_i \neq \epsilon$ for any $i$ from $\{0,\ldots,n\}$.
 Therefore, the size of $K=\{\psi(x_0),\ldots, \psi(x_n) \} $ is at most $2$. 
	If $|K|=1$, then one may take $\delta_j=0 $ in $(C3)$ for $j\in \{1,\ldots,n-1\}$. Suppose that $|K| =2$  and write $ K= \{v,w\}$ where $v\neq w$. By the assumption, $$ v-w =\psi(h(p))-\psi (h(q))$$ for $p,q\in \Sigma$ where $v\in \psi(\operatorname{Pref} (h(p)))  \ \text{and} \ w\in \psi(\operatorname{Pref}(h(q)))$. Now for any $j\in\{1,\ldots,n-1\}$, notice that
	$$ \psi(x_{j+1} )-2 \psi(x_{j} )+\psi(x_{j-1} )\in \{0,v-w,w-v, 2v-2w,2w-2v\}. $$
	In any case, \begin{align*}
	\psi(x_{j+1} )-2 \psi(x_{j} )+\psi(x_{j-1} )= \psi(h(a_{j+1})) -2\psi(h(a_j)) + \psi(h(a_{j-1}))
	\end{align*}
	for suitable $a_{j+1},a_{j},a_{j-1} \in\{p,q\}$ and $\delta_j=1 $ for $j\in \{1,\ldots,n-1\}$. We conclude that Carpi's last condition $(C3)$ is satisfied, and the proof is now complete.
\end{proof}

Next, we prove that our previous theorem extends Dekking's result (Theorem \ref{dekkinglemma}) over a binary alphabet.

\begin{theorem} \label{extdek}
Let $n \ge 4$, $\Sigma$ be a binary alphabet,
$G$ be a finite abelian group and $f:\Sigma^*\to G$ be a monoid morphism. Assume that there is a morphism $h:\Sigma^*\to \Sigma^*$ satisfying $(D1)-(D4)$ from Theorem \ref{dekkinglemma}. Then, the morphism also satisfies $(O1)-(O3)$ from Theorem \ref{our result}. 
\end{theorem}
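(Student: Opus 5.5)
The plan is to pass from the monoid morphism $f$ to a group homomorphism on Parikh vectors, so that all of $(O1)$--$(O3)$ become statements about $\overline{\mathcal P}\subseteq \mathbb Z^\Sigma/G_h$ that can be read off from $(D1)$--$(D4)$.

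\textbf{Step 1: factoring $f$.} Since $G$ is abelian, $f$ factors through the Parikh map. Concretely, $f=F\circ\psi$ for the unique homomorphism $F:\mathbb Z^\Sigma\to G$ with $F(e_\sigma)=f(\sigma)$. By $(D2)$, $F$ vanishes on every generator $\psi(h(\sigma))$ of $G_h$, so it descends to a homomorphism $\overline F:\mathbb Z^\Sigma/G_h\to G$. This gives $\overline F(\overline{\mathcal P})=f(\operatorname{Pref}(h))=A$. Condition $(O1)$ is literally $(D1)$.

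\textbf{Step 2: $\overline F$ is injective on $\overline{\mathcal P}$.} Write $\Sigma=\{a,b\}$ and let $v,w$ be proper non-empty prefixes of $h(a)$ or $h(b)$ with $f(v)=f(w)$. I apply $(D4)$ with $r=2$.
\begin{itemize}
\item If $v$ and $w$ are prefixes of the same $h(\sigma)$ but have different lengths, then their complements also have different lengths. Hence $(D4)$ forces $v=w$.
\item If $v$ is a prefix of $h(a)$ and $w$ a prefix of $h(b)$ with $v\neq w$, then $(D4)$ gives $h(a)=vs$ and $h(b)=ws$ for a common suffix $s$. Therefore $\psi(v)-\psi(w)=\psi(h(a))-\psi(h(b))\in G_h$, so $\overline{\psi(v)}=\overline{\psi(w)}$.
\end{itemize}
The elements $\psi(\epsilon)$ and $\psi(h(\sigma))$ all lie in the class $\overline 0$, where $\overline F$ takes the value $0$. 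So injectivity reduces to one claim: no non-empty proper prefix $v$ has $f(v)=0$. This is exactly the extra requirement in $(O3)$ as well.

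\textbf{The main obstacle} is this last claim. $(D4)$ excludes full images and $\epsilon$, so it does not directly compare $v$ with $\epsilon$. My first attempt is to use $(D1)$, $(D2)$ and the binary structure: apply $(D4)$ to the pair consisting of $v$ and the proper prefix of the other image with the same $f$-value, if one exists. If that fails, I would exploit $(D3)$ by building an arithmetic progression in $A$ through $0$ and $f(v)$. If neither works, I would check whether Dekking's standard setting (where $f$ is injective on proper prefixes together with $0$) is implicitly intended, and state that reading of $(D4)$.

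\textbf{Step 3: $(O2)$ and $(O3)$.} Assuming injectivity, a non-trivial $(n+1)$-term progression in $\overline{\mathcal P}$ maps under $\overline F$ to a non-trivial $(n+1)$-term progression in $A$. This contradicts $(D3)$, so $(O2)$ holds. For $(O3)$, all elements of one class $[v]_{\mathcal P}$ have the same $f$-value. By Step 2, at most one of them comes from prefixes of each letter, so $|[v]_{\mathcal P}|\le 2$. If $[v]_{\mathcal P}=\{v,w\}$ with $v\neq w$, Step 2 gives $v-w=\psi(h(p))-\psi(h(q))$ with $\{p,q\}=\{a,b\}$. On a binary alphabet this pair is unique, because $(D1)$ makes $\psi(h(a))-\psi(h(b))\neq 0$.
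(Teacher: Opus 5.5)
\textbf{The gap.} The gap is exactly the step you flag as the main obstacle. You never prove that no non-empty proper prefix $v$ of some $h(\sigma)$ has $f(v)=0$, and both your Step 2 (injectivity of $\overline F$ on $\overline{\mathcal P}$) and Step 3 (the bound $|[v]_{\mathcal P}|\le 2$ and the extra clause of (O3)) rest on it. Listing three possible strategies is not a proof.

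Your diagnosis that (D4) excludes full images and $\epsilon$ is a misreading. (D4) ranges over $\operatorname{Pref}(h)\setminus h(\Sigma)$, and $\epsilon$ belongs to that set. The missing step is to apply (D4) to the pair $\epsilon, v$, both regarded as prefixes of the same $h(\sigma)$. Then $f(\epsilon)=f(v)$ would force $\epsilon=v$ or $h(\sigma)=v'$. Both are impossible, since $v$ is non-empty and proper. The paper records exactly this observation at the start of the proof of Proposition~\ref{nonexample for 2 consecutive a}. It is also what silently justifies the third case in the paper's verification of (O2).

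\textbf{Why your first two routes cannot work.} The $\epsilon$-comparison only works if (D4) is read as Dekking intends: for proper prefixes $v_i$ of $h(\sigma_{j_i})$, with complements taken inside that same $h(\sigma_{j_i})$. Under a purely word-set reading, the claim you need is false. Take $h(a)=babaaaaabaa$, $h(b)=bab$, $G=\mathbb{Z}/13\mathbb{Z}$, $f(a)=\overline{1}$, $f(b)=\overline{6}$.
\begin{itemize}
\item (D1) holds: the frequency matrix has determinant $13$.
\item (D2) holds: $f(h(a))=f(h(b))=\overline 0$.
\item (D3) holds with $n=10$: $f(\operatorname{Pref}(h))$ misses exactly $\overline 8,\overline 9,\overline{10}$.
\item $f$ is even injective on the word set $\operatorname{Pref}(h)\setminus h(\Sigma)$, so literal (D4) holds.
\end{itemize}
Yet the proper prefix $bab$ of $h(a)$ has $f$-value $\overline 0$, so the extra clause of (O3) fails. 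Indeed $h(b^9a)$ contains $(bab)^{10}$. So no argument using only (D1)--(D3) plus a word-set (D4) can close the gap.

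\textbf{Comparison with the paper.} Once you adopt the letter-tagged reading and the $\epsilon$-comparison, the rest of your argument is correct and essentially the paper's. You use the same factorization of $f$ through $\mathbb{Z}^2/G_h$, (D3) to make the progression of $f$-values constant, and (D4) to pass from equal $f$-values to equal classes. The differences are presentational: you package this as injectivity of $\overline F$ on $\overline{\mathcal P}$ rather than the paper's three-case split, and you verify the extra clause of (O3) explicitly, which the paper's proof leaves implicit.
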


\begin{proof}
To give the essence of the proof, we consider the following diagram:
	
	$$
	\begin{tikzpicture}[auto]
	\node (G) {$G$};
	\node (F) [left of= G ,xshift=-4 cm ] {$\Sigma^*$};
	\node (P) [below of= F, yshift =-1.5cm] {\small{$\N^2 = \psi (\Sigma^*) \leq \Z^2$}};
	\node (Z) [below of =P , yshift=-1.5cm] {$\Z^{2 } /G_h $} ;
	\draw[->] (F)  -- node[midway,above] {$f$} (G);
	\draw[->] (P) -- node[midway,above] {$\tilde f$} (G);
	\draw[->] (F) -- node[midway,left] {$\psi$} (P);
	\draw[->] (P) -- node[midway,left] {$\pi$} (Z);
	\draw[->] (Z) -- node[midway,above] {$\overline f$} (G);
	\end{tikzpicture}
	$$

	We define $ \tilde f :\psi( \Sigma^* ) \to G $ such that $ \Tilde{f} \circ \psi = f$. This is possible since $ G$ is abelian and $ f$ is a morphism. Also, we extend $ \tilde f$ to  
 $\Z^2$ so that it becomes a group homomorphism from $\Z^2$ to $G$.
 We define the map $\pi (x_1,x_2) = \overline{(x_1,x_2)} $ where $\overline{(x_1,x_2)}$ denotes the class of the tuple $(x_1,x_2)$ in $ \Z^2 /G_h$.  We define $\overline{f} $ such that it makes the above diagram commute, which is possible as $$ G_h\subseteq \ker \tilde f .$$
	Now, we will show that $(O1)-(O3)$ from Theorem \ref{our result} are satisfied. Firstly, $(D1)$ is $(O1)$. 
	To see that $(O2)$ holds, let 
 $$ \overline{ \psi (x_0)},\ldots, \overline{\psi(x_n)} $$ be an arithmetic progression of length $n+1$ in $\overline{\mathcal{P}}$ where
 
 $$(x_0 ,\ldots, x_n ) \in \prod_{j=0}^{n}\operatorname{Pref}(h\left(a_j\right)),$$
 and $a_j$ from $\Sigma.$ We will prove that $\overline{\psi(x_i)}$'s form a trivial arithmetic progression.
 Then, we have that
	$\overline{f} ( \overline{\psi (x_0)}) , \ldots , \overline{f} ( \overline{\psi (x_n)}) $ is an arithmetic progression in $G$. As $ \overline{f} ( \overline{\psi (x_i)}) =f(x_i )$ for $i \in\{0,\ldots,n \}$, we have an $(n+1)$-AP given by $f(x_i)$'s. By $(D3)$, one obtains that $f(x_0) = \cdots = f(x_n)$. Now by $(D4)$, one of the following cases hold:
	\begin{enumerate}
		\item $ x_0 =\cdots =x _n$ or
		\item $ x_0 ^{\prime} =\cdots = x_n ^\prime$ or
		\item $x_i\in h(\Sigma) $ for some $0\le i\le n$.
	\end{enumerate}  In the first case, $ \overline{ \psi (x_0)}=\cdots= \overline{\psi(x_n)} $. In the second case,
 $$  \psi(x_i )- \psi(x_j)= \left(\psi(x_i )+ \psi (x_i^\prime)\right) - ( \psi(x_j )+ \psi (x_j^\prime)) \in G_h, $$
 and we obtain the desired equality. In the third case, we have $\overline{\psi(x_i)}=\overline{0}$ for all $0 \le i\le n$, hence the arithmetic progression is trivial.  \\
 
Finally, we will show that $(O3)$ holds. For this purpose, suppose that $$[v]_\mathcal{P}= \{ w \in \mathcal{P}: w-v \in G_h \}$$   contains $\{u,v,w\}$ where
$v\in \mathcal{P} \setminus (\psi(h(\Sigma)) \cup \{0\})$ and $\mathcal{P} = \psi (\operatorname{Pref}{(h)})$. 
There is $a\in \Sigma=\{a,b\}$ and there are two of $ u,v,w$ which come from the prefixes of $h(a) $ as $|\Sigma| =2$. Without loss of generality, say $u $ and $v$. Assume that $ ||u||_2 \geq ||v||_2$. There exist $u_1,v_1 \in \operatorname{Pref}(h (a)) \setminus \{h(a)\}$ such that $ \psi (u_1)=u$ and $\psi(v_1)=v$. This means that $h(a) = v_1 xy$ where $ u_1= v_1x$ and $ \psi(x) \in G_h$. We deduce that $f(u_1) =f(v_1) $. By $(D4)$, the previous equality implies either $u_1=v_1 $ or $ xy=u_1^\prime =v_1 ^\prime=y$. In either case $x=\epsilon$. 
Hence, $u=v$ and we conclude that $[v]_{\mathcal{P}}$ has at most two elements. To see the last part of $(O3)$, suppose that 
$[v]_{\mathcal{P}}=\{v,w\}$ for some $v \in \mathcal{P} \setminus (\psi(h(\Sigma)) \cup \{0\})$ and $v$ and $w$ are distinct. As before, there are $v_1 \in  \operatorname{Pref}(h (a)) \setminus \{h(a)\}$ and $w_1 \in  \operatorname{Pref}(h (b)) \setminus \{h(b)\}$ such that $\psi(v_1)=v$ and 
$\psi(w_1)=w$. As $v,w$ are in the same equivalence class, we conclude that $f(v_1)=f(w_1)$. By $(D4)$, either $v_1=w_1$ or $v_1^\prime=w_1^\prime$ where $h(a)=v_1 v_1^\prime$ and $h(b)=w_1 w_1^\prime.$ The first case is impossible as $v$ and $w$ are distinct. Thus, we get that $v_1^\prime=w_1^\prime$. Hence, 
$$
\psi(h(a))-v=\psi(v_1^\prime)=\psi(w_1^\prime)=
\psi(h(b))-w,
$$
which in turn yields that $v-w=\psi(h(a))-\psi(h(b)).$
 \end{proof}

\begin{remark}

An important question in the literature is whether a $k$-power free morphism is necessarily $(k + 1)$-power free. 
 To clarify, a word is said to be $k$-power free if it does not contain any factor of the form $u^k$, where $u$ is a non-empty word, and $u^k$ denotes the $k$ consecutive repetitions of $u$. Besides, a morphism $f$ is said to be $k$-power free if for any word $u$, $f(u)$ is
$k$-power free whenever the word $u$ is so. 
 The above question was addressed as Conjecture 1 in \cite{Ric}. In the case of a binary alphabet, a positive answer was provided by Leconte \cite[Theorem 12.3.1]{leconte}, who showed that for any $k \geq 3$, if a binary morphism is $k$-power free, then it is $(k + n)$-power free for all integers $n\ge1$. However, for alphabets containing at least three letters, the answer is not always positive. For example, in \cite{bean}, it was shown that there exists a morphism 
	$$ f =   \begin{dcases}
	& a \mapsto abacbab \\
	& b \mapsto cdabcabd \\
	& c \mapsto cdacabcbd \\
	& d \mapsto cdacbcacbd \\
	\end{dcases}$$
	that is 2-power free but not 3-power free, as demonstrated by the fact that $f(aa)$ contains a 3-power.
	This question becomes significant in the context of abelian power-freeness. As seen from the criteria provided in our main result, if a morphism is decided to be abelian $k$-power free using Theorem \ref{our result}, then it is necessarily abelian $(k + n)$-power free for all positive integers $n$. This result highlights  the effectiveness of Theorem \ref{our result} (also of Theorem \ref{carpi}) in deciding the abelian power-freeness of morphisms. 
	\end{remark}

\noindent \textbf{Question:} Before applying our results, 
an intriguing question is the following. Over a binary alphabet, if a morphism satisfies  $(C1)-(C3)$ from Theorem \ref{C93}, does it also satisfy  $(O1)-(O3)$ from Theorem \ref{our result}? Under Carpi's conjecture, does the set of conditions in Theorem \ref{our result} give a complete characterization of abelian $ n $-power free morphisms defined on binary alphabets?

\subsection{Abelian Power-Free Morphisms and Their Fixed Points}
In this subsection, we exhibit abelian power-free morphisms whose fixed point $\Omega$ avoids the factor $bb$ and some power of $a$. This yields an answer to the 2-ball-box distribution problem by Proposition \ref{bp-word} (see Section \ref{not_abelian_power_free}).\\

Recall that the question whether an $\ell$-ball-box distribution $\mathcal{D}$ yields a $k $-BP becomes meaningful when $ \ell \geq 2$. In the realm of words, $\ell=2$ means that in the infinite word $w_\mathcal{D} $ there are no two consecutive $b$'s. Therefore, from now on, we will consider this type of words. Before giving our results, we need the following fact, the so-called Smith decomposition.

\begin{fact} (Smith decomposition \cite[Chapter 3]{Jac})
For any matrix $M \in M_{n}(\mathbb Z)$, there exist $U$, $D$ and $V$ in $M_{n}(\mathbb Z)$ such that
\begin{itemize}
    \item $D=(d_{ij})_{ij}$ is diagonal, that is $d_{ij}=0$ if $i \neq j$,
    \item The determinants of $U$ and $V$ are 1,
    \item $M=UDV.$
\end{itemize}
\end{fact}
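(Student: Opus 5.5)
The plan is to prove the Smith decomposition by reducing $M$ to diagonal form using only integer row and column operations of determinant $1$, and then inverting those operations. The allowed operations are:
\begin{itemize}
\item the transvections $E_{ij}(c)=I+cE_{ij}$ with $i\neq j$ and $c\in\mathbb Z$, which add $c$ times one row (or column) to another;
\item the \emph{signed} transpositions, which send row $i$ to position $j$ and row $j$ to position $i$ with a sign change. These are the $2\times 2$ block $\begin{pmatrix}0&1\\-1&0\end{pmatrix}$ embedded in the identity.
\end{itemize}
Each of these matrices has integer entries and determinant $1$, and so does its inverse. Hence any finite product of them, and the inverse of such a product, lies in $M_n(\mathbb Z)$ with determinant $1$. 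Using signed transpositions instead of ordinary swaps is what guarantees $\det U=\det V=1$ rather than $\pm1$; the extra signs are simply absorbed into the diagonal entries of $D$. We do not need the divisibility chain $d_{11}\mid d_{22}\mid\cdots$, which simplifies the argument.

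The core is an induction on $n$. The case $n=1$ is trivial, as is the case $M=0$ for any $n$.

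For $M\neq0$, pick a nonzero entry of minimal absolute value and move it to position $(1,1)$ by a signed row transposition and a signed column transposition.

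Next, for each entry $m_{i1}$ in the first column, write $m_{i1}=q\,m_{11}+r$ with $0\le r<|m_{11}|$ and subtract $q$ times row $1$ from row $i$. Do the same with columns for the first row.

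If some remainder $r$ is nonzero, the matrix now has a nonzero entry of absolute value strictly smaller than $|m_{11}|$. In that case repeat the previous two steps. Since $|m_{11}|$ is a positive integer that strictly decreases with each repetition, this loop terminates. When it does, every off-diagonal entry of the first row and first column is zero.

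We then have $P M Q=\begin{pmatrix} d_{11}&0\\0&M'\end{pmatrix}$, where $P$ and $Q$ are products of the allowed elementary matrices and $M'\in M_{n-1}(\mathbb Z)$. Apply the induction hypothesis to $M'$, writing $M'=U'D'V'$. Embed $U'$ and $V'$ as $\mathrm{diag}(1,U')$ and $\mathrm{diag}(1,V')$, which keeps the determinant equal to $1$. This gives
\[
M=P^{-1}\,\mathrm{diag}(1,U')\,\mathrm{diag}(d_{11},D')\,\mathrm{diag}(1,V')\,Q^{-1}.
\]
Take $U=P^{-1}\mathrm{diag}(1,U')$, $D=\mathrm{diag}(d_{11},D')$ and $V=\mathrm{diag}(1,V')\,Q^{-1}$.

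The only delicate point is the termination of the Euclidean reduction loop, which is handled by the strictly decreasing positive integer $|m_{11}|$. The determinant-one requirement is handled by using signed transpositions throughout.
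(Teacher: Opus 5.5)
Your proof is correct. The paper gives no argument of its own and only cites Jacobson's normal form theorem for matrices over a principal ideal domain, which rests on the same elementary row/column reduction you use. The two differ in what they deliver. The cited theorem also gives the divisibility chain but only provides invertible $U,V$, so $\det=\pm 1$ over $\mathbb Z$. Your signed transpositions give determinant exactly $1$, as the stated fact requires, and the divisibility chain is not needed for the paper's only use of the fact, namely $|\mathbb Z^{n}/L|=|\det(v_1,\ldots,v_n)|$.
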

Using the Smith decomposition, one can obtain the  following well-known result.

\begin{corollary} \label{rankh}
 Let $L$ be a free abelian subgroup of $\mathbb Z^{n}$ of rank $n$. Suppose that $\{v_1,\ldots,v_n\}$ is a basis of $L$. Then
$$
| \mathbb Z^{n}/L | =|\det (v_1,\ldots,v_n)|.
$$  
\end{corollary}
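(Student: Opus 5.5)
The plan is to derive the statement directly from the Smith decomposition in the Fact above. First, write the basis vectors $v_1,\ldots,v_n$ of $L$ as the columns of an integer matrix $M\in M_n(\mathbb Z)$, so that $L=M\mathbb Z^n$. Since $L$ has rank $n$ and the $v_i$ are independent, $\det M\neq 0$. Then apply the Fact to obtain $M=UDV$ with $\det U=\det V=1$ and $D=\operatorname{diag}(d_1,\ldots,d_n)$.

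The key step is to observe that $U$ and $V$ are invertible over $\mathbb Z$. Their determinants are $1$, so the adjugate formula gives integer inverses. Consequently $V\mathbb Z^n=\mathbb Z^n$, which yields $L=UD\mathbb Z^n$. Left multiplication by $U$ is then an automorphism of $\mathbb Z^n$ carrying $D\mathbb Z^n$ onto $L$. It therefore induces an isomorphism
$$\mathbb Z^n/D\mathbb Z^n\;\cong\;\mathbb Z^n/L.$$

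Next compute the quotient for the diagonal matrix. We have $D\mathbb Z^n=d_1\mathbb Z\oplus\cdots\oplus d_n\mathbb Z$, so $\mathbb Z^n/D\mathbb Z^n\cong\bigoplus_i \mathbb Z/d_i\mathbb Z$. Since $\det D=\det M\neq0$, every $d_i$ is nonzero. Hence the order of this quotient is $\prod_i|d_i|=|\det D|$. Finally, multiplicativity of the determinant gives $|\det D|=|\det U\det D\det V|=|\det M|=|\det(v_1,\ldots,v_n)|$, which is the claimed identity.

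The only point needing care, and the one I would check first, is that the unimodular factors preserve the lattice $\mathbb Z^n$, i.e.\ that $\det =1$ gives integral inverses; everything else is bookkeeping. I would also remark that the answer does not depend on the choice of basis. Any two bases of $L$ differ by an element of $GL_n(\mathbb Z)$, whose determinant is $\pm1$, so $|\det(v_1,\ldots,v_n)|$ is the same for all bases.
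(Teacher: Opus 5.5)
Your proof is correct and follows the route the paper indicates. The paper gives no written argument and only says the corollary follows from the Smith decomposition $M=UDV$; your steps fill that in the standard way: the unimodular factors preserve $\mathbb Z^n$, so $\mathbb Z^n/L\cong\mathbb Z^n/D\mathbb Z^n\cong\bigoplus_i\mathbb Z/d_i\mathbb Z$, whose order is $|\det D|=|\det(v_1,\ldots,v_n)|$.
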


\begin{remark} \label{r214}
  Let $L$ be a free abelian subgroup of $\mathbb Z^{n}$ of rank $n$ and $\{v_1,\ldots,v_n\}$ be a basis of $L$. We also call $L$ a lattice in $\mathbb Z^{n}$.  Given  an element $x=(x_1,\ldots,x_n) \in \mathbb Z^{n}$, one can effectively check whether $x \in L$. Suppose that $z=x^T$ is the column vector of $x$. Let $A$ be the $n \times n$ matrix whose $i$-th column is given by $v_i$ as a column vector for $i \in \{1,\ldots,n\}$. Put $d=| \mathbb Z^{n}/L |=|\det (v_1,\ldots,v_n)|$ and let $B=\operatorname{adj}(A)$ be the adjoint matrix of $A$ whose row vectors are given by $R_i$ for $i \in \{1,\ldots,n\}$. We know that
$A^{-1}=\frac{1}{\det(A)}B$.
  Now, the following are equivalent:
  \begin{enumerate}
\item  $x \in L$,
\item There is a column vector $y$ with coordinates in $\mathbb Z$ such that $Ay=z$,
\item There is a column vector $y$ with coordinates in $\mathbb Z$ such that $y=\frac{1}{\det(A)}B z$,
\item There exist $y_i \in \mathbb Z$ for $i \in \{1,\ldots,n\}$ such that
$y_i=\frac{R_i \cdot x}{\det(A)}$,
\item For $i \in \{1,\ldots,n\}$, 
$$
R_i \cdot x \equiv 0 \ (\text{mod} \ d).
$$
\end{enumerate}
Hence, one sees that $x \in L$ can be effectively checked by the fifth equivalence above. We will also apply this equivalence in the following three results.
\end{remark}

Now, we are ready to give our avoidability results on words using Theorem \ref{our result}.
Note that Dekking's result (Theorem \ref{dekkinglemma}) is not applicable for the morphisms in Theorem \ref{4 consecutive a 6AP}, Theorem \ref{3 consecutive a 9AP} and Theorem \ref{2 consecutive a 16AP}.

\begin{theorem}\label{4 consecutive a 6AP}
There exists an abelian $6$-power free morphism $h$ over the binary alphabet $\Sigma= \left\lbrace a,b\right\rbrace  $ with a fixed point $\Omega_1$ such that
	\begin{itemize}
		\item $ aaaaa $ is not a factor of $ \Omega_1 $  (there are at most $4$ consecutive $a$'s in $ \Omega_1 $),
		\item $ bb $ is not a factor of $ \Omega_1 $  (there are no two consecutive $b$'s in $ \Omega_1 $),
		\item $ \Omega_1 $ is abelian $ 6 $-power free.		
	\end{itemize}
\end{theorem}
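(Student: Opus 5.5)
The plan is to take the binary morphism from Table~\ref{label d1},
$$h(a)=aaaba,\qquad h(b)=bab,$$
show that it is abelian $6$-power free by checking $(O1)$--$(O3)$ of Theorem~\ref{our result} with $n=6$, and then read off the properties of the fixed point $\Omega_1=h^{\omega}(a)$. This fixed point exists because $h(a)$ begins with $a$ and $|h(a)|>1$.

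\textbf{Step 1: $(O1)$ and the quotient group.} The frequency matrix has rows $(4,1)$ and $(1,2)$, so its determinant is $7\neq 0$ and $(O1)$ holds. By Corollary~\ref{rankh}, $\Z^2/G_h$ has order $7$, so it is isomorphic to $\Z/7\Z$. To make the isomorphism concrete, I will use the functional $\varphi(x,y)=x+3y \bmod 7$. It vanishes on $(4,1)$ and on $(1,2)$, and it is onto $\Z/7\Z$, so it identifies $\Z^2/G_h$ with $\Z/7\Z$ (compare Remark~\ref{r214}).

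\textbf{Step 2: $(O2)$ and $(O3)$.} I will list the prefixes of $h$ with their $\varphi$-values:
\begin{itemize}
\item from $h(a)$: $\epsilon\mapsto 0$, $a\mapsto 1$, $aa\mapsto 2$, $aaa\mapsto 3$, $aaab\mapsto 6$, $aaaba\mapsto 0$;
\item from $h(b)$: $b\mapsto 3$, $ba\mapsto 4$, $bab\mapsto 0$.
\end{itemize}
Hence $\overline{\mathcal P}$ corresponds to $\{0,1,2,3,4,6\}$, a proper subset of $\Z/7\Z$. A non-trivial arithmetic progression in $\Z/7\Z$ of length $7$ runs through every residue, so $\operatorname{a-rk}(\overline{\mathcal P})\le 6$ and $(O2)$ holds.

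For $(O3)$, no non-empty proper prefix has value $0$. The only coincidence among them is $\psi(aaa)=(3,0)$ and $\psi(b)=(0,1)$, both with value $3$. Their difference is $(3,-1)=\psi(h(a))-\psi(h(b))$, where $aaa$ is a prefix of $h(a)$ and $b$ is a prefix of $h(b)$, and this is exactly the required form. Theorem~\ref{our result} then shows that $h$ is abelian $6$-power free.

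\textbf{Step 3: properties of $\Omega_1$.} The letter $a$ is abelian $6$-power free, so by induction every $h^m(a)$ is abelian $6$-power free. Every factor of $\Omega_1$ lies in some $h^m(a)$, so $\Omega_1$ is abelian $6$-power free.

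For $bb$: neither $h(a)$ nor $h(b)$ contains $bb$. An image ends in $b$ only if it is $h(b)$, and an image begins with $b$ only if it is $h(b)$. So $bb$ can straddle a boundary between images only when $bb$ already occurs in the preimage. Since $\Omega_1=h(\Omega_1)$, induction on prefixes shows $bb$ never occurs.

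For $aaaaa$: every image $h(x)$ contains a $b$, so a run of $a$'s meets at most one boundary between images. Within one image the longest run is $aaa$. At a boundary, a run is at most one trailing $a$ followed by at most three leading $a$'s, so every run has length at most $4$. The case $h(a)h(a)=aaaba\,aaaba$ shows that length $4$ is attained.

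I do not expect a real obstacle here; the work is mostly bookkeeping. The step that needs the most care is the $(O3)$ check: listing all prefix Parikh vectors correctly and making sure the unique collision has the form $\psi(h(p))-\psi(h(q))$ with the orientation Theorem~\ref{our result} requires.
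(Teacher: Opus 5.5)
Your proposal is correct and follows the paper's proof: it uses the same morphism $h(a)=aaaba$, $h(b)=bab$ and verifies $(O1)$--$(O3)$ of Theorem~\ref{our result} through the isomorphism $\mathbb{Z}^2/G_h\cong\mathbb{Z}/7\mathbb{Z}$. Your functional $\varphi(x,y)=x+3y \bmod 7$ is exactly the paper's map $\overline{(1,0)}\mapsto\overline{1}$, and your boundary arguments for $bb$ and $a^5$ spell out what the paper checks briefly via $h(aa)$, $h(ab)$, $h(ba)$.
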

\begin{proof}
	Consider the morphism $h: \Sigma^* \to \Sigma^ * $ given by
	\begin{align*}
	&h(a) = aaaba\\
	&h(b) = bab.
	\end{align*}
	The frequency matrix of $h $ is $$ \begin{pmatrix}
	4 & 1\\1& 2
	\end{pmatrix}$$ which is non-singular with determinant $7$. 
 Let $G_h$ be the subgroup of $\mathbb{Z}^{2}$ generated by $\psi(h(\Sigma))=\{(4,1), (1,2)\}$.
 Hence, $ \mathbb{Z}^{2}/G_h \cong \Z/7\Z $ by Corollary \ref{rankh}. 
 By Remark \ref{r214}, or one can directly check that an element $(x,y)$ from $ \mathbb{Z}^{2}$ is in $G_h$ if and only if
\begin{equation} \label{mod7}
    2x-y \equiv 0 \ (\text{mod} \ 7) \ \ \text{and} \ -x+4y \equiv 0 \ (\text{mod} \ 7).
\end{equation}
 Notice that the set of Parikh vectors of $h$ is 
 $$\mathcal{P} = \{ (0,0), (1,0),(2,0),(3,0),(3,1), (4,1), (0,1),(1,1), (1,2)\}.$$ There are at most two elements in the same equivalence class given by $G_h$ for the elements of $\mathcal{P} \setminus \{(0,0), (4,1), (1,2)\}$. In fact, $(3,0)$ and $(0,1)$ are in the same equivalence class and
 $$(3,0)-(0,1)=(4,1)-(1,2).$$
 All the other elements of $\mathcal{P} \setminus \{(0,0), (4,1), (1,2)\}$ have trivial equivalence classes in $\mathcal{P}$ and this can be obtained by \eqref{mod7}. Therefore, $$\overline{\mathcal{P}}= \{ \overline{(0,0)}, \overline{(1,0)},\overline{(2,0)},\overline{(3,0)}, \overline{(1,1)},\overline{(3,1)} \}.$$ 
The map 
\begin{align*}
	f: \mathbb{Z}^{2}/G_h &\to \Z/7\Z \\
	\overline{(1,0)} &\mapsto \overline{1} 
	\end{align*}  yields an isomorphism with 
  \begin{align*}
    f(\overline{(0,0)}) =\overline{0}, \ f(\overline{(1,0)})=\overline
	1 ,\ f(\overline{(2,0)}) =\overline 2, \ f(\overline{( 3,0) })= \overline{3}, \ f(\overline{(1,1)}) = \overline{4} , \ f(\overline{(3,1)}) =\overline{6}.   
 \end{align*}
This indicates that $\operatorname{a-rk } ( \overline{\mathcal{P}}) = 6$
as $\operatorname{a-rk }(\{\overline{0}, \overline{1},\overline{2},\overline{3},\overline{4}, \overline{6}\})=6$ in $\Z/7\Z$.
By Theorem \ref{our result}, we obtain that $h$ is an abelian 6-power free morphism.
Let $\Omega_1=h^{\omega}(a)$
be the infinite fixed point of $h$ which is obtained by repeatedly applying $h$ to $a$. Therefore, $\Omega_1$ is abelian 6-power free. 
To check the first and the second condition, just note that in $h(a)$ and $h(b)$ there are no $aaaaa$ and $bb$ as factors, and $h(aa)$, $h(ab) $, $h(ba)$ do not produce the factors $aaaaa$ and $bb$ as well. 
Hence, the result follows.	
\end{proof}

\begin{theorem}\label{3 consecutive a 9AP}
    There exists an abelian $9$-power free morphism $h$ over the binary alphabet $\Sigma= \left\lbrace a,b\right\rbrace  $ with a fixed point $\Omega_2$ such that
\begin{itemize}
	\item $ aaaa $ is not a factor of $ \Omega_2 $  (there are at most $3$ consecutive $a$'s in $ \Omega_2 $),
	\item $ bb $ is not a factor of $ \Omega_2 $  (there are no two consecutive $b$'s in $ \Omega_2 $),
	\item $ \Omega_2 $ is abelian $ 9 $-power free.		
\end{itemize}
\end{theorem}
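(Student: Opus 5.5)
The plan is to mirror the proof of Theorem \ref{4 consecutive a 6AP}. I take the morphism listed in Table \ref{label d1}:
$$h(a)=abaaaba,\qquad h(b)=babab,$$
and verify $(O1)$--$(O3)$ of Theorem \ref{our result} with $n=9$.

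\textbf{Step 1: $(O1)$ and the quotient group.} The frequency matrix is $\begin{pmatrix}5&2\\2&3\end{pmatrix}$, with determinant $11$, so $(O1)$ holds. By Corollary \ref{rankh}, $\mathbb{Z}^2/G_h\cong\mathbb{Z}/11\mathbb{Z}$. Concretely, I will use the homomorphism $\phi(x,y)=x+3y \bmod 11$. It kills $(5,2)$ and $(2,3)$ and is onto, so its kernel is exactly $G_h$. This is equivalent to the adjugate test of Remark \ref{r214} with rows $(3,-2)$ and $(-2,5)$.

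\textbf{Step 2: the prefix vectors.} I list the Parikh vectors of prefixes:
$$\mathcal{P}=\{(0,0),(1,0),(1,1),(2,1),(3,1),(4,1),(4,2),(5,2),(0,1),(1,2),(2,2),(2,3)\}.$$
Their images under $\phi$ are
$$0,\ 1,\ 4,\ 5,\ 6,\ 7,\ 10,\ 0,\ 3,\ 7,\ 8,\ 0 .$$

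\textbf{Step 3: $(O3)$.} Among the vectors other than $0$, $(5,2)$ and $(2,3)$, the only collision is $(4,1)$ (a prefix of $h(a)$) with $(1,2)$ (a prefix of $h(b)$). For this pair,
$$(4,1)-(1,2)=(3,-1)=(5,2)-(2,3)=\psi(h(a))-\psi(h(b)).$$
No non-empty proper prefix lands in the class of $0$. Hence $(O3)$ holds, with each class of size at most $2$ and the required difference identity.

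\textbf{Step 4: $(O2)$.} Here $\overline{\mathcal{P}}\cong\{0,1,3,4,5,6,7,8,10\}\subset\mathbb{Z}/11\mathbb{Z}$, which misses exactly $2$ and $9$. Since $11$ is prime, a non-trivial arithmetic progression with $10$ terms has $10$ distinct terms. It therefore omits only one residue, so it must contain $2$ or $9$. Consequently $\operatorname{a-rk}(\overline{\mathcal{P}})\le 9$, and Theorem \ref{our result} shows that $h$ is abelian $9$-power free.

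\textbf{Step 5: the fixed point.} Set $\Omega_2=h^\omega(a)$, which exists because $h(a)$ begins with $a$. Since $h$ is abelian $9$-power free and $a$ is abelian $9$-power free, each iterate $h^m(a)$ is abelian $9$-power free, and hence so is $\Omega_2$.

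\textbf{Step 6: avoiding $aaaa$ and $bb$.} I argue by induction on the prefixes $h^m(a)$. Each image $h(\sigma)$ has length at least $5$, so any occurrence of $aaaa$ or $bb$ in $h(w)$ lies inside $h(xy)$ for some length-$2$ factor $xy$ of $w$. It therefore suffices to check that $h(aa)$, $h(ab)$ and $h(ba)$ contain neither $aaaa$ nor $bb$. This holds because $h(a)$ starts with $ab$ and ends with $ba$, $h(b)$ starts and ends with $b$, and the longest run of $a$'s inside $h(a)$ is $aaa$, which is interior.

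The only place needing care is the arithmetic-rank bound in Step 4, and the complement-counting argument in $\mathbb{Z}/11\mathbb{Z}$ handles it cleanly.
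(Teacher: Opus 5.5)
Your proposal is correct and follows the paper's proof essentially step for step. Both use the same morphism $h(a)=abaaaba$, $h(b)=babab$ and verify $(O1)$--$(O3)$ through $\mathbb{Z}^2/G_h\cong\mathbb{Z}/11\mathbb{Z}$; the paper's isomorphism $\overline{(1,0)}\mapsto\overline{1}$ is exactly your $\phi(x,y)=x+3y$. Your complement-counting bound $\operatorname{a-rk}(\overline{\mathcal{P}})\le 9$ and your block-length induction for avoiding $aaaa$ and $bb$ simply make explicit details that the paper asserts without spelling out.
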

\begin{proof}
	Consider the morphism $h: \Sigma ^* \to \Sigma^ * $ given by
	\begin{align*}
	&h(a) = abaaaba\\
	&h(b) = babab.
	\end{align*}
	The frequency matrix of $h $ is $$\begin{pmatrix}
	5 & 2\\2& 3
	\end{pmatrix}$$ and it is non-singular with determinant $11$.
  Let $G_h$ be the subgroup of $\mathbb{Z}^{2}$ generated by $\psi(h(\Sigma))=\{(5,2), (2,3)\}$.
	Hence, $ \mathbb{Z}^{2}/G_h \cong \Z/11\Z $ by Corollary \ref{rankh}.
  One can infer that an element $(x,y)$ from $ \mathbb{Z}^{2}$ is in $G_h$ if and only if
\begin{equation} \label{mod11}
    3x-2y \equiv 0 \ (\text{mod} \ 11) \ \ \text{and} \ -2x+5y \equiv 0 \ (\text{mod} \ 11).
\end{equation}
 Note that the set of Parikh vectors of $h$ is $$\mathcal{P} =  \{ {(0,0)}, {(1,0)},{(1,1)},{(2,1)}, {(3,1)},{(4,1)} ,{(4,2)}, (5,2), {(0,1)},{(1,2)} ,{(2,2)}, (2,3)\}.$$ 
	There are at most two elements in the same equivalence class given by $G_h$ for the elements of $\mathcal{P} \setminus \{(0,0), (5,2), (2,3)\}.$ More precisely, $(4,1)$ and $(1,2)$ are in the same equivalence class with $$(4,1)-(1,2)=(5,2)-(2,3).$$
 All the other elements of $\mathcal{P} \setminus \{(0,0), (5,2), (2,3)\}$ have trivial equivalence classes in $\mathcal{P}$ and one can see this by \eqref{mod11}. Therefore, $$\overline{\mathcal{P}}= \{ \overline{(0,0)}, \overline{(1,0)},\overline{(1,1)},\overline{(2,1)}, \overline{(3,1)},\overline{(4,1)} ,\overline{(4,2)},\overline{(0,1)},\overline{(2,2)}\}.$$ 
	The map 
	\begin{align*}
	f:  \mathbb{Z}^{2}/G_h &\to \Z/11\Z \\
	\overline{(1,0)} &\mapsto \overline{1 } 
	\end{align*}  
	gives an isomorphism with 
	
 \begin{align*}
     f(\overline{(0,0)}) &=\overline{0}, \ f(\overline{(1,0)})=\overline
	1 ,\ f(\overline {(0,1)}) =\overline 3, \ f(\overline{( 1,1) })= \overline{4}, \ f(\overline{(2,1)}) = \overline{5},\\
 &f(\overline{(3,1)}) = \overline{6} , \ f(\overline{(4,1)}) =\overline{7}, \ f(\overline{(2,2)}) =\overline{8},\ f(\overline{(4,2)}) =\overline{10}.
 \end{align*}
 This signifies that $\operatorname{a-rk } ( \overline{\mathcal{P}}) = 9$ as $\operatorname{a-rk }(\{\overline{0}, \overline{1},\overline{3},\overline{4},\overline{5}, \overline{6}, \overline{7}, \overline{8}, \overline{10}\})=9$ in $\Z/11\Z$.
By Theorem \ref{our result}, we obtain that $h$ is an abelian 9-power free morphism.
Let $\Omega_2=h^{\omega}(a)$
be the infinite fixed point of $h$ which is obtained by repeatedly applying $h$ to $a$. Thus, $\Omega_2$ is abelian 9-power free. 
The first and the second condition follow similarly as in Theorem \ref{4 consecutive a 6AP}.
	\end{proof}

\begin{theorem}\label{2 consecutive a 16AP}
    There exists an abelian $16$-power free morphism $h$ over the binary alphabet $\Sigma= \left\lbrace a,b\right\rbrace  $ with a fixed point $\Omega_3$ such that
\begin{itemize}
	\item $ aaa $ is not a factor of $ \Omega_3 $  (there are at most $2$ consecutive $a$'s in $ \Omega_3 $),
	\item $ bb $ is not a factor of $ \Omega_3 $  (there are no two consecutive $b$'s in $ \Omega_3 $),
	\item $ \Omega_3 $ is abelian $ 16 $-power free.		
\end{itemize}
	\end{theorem}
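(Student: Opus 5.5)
We follow the proofs of Theorems \ref{4 consecutive a 6AP} and \ref{3 consecutive a 9AP}: we take the morphism $h$ from Table \ref{label d1},
\begin{align*}
h(a) &= abaabaababa,\\
h(b) &= babababab,
\end{align*}
verify $(O1)$--$(O3)$ of Theorem \ref{our result} with $n=16$, and then check the two factor-avoidance conditions directly.

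\textbf{Step 1: $(O1)$ and the quotient group.} Counting letters, $\psi(h(a))=(7,4)$ and $\psi(h(b))=(4,5)$. So the frequency matrix is $\begin{pmatrix}7&4\\4&5\end{pmatrix}$, with determinant $19$, and $(O1)$ holds. By Corollary \ref{rankh}, $\mathbb{Z}^2/G_h\cong \Z/19\Z$. An explicit isomorphism is $f(x,y)=x+3y \pmod{19}$, since $f(7,4)=19\equiv 0$ and $f(4,5)=19\equiv 0$. Equivalently, Remark \ref{r214} gives a congruence test for membership in $G_h$.

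\textbf{Step 2: prefix vectors and $(O3)$.} We list $\mathcal{P}$ together with the values of $f$.
\begin{itemize}
\item Prefixes of $h(a)$: the vectors $(0,0),(1,0),(1,1),(2,1),(3,1),(3,2),(4,2),(5,2),(5,3),(6,3),(6,4),(7,4)$, with $f$-values $0,1,4,5,6,9,10,11,14,15,18,0$.
\item Prefixes of $h(b)$: the vectors $(0,1),(1,1),(1,2),(2,2),(2,3),(3,3),(3,4),(4,4),(4,5)$, with $f$-values $3,4,7,8,11,12,15,16,0$.
\end{itemize}
No nonempty proper prefix has $f$-value $0$, so the second requirement of $(O3)$ holds. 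The only coincidences of $f$-values between distinct vectors are
\[
f(5,2)=f(2,3)=11 \qquad\text{and}\qquad f(6,3)=f(3,4)=15 .
\]
The vector $(1,1)$ occurs in both lists, but it is a single vector. Hence every class $[v]_{\mathcal P}$ has at most two elements. In both coincidences the difference is $(3,-1)=(7,4)-(4,5)=\psi(h(a))-\psi(h(b))$, and the first vector comes from a prefix of $h(a)$ while the second comes from a prefix of $h(b)$. This is exactly the last requirement of $(O3)$.

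\textbf{Step 3: $(O2)$.} The image is $f(\overline{\mathcal P})=\Z/19\Z\setminus\{\overline 2,\overline{13},\overline{17}\}$, which has $16$ elements. In $\Z/19\Z$ any nontrivial progression of length $17$ consists of $17$ distinct elements, because $19$ is prime. Such a progression cannot lie inside a $16$-element set, so $\operatorname{a-rk}(\overline{\mathcal P})\le 16$. Theorem \ref{our result} then shows that $h$ is abelian $16$-power free. Consequently $\Omega_3=h^\omega(a)$ is abelian $16$-power free: it is the limit of the words $h^m(a)$, each of which is abelian $16$-power free by induction on $m$.

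\textbf{Step 4: factor conditions.} We argue by induction on the iterates $h^m(a)$. Inside $h(a)$ and $h(b)$ there is no $bb$ and no $aaa$. Now look at the junctions. The word $h(a)$ begins with $a$ and ends with $ba$, and $h(b)$ begins and ends with $b$.
\begin{itemize}
\item $h(ab)$ and $h(ba)$ join $a|b$ and $b|a$, which creates neither forbidden factor.
\item $h(aa)$ contains $\ldots ba|ab\ldots$, which gives $aa$ but not $aaa$.
\item $h(bb)$ would create $bb$, but $bb$ never occurs in the preimage by the induction hypothesis.
\end{itemize}
So neither $aaa$ nor $bb$ occurs in any $h^m(a)$, and hence neither occurs in $\Omega_3$.

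The only step needing real care is Step 2: getting the list of prefix Parikh vectors right and confirming that both collisions have difference exactly $\psi(h(a))-\psi(h(b))$, in the orientation required by $(O3)$. Everything else is routine arithmetic modulo $19$.
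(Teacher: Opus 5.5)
Your proposal is correct and follows the paper's proof essentially step for step. You use the same morphism, the same isomorphism $\overline{(x,y)}\mapsto\overline{x+3y}$ onto $\Z/19\Z$, the same two collisions $(5,2)\sim(2,3)$ and $(6,3)\sim(3,4)$ with difference $\psi(h(a))-\psi(h(b))$ for $(O3)$, and Theorem~\ref{our result} with $n=16$. Your cardinality argument for $(O2)$ (a nontrivial $17$-term progression in $\Z/19\Z$ has $17$ distinct terms), your induction showing the fixed point inherits abelian $16$-power freeness, and your explicit junction check for $aaa$ and $bb$ simply spell out details the paper leaves implicit.
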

\begin{proof}
	Consider the morphism $h: \Sigma ^* \to \Sigma^ * $ given by
	\begin{align*}
	&h(a) = abaabaababa\\
	&h(b) = babababab.
	\end{align*}
	The frequency matrix of $h $ is $$\begin{pmatrix}
	7 & 4\\4& 5
	\end{pmatrix},$$ and it is non-singular with determinant $19$.
 Let $G_h$ denote the subgroup of $\mathbb{Z}^{2}$ generated by $\psi(h(\Sigma))=\{(7,4), (4,5)\}$. 
	Hence, $ \mathbb{Z}^{2}/G_h \cong \Z/19\Z $ by Corollary \ref{rankh}. 
  One can deduce that an element $(x,y)$ from $ \mathbb{Z}^{2}$ is in $G_h$ if and only if
\begin{equation} \label{mod19}
    5x-4y \equiv 0 \ (\text{mod} \ 19) \ \ \text{and} \ -4x+7y \equiv 0 \ (\text{mod} \ 19).
\end{equation}
 Observe that the set of Parikh vectors of $h$ is \begin{align*}\mathcal{P}=  \{ ( 0 , 0 ),
	( 1 , 0 ),
	( 1 , 1 ),
	( 2 , 1 ),
	( 3 , 1 ),
	( 3 , 2 ),
	( 4 , 2 ),
	( 5 , 2 ),
	( 5 , 3 ),
	( 6 , 3 ),
    ( 6, 4  ), 
    ( 7, 4  ), \\
	( 0 , 1 ),
	( 1 , 2 ),
	( 2 , 2 ),
	( 2 , 3 ),
	( 3 , 3 ),
	( 3 , 4 ),
	( 4 , 4 ),
	( 4 , 5 )\}.\end{align*}
	There are at most two elements in the same equivalence class given by $G_h$ for the elements of $\mathcal{P} \setminus \{(0,0), (7,4), (4,5)\}$. In fact, $(5,2)$ and $(2,3)$ are in the same equivalence class with $$(5,2)-(2,3)=(7,4)-(4,5),$$ and $ (6,3)$ and $(3,4)$
 are in the same equivalence class  with $$(6,3)-(3,4)=(7,4)-(4,5).$$ Apart from them, all the other elements of $\mathcal{P} \setminus \{(0,0), (7,4), (4,5)\}$ have trivial equivalence classes in $\mathcal{P}$ by \eqref{mod19}.  
 Therefore, \begin{align*} \overline{\mathcal{P}}= \{ \overline{( 0 , 0 )},
	\overline{( 1 , 0 )},
	\overline{( 0 , 1 )},
	\overline{( 1 , 1 )},
	\overline{( 2 , 1 )},
	\overline{( 3 , 1 )},
	\overline{( 1 , 2 )},
	\overline{( 2 , 2 )},
	\overline{( 3 , 2 )},\\
	\overline{( 4 , 2 )},
	\overline{( 5 , 2 )},
	\overline{( 3 , 3 )},
	\overline{( 5 , 3 )},
	\overline{( 6 , 3 )},
	\overline{( 4 , 4 )},
	\overline{( 6 , 4 )}\}.\end{align*}
	The map 
	\begin{align*}
	f: \Z^2 /G_h &\to \Z/19\Z \\
	\overline{(1,0)} &\mapsto \overline{1 } 
	\end{align*}  
	yields an isomorphism with     
 \begin{align*}
        f (\overline {( 0 , 0 )})= \overline{   0   },
	\ f (\overline {( 1 , 0 )})= \overline{   1   },
	\ f (\overline {( 0 , 1 )})= \overline{   3   },&
	\ f (\overline {( 1 , 1 )})= \overline{   4   },
	\ f (\overline {( 2 , 1 )})= \overline{   5   },
	\ f (\overline {( 3 , 1 )})= \overline{   6   },\\
	f (\overline {( 1 , 2 )})= \overline{   7   },
	\ f (\overline {( 2 , 2 )})= \overline{   8   },
	\ f (\overline {( 3 , 2 )})&= \overline{   9   },
	\ f (\overline {( 4 , 2 )})= \overline{   10   },
	\ f \overline {( 5 , 2 )}= \overline{   11   },\\
	f (\overline {( 3 , 3 )})= \overline{   12   },
	\ f (\overline {( 5 , 3 )})= \overline{   14   },
	\ f (\overline {( 6 , 3 )})&= \overline{   15   },
	\ f (\overline {( 4 , 4 )})= \overline{   16   },
	\ f (\overline {( 6 , 4 )})= \overline{   18   }.
 \end{align*}
 This indicates that $\operatorname{a-rk } ( \overline{\mathcal{P}}) = 16$
 as 
 $$\operatorname{a-rk }(\{\overline{0}, \overline{1},\overline{3},\overline{4},\overline{5}, \overline{6}, \overline{7}, \overline{8}, \overline{9}, \overline{10}, \overline{11}, \overline{12}, \overline{14}, \overline{15}, \overline{16}, \overline{18}\})=16$$ 
 in $\Z/19\Z$. The previous equality follows from the fact that $\overline{2}, \overline{17}, \overline{13} $ is a $3$-AP in $\Z/19\Z$.
By Theorem \ref{our result}, we obtain that $h$ is an abelian 16-power free morphism.
Let $\Omega_3=h^{\omega}(a)$
be the infinite fixed point of $h$ which is obtained by repeatedly applying $h$ to $a$. Hence, $\Omega_3$ is abelian 16-power free. 
The first and the second condition follow similarly as in Theorem \ref{4 consecutive a 6AP}.
\end{proof}

\subsection{\texorpdfstring{Some Methodological Constraints under $bb$-Avoidance}{Some Methodological Constraints under bb-Avoidance}}

When examining the ball-box distribution problem through the distinct frameworks of Dekking’s theorem, Carpi’s conditions, and the template method, different methodological constraints naturally arise.
The objective of this part is to provide a quantitative comparison of the limitations of the various approaches. In particular, using Dekking's theorem (Theorem \ref{dekkinglemma}),
we show that we can give a partial answer to the  2-ball-box distribution problem. The abelian powers of the infinite words that we produce using Dekking's result will be higher, and there are cases where Dekking's result is not applicable as in the case of avoiding squares over an alphabet of size 4. 
When Dekking's result is applicable, the natural subsequent goal is to improve the bound on the avoidance of abelian powers. We pose this as an open problem in the last section. In this subsection, we present the best avoidance results that we achieved through the application of Dekking's theorem.
The proposition below establishes that Theorem \ref{dekkinglemma} fails to provide a
result in the case where fixed points contain no three consecutive $a$’s
and no two consecutive $b$’s (see Theorem \ref{2 consecutive a 16AP}). 
Thus, it can be seen that this part of our problem is closed to Dekking's result. The next proposition also sheds some light on why the morphism $h$ from Theorem \ref{2 consecutive a 16AP} has the property that $h(a)$ starts with $ab$ and $h(b)$ starts with $ba$.

\begin{proposition}\label{nonexample for 2 consecutive a}
Let $\Sigma=\{a,b\}$
be a binary alphabet.
Let $G$ be a finite abelian group, $h:\Sigma^*\to \Sigma^*$ and $f:\Sigma^*\to G$ be monoid morphisms such that
$(D1)-(D4)$ from Theorem \ref{dekkinglemma} are satisfied.
Assume that none of $aaa$ and $bb$ is a factor of $h(a)$ and $h(b)$, and $|h(a)|, |h(b)| >1.$ 
Then, at least one of $aaa$ and $bb$ is a factor of $h^3(a),$ and  at least one of $aaa$ and $bb$ is a factor of $h^3(b)$.
\end{proposition}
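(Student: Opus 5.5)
The plan is a boundary-letter case analysis. For $x,y\in\Sigma$ the word $h(xy)=h(x)h(y)$ contains every factor of $h(x)$ and of $h(y)$, plus the factors straddling the junction. By hypothesis $h(a),h(b)$ contain neither $aaa$ nor $bb$, so any forbidden factor in $h^3(a)=h(h^2(a))$ must come from a junction $h(x)\,|\,h(y)$ with $xy$ a two-letter factor of $h^2(a)$. There are two ways this happens. A $bb$ arises exactly when $h(x)$ ends in $b$ and $h(y)$ begins with $b$. An $aaa$ arises when the $a$-run at the end of $h(x)$ and the $a$-run at the start of $h(y)$ have total length at least $3$. So I will first pin down, from $(D1)$--$(D4)$, the possible first and last letters (and short boundary runs) of $h(a)$ and $h(b)$. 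Then I will exhibit a suitable factor $xy$ in $h^2(a)$ and in $h^2(b)$.

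\emph{Step 1 (letters present).} Since $|h(\sigma)|>1$ and $bb$ is not a factor, each $h(\sigma)$ contains an $a$. If some $h(\sigma)$ contained no $b$, it would be $aa$. I will rule this out, or handle it directly, using $(D1)$ together with $(D3)$/$(D4)$: for instance, $h(a)=aa$ gives $f(a)+f(a)=0$ by $(D2)$, and the proper prefix $a$ then interacts with the prefixes of $h(b)$ through $(D4)$. Hence generically both images contain $ab$ or $ba$. Consequently $h(a)$, and therefore $h^2(a)$ and $h^2(b)$, contain both letters and at least one of the factors $ab,ba$. Moreover, whenever some $h(\sigma)$ contains $aa$, the iterates contain $aa$ as well.

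\emph{Step 2 (constraints from Dekking's conditions; the main obstacle).} Here I use $(D4)$ on pairs of short proper prefixes of $h(a)$ and $h(b)$, for example the prefixes of length $1$ and $2$. If these have equal $f$-values, then $(D4)$ forces either equal prefixes or equal complementary suffixes. I combine this with $(D2)$, i.e.\ $f(h(a))=f(h(b))=0$, and with $(D3)$. The point of $(D3)$ is that a word with no $aaa$ and no $bb$ is a concatenation of blocks $ab$ and $aab$, up to boundary. A long run of identical blocks inside $h(\sigma)$ makes the prefix values $f(v)$ an arithmetic progression, while $(D3)$ allows only $\operatorname{a-rk}(A)\le n$ and $(D4)$ limits coincidences of $f$-values among distinct proper prefixes.

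From these constraints the goal is a dichotomy. Either (i) some $h(x)$ ends with $b$ while some $h(y)$ begins with $b$, or (ii) the boundary $a$-runs are long enough, say some $h(x)$ ends in $aa$ while some $h(y)$ begins with $a$, or vice versa. This is the step I expect to be delicate. The first thing I would try is to show that $h(a)$ and $h(b)$ cannot both begin with $a$ and both end with a single $a$ preceded by $b$ (the ``safe'' configuration). The idea is that in this configuration $(D4)$, applied to the prefix $a$ of both images and to the suffix-complements, combined with $(D2)$ and non-singularity $(D1)$, yields a contradiction. The remaining safe configurations, such as $h(a)$ starting with $ab$ and $h(b)$ with $ba$, I expect to be killed similarly.

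\emph{Step 3 (realising the bad junction).} In each surviving case I locate the needed two-letter factor $xy$ in $h^2(a)$ and in $h^2(b)$. By Step 1 the words $h^2(a)$ and $h^2(b)$ contain $ab$, $ba$, and $aa$ when required: $h^2(\sigma)=h(h(\sigma))$ contains $h(a)$ and $h(b)$ together with their junctions. Applying $h$ once more then produces $bb$ or $aaa$ inside $h^3(a)$ and $h^3(b)$. The cube rather than the square is needed precisely because the required letter pair $xy$ may only appear after one iteration.
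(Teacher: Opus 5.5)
There is a genuine gap. Your junction framing is sound: since $|h(\sigma)|>1$ and neither image contains $aaa$ or $bb$, a forbidden factor of $h^3(\sigma)$ must straddle some $h(x)\,|\,h(y)$ with $xy$ a factor of $h^2(\sigma)$. The trouble is Step 2. It is the only place where $(D1)$--$(D4)$ are used, so it carries the whole content of the proposition, and it is announced rather than proved.

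The mechanism you propose for Step 2 gives nothing. Comparing prefixes of length $1$ and $2$ only helps when the first letters of $h(a)$ and $h(b)$ differ. In your ``safe'' configuration, $(D4)$ is automatically satisfied by the equal prefixes $a,a$. It is also satisfied by the two prefixes obtained by deleting the final $a$: their complements are both $a$, and $(D2)$ gives both the value $-f(a)$. So no contradiction can come from there.

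The missing idea is to compare the images at their \emph{first point of divergence}. Let $w$ be the longest common prefix of $h(a)$ and $h(b)$. Then $w=\epsilon$ or $w$ ends in $a$, since otherwise $wb$ contains $bb$. The bans therefore force the continuations $wab\cdots$ and $wba\cdots$; for $w=\epsilon$ the $a$-branch may instead continue as $aa$. The prefixes $wab$ and $wba$ have the same Parikh vector, hence the same $f$-value. By $(D4)$ their complements are equal, so the two rows of the frequency matrix coincide, contradicting $(D1)$.

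This leaves the main case $\{h(a),h(b)\}=\{aa\cdots,\,ba\cdots\}$, plus degenerate cases where an image is too short to contain $wab$ or $wba$ as a proper prefix. The degenerate cases need separate tools:
\begin{itemize}
\item $(D3)$: when $f(a)=f(b)\neq 0$, the prefix values of $h(a)$ form the cyclic subgroup generated by $f(a)$, which contains an arithmetic progression of unbounded length.
\item The observation that $(D4)$, applied to $\epsilon$ and $x$, forbids $f(x)=0$ for a non-empty proper prefix $x\notin h(\Sigma)$.
\item A direct computation of $h^3$, e.g.\ for $h(a)=aa$, $h(b)=ab$.
\end{itemize}
Your intended use of $(D3)$ via long runs of the blocks $ab$ and $aab$ plays no role.

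Even granting Step 2, the dichotomy you aim for is too weak for Step 3. Alternative (i) might only be realised with $x=y=b$, and you cannot rely on the pair $bb$ occurring in $h^2(\sigma)$. Similarly, a junction that needs the pair $aa$ requires $aa$ in $h^2(\sigma)$, which you assert only ``when required''.

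What closes the main case is the stronger shape $h(x)=aa\cdots$, $h(y)=ba\cdots$. Whatever letter $h(z)$ ends with, it produces $aaa$ in front of $h(x)$ or $bb$ in front of $h(y)$. It then remains to check that a suitable pair $zx$ or $zy$ actually occurs in $h^2(a)$ and in $h^2(b)$, with short images such as $aa$ treated separately.

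The paper carries out exactly this reduction. It takes the common prefix $\alpha_1\cdots\alpha_k$ and treats Cases 1.1--1.3, then reduces to $h(x)=aau$, $h(y)=bav$ and treats Cases 2.1--2.4, inspecting $h^3(a)$ and $h^3(b)$ explicitly. Your proposal names the destination but not the route.
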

\begin{proof}
Suppose not and we first
assume that $h$ takes the same values on $a$ and $b$ up to their first $k$ letters for some positive integer $k$. Without loss of generality, we write
	\begin{align*}
	h(a) &= \alpha_1\cdots \alpha_k a u \\
	h(b) &= \alpha_1\cdots \alpha_k b v
	\end{align*}
 (or the other way around when it is crucial for the proof) for some words $u$ and $v$, where $\alpha_i$ is in $\Sigma$ for $i \in \{1,\ldots,k\}$. Notice that $\alpha_k=a$ as $bb$ is not allowed as a factor.
Before going into the proof, we make an observation. Let $x \in \operatorname{Pref}(h)$ and $x \notin h(\Sigma)\cup \{\epsilon \}$. Then, one sees that $f(x)\neq 0$ by applying (D4) to the words $\epsilon$ and $x$.
Suppose that $|u| \ge 2$ and $|v| \ge 1$. As $aaa$ is not a factor of $h(a)$, this gives $u=bu_1$ for some word $u_1$ which is not $\epsilon$. For a similar reason, we have $v=av_1$ for some word $v_1.$ 
Consider the prefixes with length $ k+2$ of $h(a)$ and $h(b)$. They are
\begin{align*}
    V= \alpha_1\cdots \alpha_k ab \\  
    W=\alpha_1\cdots \alpha_k ba
\end{align*} 
respectively. We now rule out the possibility that $W\in h(\Sigma)$. This would imply $f(V)=f(\epsilon)$, contradicting (D4). Therefore, we have $W\not\in h(\Sigma)$ and continue the proof. As the number of $a$'s and $b$'s are the same in $V$ and $W$, one infers that $f(V)=f(W)$, which in turn implies that $V^{\prime}=W^{\prime}$ by $(D4)$, where $VV'=h(a)$ and $WW'=h(b)$. However, the rows of the frequency matrix of $h$ will be identical, making the matrix singular, and this contradicts $(D1)$.
Similarly, the case $|u| \ge 1$ and $|v| \ge 2$ is impossible. If $|u|=|v|=1$ ($u=b$ and $v=a$), then this contradicts $(D1)$ again. This leads us to three cases.\\

\noindent \textit{Case 1.1}: Both $u$ and $v$ are $\epsilon$.
If $k=1$, then $h(a)=aa$ and $h(b)=ab$. Then, we see that $aaa$ is a factor of both $h^3(a)=aaaaaaaa$ and $h^3(b)=aaaaaaab$.
Similarly, if $h(a)=ab$ and $h(b)=aa$, then $aaa$ is a factor of both $h^3(a)=abaaabab$ and $h^3(b)=abaaabaa$.
Now, suppose that $k \ge 2$.
By $(D2)$, we know that $f(h(a))=f(h(b))=0$, which in turn yields that $f(a)=f(b).$ First, we will show that $f(a)=f(b)$ is not zero. Suppose that $f(a)=f(b)=0$.
Then, $f(\alpha_1)=f(\alpha_1\cdots \alpha_k)=0$. 
However, neither $\alpha_1$ and $\alpha_1\cdots \alpha_k$ nor $\alpha_2\cdots \alpha_ka$ and $b$ are equal. This contradicts $(D4)$. Thus, $f(a)$ cannot be zero. In this case $(k+1)f(a)=0$, and hence 
the set $$ f (\operatorname{Pref} (h) )= \{g\in G: g=f(v), v\text{ is a prefix of $h(\sigma_i)$ for some $\sigma_i \in \Sigma$} \} $$ contains the following arithmetic progression  $\{mf(a): m \ge 1\}$ of infinite length. This contradicts $(D3)$.\\

\noindent \textit{Case 1.2}: Suppose that $u$ is not $\epsilon$ but $v=\epsilon.$ As $\alpha_k=a$, one obtains that $h(a)=\alpha_1\cdots \alpha_k ab u_1$ and $h(b)=\alpha_1\cdots \alpha_k b$ for some word $u_1$ and $u=bu_1.$ Assume that $\alpha_1=b$. Then, both $h(a)$ and $h(b)$ start with $ba$. This yields that $bb$ is a factor of both $h^3(a)$ and $h^3(b)$. Thus, we may suppose that $\alpha_1=a$. Since $f(h(b))=0$, one has that 
$f(\alpha_1\cdots \alpha_k ab)=f(a)=f(\alpha_1)$.
However, neither $\alpha_1\cdots \alpha_k ab$ and $a$ nor $u_1$ and $\alpha_2\cdots \alpha_k abu_1$ are equal. This contradicts $(D4)$ when $u_1\neq \epsilon$. If $u_1=\epsilon$, then $f(a)=0$, contradicting the observation.
\\

\noindent \textit{Case 1.3}: Suppose that $v$ is not $\epsilon$ but $u=\epsilon.$ Then, $h(a)=\alpha_1\cdots \alpha_k a$ and $h(b)=\alpha_1\cdots \alpha_k bav_1$ for some word $v_1$. If $\alpha_1=b$, then $f(\alpha_1)=f(b)=f(\alpha_1\cdots \alpha_k ba)$ as $f(h(a))=0$. This contradicts $(D4)$ as in the previous case. Thus, $\alpha_1$ must be $a$. Suppose that $k=1$. Then, $h(a)=aa$ and $h(b)=abav_1$. This yields that $aaa$ is a factor of both $h^3(a)$ and $h^3(b)$. Similarly, if $h(a)=abav_1$ and $h(b)=aa$, then $aaa$ is a factor of both $h^3(a)$ and $h^3(b)$ again. Now, suppose that $k \ge 2$. Then, $h(a)=axaa$ and $h(b)=axabav_1$ for some word $x$, or the other way around. In all cases, we deduce that $aaa$ is a factor of both $h^3(a)$ and $h^3(b)$.\\

Hence, the first letters of $h(a)$ and $h(b)$ must be different.
We are only left with the case
\begin{align*}
	h(x)&=ay_1\\
	h(y)&=bax_1
\end{align*}
	where $\{x,y\}=\{a,b\}$ and $x_1, y_1$ are some words.
 As before, by the observation at the beginning of the proof, we rule out the case $y_1=\epsilon$. Note that if the first entry of $y_1$ is $b$, then we have once again the singularity problem of the matrix by $(D4)$. These constraints lead to   
\begin{align*}
	h(x)&=aau\\
	h(y)&=bav.
	\end{align*}
for some words $u$ and $v$.
Again, we split the proof into several cases.\\

\noindent \textit{Case 2.1}: Suppose that $u$ and $v$ are both the empty word $\epsilon$.
Then, either $h^3(a)=aaaaaaaa$ or $h^3(a)=babaaaba$. In either case, $aaa$ is a factor of $h^3(a)$. Similarly, either 
$h^3(b)=baaaaaaa$ or $h^3(b)=aabaaaba$. In either case, $aaa$ is a factor of $h^3(b)$. \\

\noindent \textit{Case 2.2}: Assume that $u$ is not $\epsilon$ but $v=\epsilon$.
Then, this yields that $u=bu_1$ for some word $u_1.$ Therefore, one sees that both $ab$ and $ba$ are factors of $h^2(a)$ for $h(a) \in \{aabu_1, ba\}$. This in turn yields that $aaa$ is a factor of $h^3(a).$ Similarly, both $ab$ and $ba$ are factors of $h^2(b)$ for $h(b) \in \{aabu_1, ba\}$. Hence, $aaa$ is a factor of $h^3(b).$ \\

\noindent \textit{Case 2.3}: Suppose that $u=\epsilon$ and $v$ is not $\epsilon$. Then, either $h^3(a)=aaaaaaaa$ or $h^3(a)=bavbavaabavh(v)h^2(v)$.
In the first case, $aaa$ is a factor of $h^3(a)$. Now, assume that $h(a)=bav$ and so $h^3(a)=bavbavaabavh(v)h^2(v)$.
Write $v=v_1\cdots v_n$ where $n \ge 1$ and $v_i \in \{a,b\}$ for $i \in \{1,\ldots,n\}$. If $v_n=b$, then $bb$ is a factor of
$h^3(a)$, and if $v_n=a$, then $aaa$ is a factor of
$h^3(a)$. 
Similarly, either $h^3(b)=bavaah(v)aaaah^2(v)$ or
$h^3(b)=aabavh(v)aabavh(v)$.
In the first case, $aaa$ is a factor of $h^3(b)$. In the second case, we have $h(b)=aa$ and $h(v)aa$ is a factor of
$h^3(b)$, and so $h(v_n)aa$ is a factor of $h^3(b)$. If $v_n=a$, then $h(v_n)aa=bavaa=bav_1\cdots v_naa$ and $aaa$ is a factor of it. If $v_n=b$, then $h(v_n)aa=aaaa$ and thus $aaa$ is a factor of it.\\

\noindent \textit{Case 2.4}: Assume that both $u$ and $v$ are not $\epsilon.$ Then, $u=bu_1$ for some word $u_1$. Using $(D1)$ and $(D4)$ again, one infers that $v=bv_1$ for some word $v_1$. First, suppose that $h(a)=aabu_1$ and $h(b)=babv_1$.
Then, one arrives at 
$h^3(a)=aabu_1aabu_1babv_1v_2$
for some word $v_2$. Write $u_1=w_1\cdots w_n$. 
If $u_1=\epsilon$, or one of the first and the last letter of $u_1$ is $b$, then we obtain that $bb$ is a factor of $bu_1b$, so it is also a factor of $h^3(a)$. If not, then the last letter of $u_1$ is $a$, and this leads to the fact that $aaa$ is a factor of $u_1aa$ and we are done again.
Similarly, in this situation, one sees that
$h^3(b)=babv_1aabu_1babv_1h(v_1)aabu_1aabu_1v_3$ for some word $v_3$. Consider the factors $bu_1b$ and $u_1aa$ of $h^3(b)$ again. By the previous analysis, we conclude that $aaa$ or $bb$ is a factor of $h^3(b)$.
Next, assume that $h(a)=babv_1$ and $h(b)=aabu_1$.
This gives $h^3(a)=babv_1babv_1aav_4$ for some word $v_4$. Similar to the previous argument, as $bv_1b$ and $v_1aa$ are both factors of $h^3(a)$, we are done. Finally, note that
$$h^3(b)=aabu_1babv_1aabu_1h(v_1)aabu_1babv_1aabu_1h(v_1)babv_1babv_1aav_5$$
for some word $v_5$. Once again, as 
$bv_1b$ and $v_1aa$ are both factors of $h^3(b)$, the proof is now complete.
\end{proof}

Now, we will prove analogs of Theorem \ref{4 consecutive a 6AP} and Theorem \ref{3 consecutive a 9AP} by applying Dekking's theorem (Theorem \ref{dekkinglemma}), therefore one can also prove them by Theorem \ref{our result} via Theorem \ref{extdek}.
We will first prove an analog of Theorem \ref{4 consecutive a 6AP} using Dekking's theorem  with the same abelian power, but the word $aaaaa$ is a factor this time. Recall that in Theorem \ref{4 consecutive a 6AP}, the word $aaaaa$ is not a factor. 

\begin{theorem}\label{5 consecutive a 6AP}
    Dekking's result yields that there exists an abelian $6$-power free morphism $h$ over the binary alphabet $\Sigma= \left\lbrace a,b\right\rbrace  $ with a fixed point $\Omega_1'$ such that
	\begin{itemize}
		\item $ aaaaaa $ is not a factor of $ \Omega^{\prime}_1 $  (there are at most $5$ consecutive $a$'s in $ \Omega^{\prime}_1 $),
		\item $ bb $ is not a factor of $ \Omega^{\prime}_1 $  (there are no two consecutive $b$'s in $ \Omega^{\prime}_1 $),
		\item $ \Omega^{\prime}_1 $ is abelian $ 6 $-power free.		
	\end{itemize}
	\end{theorem}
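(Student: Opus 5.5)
The plan is to exhibit the explicit morphism from Table \ref{label d1},
\begin{align*}
h(a) = ababa, \qquad h(b) = aaaaba,
\end{align*}
and to verify conditions $(D1)$--$(D4)$ of Theorem \ref{dekkinglemma} for a suitable finite abelian group $G$ and monoid morphism $f$. Since $h(a)$ begins with $a$ and $|h(a)|>1$, the fixed point $\Omega_1'=h^{\omega}(a)$ exists. Abelian $6$-power freeness of $h$ then gives abelian $6$-power freeness of $\Omega_1'$, because every prefix $h^m(a)$ is the image of an abelian $6$-power free word.

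For $(D1)$, the frequency matrix has rows $(3,2)$ and $(5,1)$, so its determinant is $-7\neq 0$. Following the pattern of Theorem \ref{4 consecutive a 6AP}, I take $G=\mathbb{Z}^2/G_h\cong \mathbb{Z}/7\mathbb{Z}$. Solving $3+2t\equiv 0$ and $5+t\equiv 0 \pmod 7$ gives $t=2$, so I define $f(w)=|w|_a+2|w|_b \bmod 7$. Then $f(h(a))=3+4=7\equiv 0$ and $f(h(b))=5+2=7\equiv 0$, which is $(D2)$.

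For $(D3)$, I list the $f$-values of the prefixes.
\begin{itemize}
\item Prefixes of $ababa$: $\epsilon,a,ab,aba,abab,ababa$ have values $0,1,3,4,6,0$.
\item Prefixes of $aaaaba$: $\epsilon,a,aa,aaa,aaaa,aaaab,aaaaba$ have values $0,1,2,3,4,6,0$.
\end{itemize}
Hence $A=\{\overline0,\overline1,\overline2,\overline3,\overline4,\overline6\}$ misses only $\overline5$. A non-trivial $7$-term progression in $\mathbb{Z}/7\mathbb{Z}$ covers the whole group, so $\operatorname{a-rk}(A)=6$.

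For $(D4)$, I compare proper prefixes (excluding the full images) that share an $f$-value and check that they are equal or have equal complementary suffixes.
\begin{itemize}
\item Value $0$: both are $\epsilon$.
\item Value $1$: both are $a$.
\item Value $2$: only $aa$ occurs.
\item Value $3$: $ab$ and $aaa$ both have suffix $aba$.
\item Value $4$: $aba$ and $aaaa$ both have suffix $ba$.
\item Value $6$: $abab$ and $aaaab$ both have suffix $a$.
\end{itemize}
Within a single image the values of proper prefixes are distinct, so any $r$-tuple with a common value satisfies one of the two alternatives. Theorem \ref{dekkinglemma} then shows that $h$ is abelian $6$-power free.

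For the factor conditions, both images begin and end with $a$ and contain no $bb$, so no $bb$ appears in $h(a)$, $h(b)$, or across a junction of two images. For runs of $a$'s, the longest run inside an image is $aaaa$ in $h(b)$. Every image ends in $ba$, so at most one $a$ precedes the start of an $h(b)$ block. Thus runs have length at most $5$, and $aaaaaa$ is never a factor; this follows by induction on $h^m(a)$. I expect the main care point to be the $(D4)$ bookkeeping together with this junction analysis, though both are finite checks.
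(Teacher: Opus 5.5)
Your proposal is correct and follows the paper's proof: the same morphism $h(a)=ababa$, $h(b)=aaaaba$, the same $f$ into $\mathbb{Z}/7\mathbb{Z}$ with $f(a)=\overline{1}$, $f(b)=\overline{2}$, the same prefix values $A=\{\overline{0},\overline{1},\overline{2},\overline{3},\overline{4},\overline{6}\}$ for (D1)--(D4), and the same local check of the images and their junctions for the $bb$ and $a^6$ conditions. You spell out the (D4) bookkeeping (equal complements $aba$, $ba$, $a$ for the values $\overline{3}$, $\overline{4}$, $\overline{6}$) and the run-length junction argument in more detail than the paper, which simply asserts $h$-injectivity and inspects $h(aa)$, $h(ab)$, $h(ba)$.
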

 
\begin{proof}
	Define $h:\Sigma^*\to \Sigma^*$ by
	\begin{align*}
	h(a) &= ababa\\
	h(b) &= aaaaba.
	\end{align*}
	The frequency matrix of $h$ is $$ \begin{pmatrix}
	3 &2\\5& 1
	\end{pmatrix} $$
	and it is non-singular. Choosing $G= \Z /  7 \Z$, and defining $f:\Sigma^*\to G$ as
	$$f(a)=\overline{1}, \ f(b)=\overline{2} $$
	in Theorem \ref{dekkinglemma}, we get that $f(h(a))=f(h(b))=\overline{0}$.
	One sees that $f$ takes the following values when evaluated at the prefixes:	
	\begin{align*}
	\{f(\epsilon), f(a),f(ab), f(aba) ,f(abab), f(ababa)\} &=\{\overline{0}, \overline{1},\overline{3},\overline{4},\overline{6}\},\\
	\{f(\epsilon), f(a),f(aa), f(aaa) ,f(aaaa), f(aaaab) , f(aaaaba)\} &=\{\overline{0}, \overline{1},\overline{2},\overline{3},\overline{4},\overline{6}\}.
	\end{align*}
	This indicates that $f $ is $h$-injective. The set 
	$$
	A=\{g\in G: g=f(V), \ V \in \operatorname{Pref}(h) \}=\{\overline{0}, \overline{1},\overline{2},\overline{3},\overline{4},\overline{6}\} 
	$$
	does not contain a non-trivial 7-AP. By Theorem \ref{dekkinglemma}, the morphism $h$ is abelian 6-power free, and the infinite fixed point $\Omega'_1$ of $h$ obtained by repeatedly applying $h$ to $a$ satisfies the last condition of the theorem. 
	To check the first and the second condition, just note that in $h(a)$ and $h(b)$ there are no $aaaaaa$ and $bb$ as factors, and $h(aa)$, $h(ab) $, $h(ba)$ do not produce the factors $aaaaaa$ and $bb$ as well. 
\end{proof}

\begin{example} One might also consider the following morphism $h$ to prove the previous result using Dekking's theorem:
    \begin{align*}
		h(a) &= aaabaa\\
		h(b) &= babaa,
		\end{align*}
		with $G=\Z/7\Z$, and $ f(a)=\overline{1}$, $f(b) =\overline{2}$.  
\end{example}
Next, we will prove analogs of Theorem \ref{4 consecutive a 6AP} and Theorem \ref{3 consecutive a 9AP} (see Theorem \ref{4 consecutive a 8AP} and Theorem \ref{3 consecutive a 11AP}) using Dekking's theorem (Theorem \ref{dekkinglemma}) with a higher abelian power, respectively. Recall that in Theorem \ref{4 consecutive a 6AP}, the infinite word $\Omega_1$ is abelian 6-power free, while in Theorem \ref{3 consecutive a 9AP}, the infinite word $\Omega_2$ is abelian 9-power free. It is an intriguing question to show that the upper bounds below are best possible that can be obtained using Dekking's theorem with the given constraints, and we present these questions in the concluding section. 
\begin{theorem}\label{4 consecutive a 8AP}
    Dekking's result yields that there exists an abelian $9$-power free morphism $h$ over the binary alphabet $\Sigma= \left\lbrace a,b\right\rbrace  $ with a fixed point $\Omega_1''$ such that
	\begin{itemize}
		\item $ aaaaa $ is not a factor of $\Omega^{\prime \prime}_1$ (there are at most $4$ consecutive $a$'s in $\Omega^{\prime \prime}_1$),
		\item $ bb $ is not a factor of $\Omega^{\prime \prime}_1$ (there are no two consecutive $b$'s in $\Omega^{\prime \prime}_1$),
		\item $\Omega^{\prime \prime}_1$ is abelian $ 9 $-power free.		
	\end{itemize}
	\end{theorem}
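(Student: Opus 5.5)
The plan is to apply Dekking's theorem (Theorem \ref{dekkinglemma}) with $n=9$ to the morphism listed in Table \ref{label d1},
\begin{align*}
h(a) &= aaabaaaba\\
h(b) &= babaaaba,
\end{align*}
with $G=\Z/11\Z$. I then take $\Omega''_1=h^{\omega}(a)$, which exists because $h(a)$ begins with $a$ and $|h(a)|>1$.

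First, condition (D1). We have $\psi(h(a))=(7,2)$ and $\psi(h(b))=(5,3)$, so the frequency matrix has determinant $21-10=11\neq 0$. This suggests choosing $G=\Z/11\Z$. For (D2) I need $7f(a)+2f(b)\equiv 0$ and $5f(a)+3f(b)\equiv 0 \pmod{11}$. Taking $f(a)=\overline{1}$ gives $f(b)=\overline{2}$, and both congruences hold.

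Next, the prefix values for (D3). The prefixes of $h(a)$ take the values $\overline{0},\overline{1},\overline{2},\overline{3},\overline{5},\overline{6},\overline{7},\overline{8},\overline{10}$ (and $\overline{0}$ again at the full word). The prefixes of $h(b)$ take the values $\overline{0},\overline{2},\overline{3},\overline{5},\overline{6},\overline{7},\overline{8},\overline{10}$. Hence $A=\Z/11\Z\setminus\{\overline{4},\overline{9}\}$. A non-trivial AP in $\Z/11\Z$ has difference generating the group, so its first $11$ terms are all distinct. An AP of length $10$ therefore omits at most one element of $\Z/11\Z$, so it cannot avoid both $\overline{4}$ and $\overline{9}$. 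Thus $\operatorname{a-rk}(A)\le 9$, which is (D3) for $n=9$.

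The main check is (D4). Within a single image $h(\sigma)$, the proper nonempty prefixes have pairwise distinct $f$-values, and none of them has value $\overline{0}$. So coincidences can only occur between a prefix of $h(a)$ and a prefix of $h(b)$. The key observation is the factorization
\[
h(a)=aa\cdot abaaaba, \qquad h(b)=b\cdot abaaaba,
\]
with $f(aa)=f(b)=\overline{2}$. Consequently a prefix $aa\,w$ of $h(a)$ and the prefix $b\,w$ of $h(b)$, for $w$ a proper prefix of $abaaaba$, have equal $f$-values and equal complementary suffixes. This accounts for all shared values. The only remaining value is $f(a)=\overline{1}$, which is attained only in $h(a)$. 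So whenever $f(v_1)=\cdots=f(v_r)$, all the complements $v_i'$ coincide, and (D4) holds. Theorem \ref{dekkinglemma} then shows that $h$ is abelian $9$-power free. Since $a$ is abelian $9$-power free, every $h^m(a)$ is as well, and hence so is $\Omega''_1$.

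Finally, the factor conditions, checked as in Theorem \ref{4 consecutive a 6AP}. Inside $h(a)$ and $h(b)$ there is no $bb$, and the longest run of $a$'s has length $3$. Both images end in $ba$, so at every junction the left side contributes a single $a$. Therefore no $bb$ can occur across a junction. Appending $h(a)$, which starts with $aaab$, gives a run of at most $1+3=4$ $a$'s, and appending $h(b)$ gives a run of a single $a$. So $aaaaa$ is never a factor. Since $\Omega''_1$ is a concatenation of images $h(\sigma)$, it avoids both $bb$ and $aaaaa$.
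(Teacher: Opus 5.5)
Your proposal is correct and follows the paper's proof: it uses the same morphism $h(a)=aaabaaaba$, $h(b)=babaaaba$, the same $G=\Z/11\Z$ with $f(a)=\overline{1}$, $f(b)=\overline{2}$, Theorem~\ref{dekkinglemma} with $n=9$, and the same junction check ruling out $bb$ and $aaaaa$. Your common-suffix factorization $h(a)=aa\cdot abaaaba$, $h(b)=b\cdot abaaaba$ and your argument that a non-trivial $10$-AP in $\Z/11\Z$ omits only one residue are tidier justifications of the (D4) and (D3) checks that the paper simply tabulates. One small point: for the value $\overline{0}$ all $v_i=\epsilon$, so it is the first alternative of (D4) that applies there, not equality of complements.
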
    
\begin{proof}
	Define $h:\Sigma^*\to \Sigma^*$ by
	\begin{align*}
	h(a) &= aaabaaaba\\
	h(b) &= babaaaba.
	\end{align*}
	The frequency matrix of $h$ is 
 $$\begin{pmatrix}
	7 & 2\\5& 3
	\end{pmatrix}$$ which is non-singular. Choosing $G= \Z /  11 \Z$, and defining $f:\Sigma^*\to G$ as
	$$f(a)=\overline{1}, \ f(b)=\overline{2} $$
	in Theorem \ref{dekkinglemma}, we get that $f(h(a))=f(h(b))=\overline{0}$.
	Notice that $f$ takes the following values when evaluated at the prefixes:
	\begin{align*}
	f (\{\epsilon, a, aa, aaa, aaab, aaaba,aaabaa,aaabaaa, aaabaaab, aaabaaaba \}) \\
	=\{\overline{0}, \overline{1},\overline{2},\overline{3},\overline{5},\overline{6},\overline{7},\overline{8},\overline{10} \},\\
	f (\{\epsilon, b,ba,bab, baba,babaa,babaaa,babaaab,babaaaba \}\\ =\{\overline{0}, \overline{2},\overline{3},\overline{5},\overline{6},\overline{7},\overline{8},\overline{10}\}).
	\end{align*}
	This yields that $f $ is $h$-injective. The set 
	\begin{align*}
	A=&\{g\in G: g=f(V),\  V \in \operatorname{Pref}(h) \}\\
	=&	\{\overline{0}, \overline{1},\overline{2},\overline{3},\overline{5},\overline{6},\overline{7},\overline{8},\overline{10} \}  
	\end{align*}
	has 9 elements, and one of the longest non-trivial arithmetic progressions contained in $A$ is the following one
 $$ \overline{3},\overline{8}, \overline{2}, \overline{7}, \overline{1}, \overline{6}, \overline{0}, \overline{5}, \overline{10}.$$
 Hence, $A$
	is arithmetic progression free of length $10$ in $G$. By Theorem \ref{dekkinglemma}, the morphism $h$ is abelian 9-power free, and the infinite fixed point $\Omega_1''$ of $h$ obtained by repeatedly applying $h$ to $a$ satisfies the last condition of the result. 
Other conditions can be checked as in Theorem \ref{5 consecutive a 6AP}.
\end{proof}

\begin{example}
	One may also consider the following morphism $h$ to prove the previous result via Dekking's theorem:
	\begin{align*}
	h(a) &= ababaaab\\
	h(b) &= aaaabaaab,
	\end{align*}
	with  $G=\Z/11\Z$, and $ f(a)=\overline{1}$, \ $f(b) =\overline{2}$. 
\end{example}

\begin{theorem}\label{3 consecutive a 11AP}
    Dekking's result yields that there exists an abelian $11$-power free morphism $h$ over the binary alphabet $\Sigma= \left\lbrace a,b\right\rbrace  $ with a fixed point $\Omega_2'$ such that
	\begin{itemize}
		\item $ aaaa $ is not a factor of $ \Omega^{\prime}_2 $  (there are at most $3$ consecutive $a$'s in $ \Omega^{\prime}_2 $),
		\item $ bb $ is not a factor of $ \Omega^{\prime}_2 $  (there are no two consecutive $b$'s in $ \Omega^{\prime}_2 $),
		\item $ \Omega^{\prime}_2 $ is abelian $ 11 $-power free.		
	\end{itemize}
	\end{theorem}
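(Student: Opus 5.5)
We apply Dekking's theorem (Theorem \ref{dekkinglemma}) to the morphism from Table \ref{label d1},
\begin{align*}
h(a) &= abaaabaaaba,\\
h(b) &= ababababa,
\end{align*}
in the same way as in Theorems \ref{5 consecutive a 6AP} and \ref{4 consecutive a 8AP}. The frequency matrix is $\begin{pmatrix} 8 & 3\\ 5 & 4\end{pmatrix}$. Its determinant is $17$, so it is non-singular and (D1) holds. This suggests taking $G=\Z/17\Z$. We look for $f(a)=\overline{1}$ and $f(b)=\overline{t}$ with $8+3t\equiv 0$ and $5+4t\equiv 0 \pmod{17}$. The choice $t=3$ satisfies both congruences, so (D2) holds.

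Next we list the values of $f$ on prefixes, ordered by prefix length. For $h(a)$ they are $\overline{0},\overline{1},\overline{4},\overline{5},\overline{6},\overline{7},\overline{10},\overline{11},\overline{12},\overline{13},\overline{16},\overline{0}$. For $h(b)$ they are $\overline{0},\overline{1},\overline{4},\overline{5},\overline{8},\overline{9},\overline{12},\overline{13},\overline{16},\overline{0}$.

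To check $h$-injectivity (D4), first note that the values on the non-empty proper prefixes are pairwise distinct within each image and are never $\overline{0}$. It therefore suffices to compare proper prefixes of $h(a)$ and $h(b)$ that share a value.
\begin{itemize}
\item The values $\overline{1},\overline{4},\overline{5}$ come from the identical prefixes $a, ab, aba$.
\item The values $\overline{12},\overline{13},\overline{16}$ come from prefixes whose complementary suffixes are $aba$, $ba$, $a$ in both images.
\end{itemize}
So in every coincidence either $v_1=\cdots=v_r$ or $v_1'=\cdots=v_r'$, and (D4) follows.

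For (D3) we need
\[
A=\{\overline{0},\overline{1},\overline{4},\overline{5},\overline{6},\overline{7},\overline{8},\overline{9},\overline{10},\overline{11},\overline{12},\overline{13},\overline{16}\},
\]
whose complement in $\Z/17\Z$ is $\{\overline{2},\overline{3},\overline{14},\overline{15}\}$, to have $\operatorname{a-rk}(A)\le 11$. This is the main obstacle. Since $17$ is prime, a non-trivial progression of difference $d$ runs along the cycle $0,d,2d,\ldots,16d$. Its length inside $A$ is therefore bounded by the longest run of the $d$-ordered cycle that avoids the four excluded residues. Only $d=1,\dots,8$ need checking, since $-d$ gives the same cycle.

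For each such $d$ we locate the multiples $2d^{-1},3d^{-1},14d^{-1},15d^{-1}$ on the cycle and verify that some cyclic gap between consecutive excluded positions is at most $11$ elements long. We expect the maximum over all $d$ to be exactly $11$, achieved by a progression analogous to the one exhibited in Theorem \ref{4 consecutive a 8AP}. We will exhibit it explicitly to show the bound is sharp. Then Theorem \ref{dekkinglemma} shows that $h$ is abelian $11$-power free, and so is its fixed point $\Omega_2'=h^\omega(a)$.

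Finally, the absence of $aaaa$ and $bb$ is checked as in Theorem \ref{5 consecutive a 6AP}.
\begin{itemize}
\item Neither $h(a)$ nor $h(b)$ contains $aaaa$ or $bb$.
\item Both images begin with $ab$ and end with $a$.
\item Hence $h(aa)$, $h(ab)$ and $h(ba)$ only create the junction factors $aab$ and $aab$, and never produce $aaaa$ or $bb$.
\end{itemize}
Since $\Omega_2'$ is generated from $a$ by iterating $h$, and $a$, $ab$, $ba$, $aa$ are the only length-two factors that can occur, induction shows that the two forbidden factors never appear.
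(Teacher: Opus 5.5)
Your proposal is correct and is essentially the paper's own proof. You use the same morphism $h(a)=abaaabaaaba$, $h(b)=ababababa$, the same $G=\Z/17\Z$ with $f(a)=\overline{1}$, $f(b)=\overline{3}$, the same prefix values and set $A$, and the same junction check for $aaaa$ and $bb$; your (D4) verification is more explicit than the paper's one-line assertion.

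One wording slip in (D3) needs fixing: you need \emph{every} cyclic gap (i.e.\ the longest run) to have at most $11$ elements, not \emph{some}. Carrying out the check gives longest runs $10,5,6,9,8,11,4,7$ for $d=1,\dots,8$. Hence $\operatorname{a-rk}(A)=11$, attained with $d=\overline{6}$ by $\overline{4},\overline{10},\overline{16},\overline{5},\overline{11},\overline{0},\overline{6},\overline{12},\overline{1},\overline{7},\overline{13}$.
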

\begin{proof}
	Define $h:\Sigma^*\to \Sigma^*$ by
	\begin{align*}
	h(a) &= aba aabaa aba\\
	h(b) &= aba bab aba.
	\end{align*}
	The frequency matrix of $h$ is $$\begin{pmatrix}
	8 & 3\\5& 4
	\end{pmatrix}$$ which is non-singular. Choosing $G= \Z /  17 \Z$, and defining $f:\Sigma^*\to G$ as
	$$ f(a)=\overline{1}, \ f(b)=\overline{3} $$
	in Theorem \ref{dekkinglemma}, we get that $f(h(a))=f(h(b))=\overline{0} $.
 We observe that
	$f$ assumes the following values when evaluated at the prefixes:
	\begin{align*}
	f (\{V: V \text{ is a prefix of $h(a)$}\})&=\{\overline{0}, \overline{1}, \overline{4}, \overline{5}, \overline{6}, \overline{7}, \overline{10}, \overline{11}, \overline{12}, \overline{13}, \overline{16}\}, \\
	f (\{V: V \text{ is a prefix of $h(b)$}\})&=\{\overline{0}, \overline{1}, \overline{4}, \overline{5}, \overline{8}, \overline{9}, \overline{12}, \overline{13}, \overline{16}\}.
	\end{align*}
	This gives that $f $ is $h$-injective. The set 
	\begin{align*}
	A=&\{g\in G: g=f(V), \ V \in \operatorname{Pref}(h)\}\\
	=&\{\overline{0}, \overline{1}, \overline{4}, \overline{5}, \overline{6}, \overline{7}, \overline{8}, \overline{9}, \overline{10}, \overline{11}, \overline{12}, \overline{13}, \overline{16}\}  
	\end{align*}
	has 13 elements, and the longest non-trivial arithmetic progression in $A$ is of length $11$.
	By Theorem \ref{dekkinglemma}, the morphism $h$ is abelian 11-power free, and the infinite fixed point $\Omega_2'$ of $h$ obtained by repeatedly applying $h$ to $a$ satisfies the last condition of the result. Other conditions can be checked as in Theorem \ref{5 consecutive a 6AP}.
\end{proof}
\section{A Sieve Technique for the Template Method} \label{Sieve Section}
The template method provides a framework to decide whether a morphic word is abelian power-free. This method is grounded in the observation that a word is an abelian $k$-power if and only if it realizes the following $k$-template 
$$ T_k = \left[ \epsilon, \ldots, \epsilon, \vec{0}, \ldots, \vec{0} \right] .$$
By  analyzing ancestor templates and their realizations by certain factors, the template method ascertains the existence of abelian $k$-powers. When using the template method, the number of ancestors and the number of factors of a certain length can become prohibitively large, even for relatively small values of $k$. To make this approach practical, we apply a sieve technique that dramatically reduces the number of ancestors and candidate instances and it makes the method computationally tractable. This section begins with a detailed explanation of the template method and demonstrates how the sieve technique is applied to specific morphisms. At the end of this section, we provide an overview of morphisms and abelian $k$-power freeness.\\

Now, we give definitions of templates, realizations (referred to as instances in \cite{C-R2012}), parents, and ancestors, as well as their roles in the decision process. We refer the reader to \cite{C-R2012, Rao-R} for more details. Throughout this section, $\Sigma$ denotes a finite alphabet of size $m$. 

\begin{definition}
	For any integer $ k \ge 2 $, a k-template is a finite sequence of the form
	$$
	t = [a_1, a_2, \dots, a_{k+1}, d_1, d_2, \dots, d_{k-1}],
	$$
	where $a_i \in \Sigma \cup \{\epsilon\}$ and $d_j \in \mathbb{Z}^m$ for $i\in\{1,\ldots, k+1\} ,$ $j \in \{1,\ldots,k-1\}$. 
\end{definition}

A template provides a blueprint for a pattern in a fixed point. Each $a_i$ determines the letters at certain positions in the realization, while the vectors $d_i$'s describe the differences in Parikh vectors between consecutive segments of a word.

\begin{definition}
	A word $w$ is said to be a realization of the template $$t = [a_1, a_2, \dots, a_{k+1}, d_1, d_2, \dots, d_{k-1}]$$ if there exist factors $X_1, X_2, \dots, X_k \in \Sigma^*$ of $w$ such that:
	$$ w = a_1 X_1 a_2 X_2 \dots a_k X_k a_{k+1},$$
	and for each $i = 1, 2, \dots, k-1$,
	$$\psi(X_{i+1}) - \psi(X_i) = d_i.$$
\end{definition}

For example, consider the 2-template $t = [a, b, \epsilon, (1, -2)]$ on the alphabet $ \left\lbrace a,b\right\rbrace  $. A word realizing $t$ must have the form $a X_1 b X_2$, where the difference between the Parikh vectors of $X_2$ and $X_1$ is $(1, -2)$. The word $w = abbba$ realizes this template because the Parikh vectors of $a$ and $bb$ differ by $(1, -2)$.

\begin{definition}
	A parent of the template $t = [a_1, a_2, \dots, a_{k+1}, d_1, d_2, \dots, d_{k-1}]$ under a morphism $h$ is another template $t_p = [A_1, A_2, \dots, A_{k+1}, D_1, D_2, \dots, D_{k-1}]$ such that:
	$$h(A_i) = p'_i a_i p''_i,$$
    for some words $p'_i, p''_i \in \Sigma^*$ with $i\in \{1,\ldots, k+1\}$, and the Parikh difference vectors satisfy
	$$	\psi(p''_{j+1} p'_{j+2}) - \psi(p''_j p'_{j+1}) + D_j M = d_j$$
for $j \in \{1,\ldots,k-1\}$,
where $M$ is the frequency matrix of the morphism $h$.
\end{definition}
The concept of a parent allows us to trace back how a template is formed under the action of the morphism. 
If a word $w$ realizes a parent template $t_p$, then $h(w)$ contains a factor that realizes the template $t$. Now, we quickly prove this (see \cite{C-R2012}).
	Let $w = A_1 Y_1 A_2 Y_2 \dots A_k Y_k A_{k+1}$ be a realization of the parent template $t_p$. By applying the morphism $h$ to $w$, the resulting word $h(w)$ has the form
	$$h(w) = p'_1 a_1 p''_1 h(Y_1) p'_2 a_2 p''_2 \dots p'_k a_k p''_k h(Y_k) p'_{k+1} a_{k+1} p''_{k+1},$$
	which contains the factor $a_1 X_1 a_2 X_2 \dots a_k X_k a_{k+1}$, where $X_i = p''_i h(Y_i) p'_{i+1}$. The Parikh vector conditions ensure that this factor realizes the template $t$.

\begin{definition}  
    Let $t_1, t_2, \dots, t_n$ be a sequence of templates where each $t_{i+1}$ is a parent of $t_i$ for $i\in\{1,\ldots,n-1\}$. Then, the template $t_n$ is called an ancestor of $t_1$. The set of all ancestors of $t$ is denoted by $\operatorname{Anc}(t)$.
\end{definition}

The set of ancestors of the template $T_k$ is finite when the operator norm of $M^{-1}$ is strictly less than 1 where $M$ is the frequency matrix of $h$. This is crucial in the implementation of the template method, as it guarantees that only finitely many templates are examined to decide abelian power-freeness. However, even if the operator norm of $M^{-1}$ is larger than 1, in which case $\operatorname{Anc}(t)$ may be infinite,  it is still possible to use a generalization of the template method introduced in  \cite{Rao-R}.\\

The \emph{Inverse Parent Lemma} in \cite{C-R2012} states that if a factor $u$ of a fixed point of a morphism $h$ has length greater than 
\begin{align*}
N + k - 1 + (k - 1)(N - 2 + mk\Delta),\end{align*} 
where $ N=\max\limits_{\sigma \in \Sigma }|h(\sigma)| $,   $ m=|\Sigma| $ and $ \Delta = \lfloor \max_i \|d_i\|_2 \rfloor $, and $u$ realizes a template $t_1$, then there exists a parent template $t_2$ of $t_1$ which has a realization of length less than $|u|$. Thus the analysis can be reduced to factors of bounded length: to determine whether a fixed point is abelian power-free,  it suffices to check that no templates $t\in\operatorname{Anc} (T_k)$ 
where 
$$ T_k = \left[ \epsilon, \ldots, \epsilon, \vec{0}, \ldots, \vec{0} \right] $$
are realized by any factor of length less than
\begin{align} 
    N + k - 1 + (k - 1)(N - 2 + mk\Delta), \label{search bound}
\end{align}
where $ \Delta = \lfloor \max_i \|D_i\|_2 \rfloor $. 
As an application of the template method, Currie and Rampersad \cite{C-R2012} computed the ancestors of the template $T_3$ with respect to the morphism $h$ defined on a ternary alphabet $\Sigma=\{a,b,c\}$ by $h(a)=aabc$, $h(b)=bbc$, $h(c)=acc$. Then, by checking the word length specified in equation \eqref{search bound}, Currie and Rampersad concluded that $h^{\omega}(a)$ is abelian 3-power free and hence they reproved Dekking's result from \cite{dekking}. {In the same spirit, we verified that the template $T_4$ has 3123 ancestors with respect to Dekking's morphism $h$ defined by \eqref{dekm}, and we re-concluded that $h^{\omega}(a)$ is abelian 4-power free.}\\

The bound in \eqref{search bound} for the inverse parent lemma can be large when one of $m$ or $k$ is large enough, and this may limit the applicability of the template method. Thus, we felt the need to give the inverse parent lemma in the following form to enhance its feasibility, and it is vital due to the high powers we aim to investigate (see Proposition \ref{SievingT8} and Theorem \ref{T14-template}). 

\begin{lemma} \label{new search bound}
Let $h:\Sigma^*\to \Sigma^*$ be a morphism with
 $ N=\max\limits_{\sigma \in \Sigma }|h(\sigma)| $ and  $ m=|\Sigma|$.
Suppose that
\begin{itemize}
    \item $ h(\sigma_1)=\sigma_1x, $ for some non-empty word $ x $ in $ \Sigma^*, $
	\item $|h(\sigma_i)| > 1$ for each $ i \in \left\lbrace 1,\dots,m\right\rbrace.$
\end{itemize} 
Assume that $\mathcal{I}$ is a factor of $h^{\omega}(\sigma_1)$ which is an instance of a $k$-template $$t = [a_1, a_2, \dots, a_{k+1}, d_1, d_2, \dots, d_{k-1}],$$
and 
$$
|\mathcal{I}| > N+k-1 + (k-1)\big(N-2 + \max_{i< j \le k}\big|\sum_{r=i}^{j-1} d_r \cdot (1, \ldots, 1)\big|\big).
$$
Then, for some parent $t_1$ of $t$, the infinite word $h^{\omega}(\sigma_1)$ contains a factor $\mathcal{J}$ which is an instance of $t_1$ and $|\mathcal{J}| < |\mathcal{I}|$.
\end{lemma}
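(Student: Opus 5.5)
We follow the proof of the Inverse Parent Lemma of Currie and Rampersad \cite{C-R2012}, changing only the step that bounds the lengths of the blocks. Write $\mathcal{I}=a_1X_1a_2X_2\cdots a_kX_ka_{k+1}$ as a factor of $w=h^{\omega}(\sigma_1)$. Since $w=h(w)$, the word $w$ is a concatenation of blocks $h(\sigma)$. Each occurrence of a letter $a_i$ in $\mathcal{I}$ therefore lies inside a unique block $h(A_i)$, say $h(A_i)=p_i'a_ip_i''$. If $a_i=\epsilon$, we choose the block boundary at that position, with an appropriate convention. The preimages of the intermediate blocks give words $Y_i$ with $p_i''X_i'p_{i+1}'$ structure, and $A_1Y_1A_2\cdots Y_kA_{k+1}$ is a factor of $w$. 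We need these $Y_i$ to exist as genuine words, that is, each $X_i$ must fully contain the portion cut off by the two adjacent blocks. This needs $|X_i|$ large enough; otherwise the cut blocks around consecutive $a_i$ overlap.

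\textbf{Key step: length bound.} This is where the new bound enters. Set $\ell_i=|X_i|$. Since $\psi(X_j)-\psi(X_i)=\sum_{r=i}^{j-1}d_r$, pairing with $(1,\ldots,1)$ gives
\[
\ell_j-\ell_i=\sum_{r=i}^{j-1}d_r\cdot(1,\ldots,1).
\]
Hence all the $\ell_i$ lie within $E:=\max_{i<j\le k}|\sum_{r=i}^{j-1}d_r\cdot(1,\ldots,1)|$ of each other. The original lemma used the cruder bound $mk\Delta$ coming from Cauchy--Schwarz; here the exact length differences are used instead. Also $|\mathcal{I}|\le k+\sum\ell_i\le k+k\min_i\ell_i+(k-1)E$, up to the bookkeeping of which index attains the minimum. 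The hypothesis on $|\mathcal{I}|$ then forces $\min_i\ell_i\ge N-1$ (or more precisely $> N-2$). Each $X_i$ is then long enough to contain the suffix $p_i''$ and the prefix $p_{i+1}'$ with no overlap of the blocks containing $a_i$ and $a_{i+1}$. I expect the main obstacle to be getting the constants exactly right, so that they match $N+k-1+(k-1)(N-2+E)$. This means tracking the $k+1$ letters $a_i$ together with the boundary blocks, and I would mirror the corresponding computation in \cite{C-R2012} line by line.

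\textbf{Parent property and shorter length.} Set $D_j=\psi(Y_{j+1})-\psi(Y_j)$. Then
\[
\psi(X_i)=\psi(p_i'')+\psi(Y_i)M+\psi(p_{i+1}'),
\]
which gives exactly the defining identity
\[
\psi(p_{j+1}''p_{j+2}')-\psi(p_j''p_{j+1}')+D_jM=d_j.
\]
So $t_1=[A_1,\ldots,A_{k+1},D_1,\ldots,D_{k-1}]$ is a parent of $t$, and $\mathcal{J}=A_1Y_1\cdots Y_kA_{k+1}$ is an instance of $t_1$ that occurs in $w$. Finally, $|h(\sigma)|>1$ for every letter, and $\mathcal{I}$ is long enough to contain at least one full block. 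Together with $|h(\mathcal{J})|\ge|\mathcal{I}|$, this yields $|\mathcal{J}|<|\mathcal{I}|$. The first hypothesis, $h(\sigma_1)=\sigma_1x$, is used only to ensure that $w$ is a well-defined fixed point of $h$ and that its block decomposition is valid.
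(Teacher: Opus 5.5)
Your proposal is correct and matches the paper's proof, which likewise only establishes the identity $|X_j|-|X_i|=\sum_{r=i}^{j-1}d_r\cdot(1,\ldots,1)$ and then defers the rest verbatim to Currie and Rampersad's inverse parent lemma. On the constants you were unsure of, two fixes. First, there are $k+1$ letters $a_i$ (not $k$), so $\min_i|X_i|\le N-2$ gives exactly $|\mathcal{I}|\le (k+1)+k(N-2)+(k-1)E=N+k-1+(k-1)(N-2+E)$. Second, for $|\mathcal{J}|<|\mathcal{I}|$ the inequality $|h(\mathcal{J})|\ge|\mathcal{I}|$ points the wrong way; what you need is that each letter of $\mathcal{J}$ contributes at least one letter to $\mathcal{I}$ (taking $A_i=\epsilon$ when an empty $a_i$ sits on a block boundary), while a full block inside $\mathcal{I}$ contributes at least two.
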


The proof of Lemma \ref{new search bound} is similar to that of the inverse parent lemma \cite[Lemma 5]{C-R2012}, so we will be brief.
In the inverse parent lemma, we have the term $mk\Delta$ instead of the sum 
\begin{equation} \label{new sum}
  \max_{i< j \le k}\big|\sum_{r=i}^{j-1} d_r \cdot (1, \ldots, 1)\big|.  
\end{equation}

However, the sum \eqref{new sum} can be smaller than $mk\Delta$, as our vectors may have negative coordinates, so they can cancel each other. As a result, one can get a better search bound for the template method. In any case, the sum \eqref{new sum} is at most $\sum_{r=1}^{k-1}\|d_r\|_1$,
where $\|v\|_1$ denotes the 1-norm for a vector $v$ in $\mathbb R^m.$ Using the Cauchy--Schwarz inequality, we see that 
$$\sum_{r=1}^{k-1}\|d_r\|_1 \le \sqrt{m}\sum_{r=1}^{k-1}\|d_r\|_2 \le \sqrt{m} (k-1)(\Delta +1),$$
where 
$ \Delta = \lfloor \max_i \|d_i\|_2 \rfloor $. Hence, when the size of the alphabet $m$ is large enough, the sum \eqref{new sum} is smaller than $mk\Delta$ even if there is no cancellation.

\begin{proof}

Let $\mathcal{I}$ be an instance of the template $t$, and write 
$$\mathcal{I}=a_1X_1 \cdots a_kX_ka_{k+1},$$
where $\psi(X_{i+1})-\psi(X_i)=d_i$ for $i$ from $\{1, \ldots, k-1 \}$.
For $i < j \le k$, note that $$\psi(X_j)-\psi(X_i)=\sum_{r=i}^{j-1}d_r.$$
For any word $w$, we know that the length $|w|$ of $w$ is equal to the dot product $$\psi(w) \cdot  (1, \ldots, 1).$$ This yields that 
$$
|X_j|-|X_i|=\sum_{r=i}^{j-1} d_r \cdot (1, \ldots, 1).
$$
Therefore, we deduce that

$$\max_{i< j \le k} \big||X_j|-|X_i| \big|
=  \max_{i< j \le k}\big|\sum_{r=i}^{j-1} d_r \cdot (1, \ldots, 1)\big|. 
$$

The rest of the argument is completely identical to that of \cite[Lemma 5]{C-R2012}, so we are done.
\end{proof}

We consider the morphism $ h $ in Theorem~\ref{3 consecutive a 9AP} for which we have established that it is abelian 9-power free. However, it remains unclear whether $ h $ is abelian 8-power-free. In fact, $ h $ is not abelian 8-power-free, as demonstrated by the following example:
$$
h(abba)=abaa\, \underline{ab} \, \underline{a b}\, \underline{ab}\,\underline{ab} \,\underline{ba}\,\underline{ba}\,\underline{b a}\,\underline{ba}\,aaba,
$$
which contains an abelian 8-power. Nevertheless, this does not imply that its fixed point $ h^\omega(a) $ contains an abelian 8-power. Therefore, it is necessary to test whether $ h^\omega(a) $ avoids abelian 8-powers. By analyzing the first 11039 letters of $ h^\omega(a) $, we find no occurrence of an abelian 8-power. The frequency matrix $ M $ of $ h $ is 
$$ \begin{pmatrix}
5 &  2 \\
2 &  3
\end{pmatrix} $$ and it is non-singular as given before. The operator norm of the matrix $ M^{-1} $ is $$ \frac{1}{11}\left( 4+\sqrt{5}\right) \approx 0.56 < 1 $$ by the well-known conclusion in linear algebra that the operator norm of a symmetric matrix is the largest absolute value of its eigenvalues (or the largest singular value). Therefore, we can apply the template method (Theorem \ref{templatem}) to decide whether $ h^\omega (a)$ is abelian power-free.\\

When the template method is applied to this morphism, it yields approximately two million ancestors. Among these, we compute that the maximal Parikh vector deviation is $ \Delta  = \lfloor \max_i \|D_i\|_2 \rfloor = 2 $. To use the algorithm described by Currie and Rampersad, we computed the search bound in equation \eqref{search bound} as
$$N + k - 1 + (k - 1)(N - 2 + mk\Delta) = 7 + 8 - 1 + 7\cdot (7 - 2 + 2 \cdot 8 \cdot 2) = {273},$$
where $N = \max\{ |h(a)|, |h(b)| \} = 7 $, $k = 8$ and  $m = |\Sigma| = 2 $.
To decide whether $h^{\omega}(a)$ is abelian $8$-power free, we must examine all factors of $ h^\omega(a) $ of length at most $273$. Checking whether these factors realize any of the nearly two million ancestors is beyond our reach with the current version of the code. \\

To reduce the search space, we introduce a sieve technique that synthesizes elements of Dekking’s combinatorial method with the template framework of Currie and Rampersad. Our technique enables an effective structural analysis that drastically reduces the search space. Specifically, we demonstrate that among the 1953162 parents associated with the template $ T_8 $, only 8 of them can possibly be realized in the fixed point (see Figure~\ref{parent figure}). This observation allows us to avoid computing the vast majority of the templates. Thus the verification of whether an abelian 8-power exists becomes computationally feasible.

    \begin{figure}[H]
    \centering
    \begin{tikzpicture}
    \draw[rounded corners,gray!30,fill=gray!30] (-0.4,0) rectangle (0.8,0.6) node[midway] {\textcolor{black!90}{\footnotesize $T_8$}};
    \foreach \i in {1,2,3}{\draw [-,gray] (-0.5+\i*0.1 ,0.7) to (-6+0.6*\i,   2);}
    \foreach \i in {8,9.25,10.5,14,16,18,20}{\draw [-,gray] (-0.6+\i*0.07,0.7) to (-6+0.6*\i,2);}
    \node[gray] (B) at (0.65, 1.3) {$ \cdots $};
    \draw[gray!30,fill=gray!30,rounded corners] (-5.8,2.2) rectangle (-3.7,2.8) node[midway] {\textcolor{black!90}{\footnotesize $8$ templates}};
    \draw[gray!30,fill=gray!30,rounded corners] (-1.9,2.2) rectangle (6.3,2.8) node[midway] {\footnotesize \textcolor{black!90}{$1953154$ templates }};
    \end{tikzpicture}
    \caption{Parents  of $T_8$ in Proposition \ref{SievingT8}}\label{parent figure}
    \end{figure}

\begin{proposition} \label{SievingT8}
 Let $ \Sigma = \{a, b\} $ be a binary alphabet. Then, the fixed point $h^{\omega}(a)$ of the morphism $ h: \Sigma^* \to \Sigma^* $ defined in Theorem~\ref{3 consecutive a 9AP} by
$$h(a) = abaaaba, \quad h(b) = babab$$
contains abelian 8-powers.
\end{proposition}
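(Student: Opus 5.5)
The plan is to prove the statement constructively. I would exhibit an explicit abelian $8$-power in $h^{\omega}(a)$, and use the template machinery only to \emph{locate} it, since a direct scan of the first $11039$ letters finds nothing. The key fact is the forward direction of the parent relation proved before Lemma \ref{new search bound}: if a factor $w$ of $h^{\omega}(a)$ realizes a parent $t_p$ of $t$, then $h(w)$ contains a factor realizing $t$. Since $h(h^{\omega}(a))=h^{\omega}(a)$, that factor of $h(w)$ is again a factor of the fixed point. So it suffices to find one ancestor $t_n\in\operatorname{Anc}(T_8)$ together with a chain $t_n\to t_{n-1}\to\cdots\to T_8$ and a factor $w$ of $h^{\omega}(a)$ realizing $t_n$. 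Applying $h$ exactly $n-1$ times then produces a factor realizing $T_8=[\epsilon,\ldots,\epsilon,\vec 0,\ldots,\vec 0]$, which is an abelian $8$-power. The final certificate is simply this explicit factor, together with the check that its $8$ blocks have equal Parikh vectors.

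To make the search feasible I would first sieve the parents of $T_8$. Suppose $X_1\cdots X_8$ is an abelian $8$-power occurring in $h(w)$. Write the nine cut points as $h(\text{prefix of }w)$ followed by $p'_i$, with $p'_i\in\operatorname{Pref}(h)$. As in the proof of Theorem \ref{our result}, the classes $\overline{\psi(p'_1)},\ldots,\overline{\psi(p'_9)}$ form a $9$-term arithmetic progression in $\mathbb Z^2/G_h\cong\mathbb Z/11\mathbb Z$ with difference $\overline{\psi(X_1)}$. By the computation in Theorem \ref{3 consecutive a 9AP}, $\overline{\mathcal P}$ has exactly $9$ elements $\{0,1,3,4,5,6,7,8,10\}$. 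Hence either the difference is $\overline 0$, or the progression is one of the few $9$-APs exhausting $\overline{\mathcal P}$ (differences $\pm 4$ and the like, checked by hand).
\begin{itemize}
\item In the trivial case, condition (O3) forces all $p'_i$ to lie in a class of size at most $2$. The linear-independence argument of Theorem \ref{our result} then kills all but a handful of letter/prefix choices.
\item In the nontrivial case, each $p'_i$ is determined up to the single ambiguous pair $(4,1),(1,2)$.
\end{itemize}
Each admissible choice of $(A_i,p'_i,p''_i)$ fixes $D_j$ via $D_jM=-\psi(p''_{j+1}p'_{j+2})+\psi(p''_jp'_{j+1})$, and $D_j$ must be integral. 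Recording which choices survive should cut the $1953162$ formal parents down to the $8$ that can possibly be realized.

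Next I would run the ancestor computation only above these $8$ parents, applying the same mod-$G_h$ sieve at each generation. This yields a finite set, about $224$ templates, which is finite because $\|M^{-1}\|\approx0.56<1$. For each template I would compute the bound of Lemma \ref{new search bound}, using $\max_{i<j}\bigl|\sum_{r=i}^{j-1}D_r\cdot(1,1)\bigr|$ in place of $mk\Delta$, which should be far below $273$. I would then scan all factors of $h^{\omega}(a)$ up to that length for realizations. A realization found in this scan gives the chain and the factor $w$, and pushing it forward by $h$ gives the abelian $8$-power.

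The main obstacle is the sieve step. One must justify rigorously that every parent discarded cannot be realized. This needs a careful version of Dekking-type reasoning adapted to templates, where the $X_i$ are not exactly images of letters and the boundary letters $a_i$ shift the cut points. I would first try to reduce it to the statement that the cut-point prefix classes form an AP in $\overline{\mathcal P}$, as above, and then enumerate the few residual AP patterns by computer. The remaining effort is bookkeeping: making the pushed-forward abelian $8$-power explicit, noting that it necessarily lies beyond position $11039$, and verifying it directly.
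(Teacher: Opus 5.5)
Your route is the paper's. You pass to $\mathbb{Z}^2/G_h\cong\mathbb{Z}/11\mathbb{Z}$ (the paper's $f(a)=\overline{1}$, $f(b)=\overline{3}$), keep the parents of $T_8$ that come from the two non-constant $9$-term progressions exhausting $\overline{\mathcal P}$ ($L_1$ with difference $\overline{7}$, $L_2$ with difference $\overline{4}$), compute their ancestors, scan factors up to the bound $56$ of Lemma~\ref{new search bound}, and push a realization forward by $h$. The paper's certificate is exactly of this kind: $T_{ggp}(2.1)$ is realized by $baabaaabaabaa$ in $h^2(a)$, and four applications of $h$ give an abelian $8$-power in $h^6(a)$, consistent with $|h^5(a)|=11039$. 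For an existence statement this certificate is the entire proof, so the ``main obstacle'' you name is not one. Soundness of the sieve is needed only for the converse (every abelian $8$-power comes from a realized ancestor of the selected templates), which is what freeness results such as Theorem~\ref{T14-template} require.

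Several of your claims about how the sieve works are nevertheless wrong.

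\textbf{Integrality cuts nothing.} With all $a_i=\epsilon$ and $d_j=\vec 0$, the vector $D_jM$ is an integer vector congruent modulo $G_h$ to $-\bigl(\psi(p'_{j+2})-2\psi(p'_{j+1})+\psi(p'_{j})\bigr)$. So $D_j\in\mathbb{Z}^2$ is precisely your progression condition, and all $1953162$ parents already satisfy it. Essentially all of them come from \emph{constant} progressions.

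\textbf{Constant progressions are not removed by (O3) and linear independence.} The linear-independence argument of Theorem~\ref{our result} concerns nested prefixes inside a single image $h(z)$ with $|z|=2$. Constant progressions genuinely occur: $h$ of any abelian $8$-power has all its cuts at $h$-block boundaries. The paper discards them by descent. Take $n$ minimal with an abelian $8$-power in $h^n(a)$. If the progression is constant, then either the cut prefixes (value $\neq\overline{7}$, using $\psi(ab)=\psi(ba)$) or the cut suffixes (value $\overline{7}$, namely $ba$ and $ab$) share one Parikh vector and one length. Moving the nine cuts to $h$-block boundaries therefore yields blocks $h(A_1),\dots,h(A_8)$ with equal Parikh vectors. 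Non-singularity of $M$ then makes $A_1\cdots A_8$ an abelian $8$-power in $h^{n-1}(a)$, contradicting minimality.

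\textbf{Two values are ambiguous, not one.} They are $\overline{4}$ ($ab$ versus $ba$: same Parikh vector but different letters $A_i$, hence different templates) and $\overline{7}$ ($abaaa$ versus $bab$). The $2\times 2$ choices for each of $L_1,L_2$, with the $\overline{0}$-cut recorded as $A_i=\epsilon$, give the $8$ templates; your single ambiguous pair would give only $4$.

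\textbf{There is no ``same sieve'' at higher generations.} Once $d_j\neq\vec 0$ or $a_i\neq\epsilon$, the classes $\overline{\psi(p'_i)}$ have second differences $\overline{d_j+\psi(a_{j+1})-\psi(a_j)}$, which are in general non-zero, so they no longer form a progression. The only mod-$G_h$ information left is again integrality. The paper simply computes all ancestors of the $8$ selected templates.
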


\begin{proof}

To find abelian 8-powers in our fixed point, we will find exactly $8$ parent templates of 
$$ T_8 = [\epsilon, \epsilon, \epsilon, \epsilon, \epsilon, \epsilon, \epsilon, \epsilon, \epsilon, (0,0), (0,0), (0,0), (0,0), (0,0), (0,0), (0,0)] $$
such that it is sufficient to analyze their realizability by certain factors.
We begin with some preliminary observations.
Let $G=\Z /  11 \Z$ and $f:\Sigma^*\to G$ be a monoid morphism such that
	$$f(a)=\overline{1}, \ f(b)=\overline{3}. $$
While $f$ vanishes on $h(\Sigma)$, that is to say $f(h(a))=f(h(b))=\overline{0}$,
it takes the following values in Table \ref{Pref 8} when evaluated on $\operatorname{Pref}(h)$.

\begin{table}[h]
    \centering
    \begin{tabular}{|c|c|c|}
        \hline
        score & $\operatorname{Pref}(h(a))$& $\operatorname{Pref}(h(b))$\\
        \hline\hline
        $\overline{1}$ & $a$ &-\\ \hline
        $\overline{2}$ & - &-\\ \hline
        $\overline{3}$ & - &$b$\\ \hline
        $\overline{4}$ & $ab$ &$ba$\\ \hline
        $\overline{5}$ & $aba$ &-\\ \hline
        \end{tabular}
        \hspace{0.5cm}
    \begin{tabular}{|c|c|c|}
        \hline
        score & $\operatorname{Pref}(h(a))$& $\operatorname{Pref}(h(b))$\\
        \hline\hline
        $\overline{6}$ & $abaa$ &-\\ \hline
        $\overline{7}$ & $abaaa$ &$bab$\\ \hline
        $\overline{1}$ & - & $baba$\\ \hline
        $\overline{9}$ & - &-\\ \hline
        $\overline{10}$ & $abaaab$ &-\\ 
        \hline
    \end{tabular}
    \vspace{0.5cm}
   \caption{Images of $\operatorname{Pref}( h(a)) $ and $\operatorname{Pref}(h(b))$ under $f$}
    \label{Pref 8}
\end{table}
Let
\begin{align*}
    P&: = f(\operatorname{Pref} (h) )=\{f(\epsilon), f(a),f(ab), f(aba) ,f(abaa), f(abaaa),f(abaaab),\\
    &f(b),f(ba),f(bab), f(baba)\}=\{\overline{0}, \overline{1},\overline{3},\overline{4},\overline{5}, \overline{6}, \overline{7}, \overline{8}, \overline{10} \}.
	\end{align*}
All the values in $P$ have a unique preimage in $\operatorname{Pref}(h)$ under $f$, except for $ \overline{4}$ and $\overline{7}$: 
\begin{align}
    f(ab)&=f(ba)=\overline4, \nonumber\\
    f(abaaa)&=f(bab)=\overline{7}.\label{8 pref of templates}
\end{align}
We  also have the corresponding evaluations for the suffixes:
\begin{align}
    f(aaaba)&=f(bab)=\overline{7}, \nonumber \\
    f(ba)&=f(ab)=\overline{4}. \label{8 pref of templates 2}
\end{align}
We are now ready to present the proof.
  We assume that $ h^\omega(a)$ contains an abelian 8-power. 
  Let $n$ be the minimum positive integer such that $h^n(a)$ contains an abelian 8-power, but $h^{n-1}(a)$ does not. 
Since $h^3(a)$ does not contain an abelian $8$-power, we see that $n \ge 4$.
Let $B_1, B_2, \dots, B_{8}$ occur consecutively in $ h^n(a)$, with the $B_i$'s are permutations of each other. As shown in Figure \ref{Fig:  abelian 8 power},
we set $v_i' \in \operatorname{Suff} (h) $ and  $v_{i+1}\in \operatorname{Pref} (h) $ to be the prefix and the suffix of the block $B_i$ where $v_iv_i' \in \{h(a), h(b) \}$ and $i \in \{1,\ldots, 8\}$. We write
$$B_i = v_i' \, \mu_i \, v_{i+1}$$
for some word $\mu_i$.
\begin{figure}[H]
    \centering
    \begin{tikzpicture}
        \draw (-1,0.7) -- (12,0.7);
        \draw (-1,0) -- (12,0);
    
        \foreach \x/\i in {0/1, 2/2, 4/3} {
            \draw (\x,0) rectangle (\x+0.5,  0.7) node[midway] {\footnotesize $v_{\i}$};
            \draw (\x+0.5,0) rectangle (\x+1,0.7) node[xshift=-0.23cm,yshift=-0.3cm]   {\footnotesize $v'_{\i}$};
        }

        \node at (6.5,0.35) {\dots};
        \node at (6.5,-1) {\dots};
        
        \foreach \x/\i in {8/8, 10/9} {
            \draw (\x,0) rectangle (\x+0.5,0.7) node[midway] {\footnotesize $v_{\i}$};
            \draw (\x+0.5,0) rectangle (\x+1,0.7) node[xshift=-0.23cm,yshift=-0.3cm]    {\footnotesize $v'_{\i}$};
        }

        \draw[fill=gray!30,rounded corners] (0.5,-0.7) rectangle (2.5,-1.2) node[midway] {\footnotesize $B_1$};
        \draw[fill=gray!30,rounded corners] (2.5,-0.7) rectangle (4.5,-1.2) node[midway] {\footnotesize $B_2$};
        \draw[fill=gray!30,rounded corners] (8.5,-0.7) rectangle (10.5,-1.2) node[midway] {\footnotesize $B_{8}$};
        
        \foreach \x in {0.5, 2.5, 4.5, 8.5, 10.5} {
            \draw[thick,dashed,gray] (\x,-0.7) -- (\x,0);}
     
     \foreach \x in {1, 2}{
            \node at (2*\x-0.5,0.3) {\footnotesize $\mu_{\x}$};
        }
    \node at (2*5-0.5,0.3) {\footnotesize $\mu_{8}$};
    \end{tikzpicture}
    \caption{An abelian 8-power and the corresponding blocks}
    \label{Fig:  abelian 8 power}
\end{figure}

As the $B_i$'s are permutations of each other and $f(h(a))=f(h(b))=\overline{0}$, we deduce the following equalities:
\begin{equation}\label{T8 equ class}
    f(v_1') + f(v_2) = f(v_2') + f(v_3) = \cdots = f(v_{8}') + f(v_{9}).
\end{equation} 
The above equalities in \eqref{T8 equ class} yield that the sequence $ (f(v_{i})) $ for $ 1 \leq i \leq 9 $ is an arithmetic progression of length 9 in $ G $. 
Now, we define the sequence $(p_i)$ for $1 \leq i \leq 9$ by $p_i = f(v_i).$
Suppose that $(p_i)_{i=1}^9$ is a trivial arithmetic progression, in other words, it is a constant sequence. Among these constant sequences, two of them require special attention. Note that when $(p_i)_{i=1}^9 = (\overline 4)_{i=1}^9$, the prefixes taking $\overline 4$ under $f$ have the same Parikh vector:  $\psi(ab)=\psi(ba)$. When $(p_i)_{i=1}^9 = (\overline 7)_{i=1}^9 $,
the prefixes taking $\overline 7$ under $f$ do not have the same Parikh vector, $\psi(abaaa)\neq \psi(bab)$.  However, the corresponding suffixes $ab$ and $ba$ have the same Parikh vector, see Equation \eqref{8 pref of templates 2}. Now, assume that
$(p_i)_{i=1}^9 = (j)_{i=1}^9 \text{ for some $j\neq \overline{7}$.}$
Including the case $j=\overline{4} $, the prefixes $v_i$ have the same length, say $t_i$. 
We define $B_i'$ starting from $v_i$ to $ v_{i+1}$ for each $ 1\leq i \leq 8$ as in Figure \ref{Fig: B_i B_i prime}. 
Note that every block $B_i'$ has the same Parikh vector as $ B_i$. 
Moreover, each $B_i'$ is the image of a word $A_i$ for some word $A_i$ under $h$. The word $A_1\cdots A_8$ is an abelian $8$-power in $h^{n-1}(a)$. This contradicts the minimality of $n$. For the case $ j=\overline{7}$, we define $B_i'$ to be the block starting from at the end of $v_i'$ and ending at the end of $v_{i+1}'$ for all $1\leq i \leq 8$. We deduce the same result in a similar manner. Therefore, abelian $8$-powers cannot arise from the trivial arithmetic progressions. 

\begin{figure}[h]
    \centering
    \begin{tikzpicture}[xscale=1.4]
        \draw (0,0.7) -- (11,0.7);
        \draw (0,0) -- (11,0);
        \foreach \x/\i in {0/1, 2/2, 4/3} {
            \draw (\x,0) rectangle (\x+0.5,  0.7) node[midway] {\footnotesize $v_{\i}$};
            \draw (\x+0.5,0) rectangle (\x+1,0.7) node[xshift=-0.23cm,yshift=-0.3cm]   {\footnotesize $v'_{\i}$};
        }
        \node at (6.5,0.35) {\dots};
        \node at (6.5,-1) {\dots};
        \node at (6.2,-2) {\dots};
        
        \foreach \x/\i in {8/8, 10/9} {
            \draw (\x,0) rectangle (\x+0.5,0.7) node[midway] {\footnotesize $v_{\i}$};
            \draw (\x+0.5,0) rectangle (\x+1,0.7) node[xshift=-0.23cm,yshift=-0.3cm]    {\footnotesize $v'_{\i}$};
        }
        \draw[fill=gray!30,rounded corners] (0,-1.7) rectangle (2,-2.2) node[midway] {\footnotesize $B_1'$};
        \draw[fill=gray!30,rounded corners] (2,-1.7) rectangle (4,-2.2) node[midway] {\footnotesize $B_2'$};
        \draw[fill=gray!30,rounded corners] (8,-1.7) rectangle (10,-2.2) node[midway] {\footnotesize $B_{8}'$};

        \foreach \x in {0.5, 2.5, 4.5, 8.5, 10.5} {
            \draw[thick,dashed,gray] (\x,-0.7) -- (\x,0);
        }
        \foreach \x in {0, 2, 4, 8, 10} {
            \draw[thick,dashed,gray] (\x,-1.7) -- (\x,0);
        }

        \draw[fill=gray!30,rounded corners] (0.5,-0.7) rectangle (2.5,-1.2) node[midway] {\footnotesize $B_1$};
        \draw[fill=gray!30,rounded corners] (2.5,-0.7) rectangle (4.5,-1.2) node[midway] {\footnotesize $B_2$};
        \draw[fill=gray!30,rounded corners] (8.5,-0.7) rectangle (10.5,-1.2) node[midway] {\footnotesize $B_{8}$};
    \end{tikzpicture}
    \caption{The blocks $B_i'$ and $B_i$ when  $j\neq \overline{7}$}
    \label{Fig: B_i B_i prime}
\end{figure}
Now, we focus on the non-trivial arithmetic progressions of length $9$. 
There are exactly two such non-trivial arithmetic progressions. We denote them by $L_1$ and $L_2$ where
\begin{align*}
L_1:  &
(p_i)_{i=1}^9 = \{ \overline{5}, \overline{1}, \overline{8}, \overline{4}, \overline{0}, \overline{7}, \overline{3}, \overline{10}, \overline{6} \},\\
L_2: &
(p_i)_{i=1}^9 = \{ \overline{6}, \overline{10}, \overline{3}, \overline{7}, \overline{0}, \overline{4}, \overline{8}, \overline{1}, \overline{5} \}. 
\end{align*}
Equation \eqref{8 pref of templates} shows that the fourth and the sixth terms of $L_1$, namely $\overline{4}$ and $\overline{7}$, are images of two distinct prefixes. Therefore, we have $4$ prefix sequences that are mapped to $ L_1$ under $f$. Similarly, we have $4$ prefix sequences that are mapped to $L_2$ under $f$. Hence, we have $8$ cases to be analyzed in total, and these cases will give us $8$ parent templates of $T_8$ to understand the existence of 
abelian $8$-powers in our infinite word.\\

\noindent\underline{\textit{Case $L_1$}}: We aim to find $ v_1, \ldots, v_9 $ and $ v_1^\prime, \ldots, v_9^\prime $ corresponding to $L_1$. For each $i \in \{1,\dots,9\}$, the prefix $v_{i}$ and the suffix $v_i'$ are determined according to  $p_i=f(v_{i})$ scoring types. Recall that there are two different prefixes and suffixes in 2 different scoring types by \eqref{8 pref of templates}. 
Thus, for the abelian $8$-power  $B_1B_2\cdots B_{8}$,   there are $4$ choices for the prefixes $ v_1, \ldots, v_9 $ . The first subcase is shown in Table \ref{table-8power} in detail:
\begin{align*}
    v_4& = ab   \\
    v_6&=  abaaa .
\end{align*}
\noindent \underline{\textit{Case 1.1}}:
We give the full list of $ v_1,\ldots,v_9$ and $v_1',\ldots,v'_9$ below:
\begin{align*}
    &v_1=aba  , v_2=a, v_3= baba, v_4=ab, v_5=\epsilon, v_6=abaaa, v_7=b,    v_8=abaaab, v_9=abaa\\
    &v_1'=aaba, v_2'=baaaba, v_3'= b, v_4'=aaaba, v_5'=\epsilon, v_6'=ba, v_7'=abab,  v_8'=a, v_9'=aba.
\end{align*}

For a word $ w $, let $ |w|_a $ and $ |w|_b $ denote the number of occurrences of the letters $ a $ and $ b $ in $ w $, respectively, that is, the first and the second coordinates of its Parikh vector. We make use of the fact that each block $ B_i $ in the abelian 8-power has the same number of $ a $'s and the same number of $ b $'s:
\begin{align*}
   |B_1|_a &= |B_2|_a = \cdots = |B_8|_a, \\
|B_1|_b &= |B_2|_b = \cdots = |B_8|_b. 
\end{align*}
We denote the number of $h(a)$'s  and $h(b)$'s in the middle block $\mu_1$ of $B_1$ by $  \alpha $ and $  \beta $, respectively. Then, the number of $a$'s and $b$'s in the first block can be calculated as
    \begin{align*} |aaba|_a+|a|_a+ \alpha |h(a) |_a+\beta |h(b)|_a= 4+ 5 \alpha+2 \beta ,\\
    |aaba|_b+|a|_b+ \alpha |h(a) |_b+\beta |h(b)|_b= 1+ 2 \alpha+3 \beta. \end{align*}
    Let $\alpha_2$ and $\beta_2$ denote the number of  $h(a)$'s and $h(b)$'s
    appearing in the second middle block $\mu_2$, respectively. Similarly,  one calculates the number of letters in the second block as
    \begin{align*} |baaaba|_a+|baba|_a+ \alpha_2 |h(a) |_a+\beta_2 |h(b)|_a= 6+ 5 \alpha_2+2 \beta_2 ,\\
    |baaaba|_b+|baba|_b+ \alpha_2 |h(a) |_b + \beta_2 |h(b)|_b= 4+ 2 \alpha_2+3 \beta_2. \end{align*}
    Therefore,
    \begin{align*}
        5(\alpha-\alpha_2) +2(\beta-\beta_2)  = 2,\\
        2(\alpha-\alpha_2) +3(\beta-\beta_2)  = 3.
    \end{align*}
    We have a unique solution to this equation and in fact $\alpha_2=\alpha$ and $\beta_2 =\beta-1$. Continuing in this fashion, we can compute the number of $h(a)$'s and $h(b)$'s appearing in each middle block as shown in Table \ref{table-8power}.

\begin{table}[h]
\caption{The structure of the blocks $ B_1, B_2, \dots, B_8 $ where  $  \alpha $ and $  \beta $ denote the number of $h(a)$'s  and $h(b)$'s in the middle block $\mu_1$ \\}
\label{table-8power}
\renewcommand{\arraystretch}{1}
\begin{tabular}{|c|c|c|c|}
\hline
 \rule{0pt}{5ex} Block&\shortstack{ \footnotesize The prefix of the block \\  \footnotesize which is a suffix of $h(a)$ or $h(b)$} 
& \shortstack{\footnotesize The number of $h(a)$'s and $h(b)$'s \\ \footnotesize in $\mu_i$}
& \shortstack{ \footnotesize The suffix of the block \\ \footnotesize which is a prefix of $h(a)$ or $h(b)$}  \\
\hline\hline
\rowcolor{gray!30} $B_1$ &$aaba$     & $\alpha, \quad \beta $      & $a$  \\
 $B_2$ &$baaaba$       & $\alpha, \quad (\beta-1) $      & $baba$   \\
\rowcolor{gray!30} $B_3$ &$b$         & $(\alpha +1), \quad (\beta-1) $   & $ab$  \\
 $B_4$ &$aaaba$  & $\alpha, \quad \beta$ & $\epsilon$  \\
\rowcolor{gray!30} $B_5$ &$\epsilon$   & $\alpha, \quad \beta $ & $abaaa$  \\
 $B_6$ &$ba$   & $(\alpha +1), \quad (\beta -1) $ & $b$  \\
\rowcolor{gray!30} $B_7$ &$abab$   & $\alpha, \quad (\beta-1) $ & $abaaab$  \\
 $B_8$ &$a$& $\alpha, \quad \beta $ & $abaa$  \\
\hline
\end{tabular}
\end{table}

The prefix given for $B_1$, which is $aaba$, and  the suffix given for $B_{8}$, which is $abaa$, are both factors of $h(a)$. 
We observe that the larger factor 
$$ v_1 B_1\cdots B_{8} v_{9}'=aba B_1B_2 \cdots B_{8}aba$$ 
containing the abelian $8$-power $B_1 \cdots B_{8}$ is the image of $I$ under $h$ 
$$h(I)= aba B_1B_2 \cdots B_{8}aba$$
where
$$I:= a X_1 a X_2 b X_3 a X_4 \epsilon X_5 a X_6 b X_7 a X_8 a    ,$$
$$\psi(X_i)=( \text{the number of} \ h(a)\text{'s} \ \text{in block} \ \mu_i \ , \ \text{the number of} \ h(b)\text{'s} \ \text{in block} \ \mu_i )$$ for each $i \in \{1,\dots,8 \}.$ Now, we see that $I$ realizes the following template:
\begin{align*}
    T(1.1)=[a,a,b, a,\epsilon,a,  b,a,a,  (0,-1),(1,0),(-1,1),(0,0),(1,-1),(-1,0),(0,1) ].
\end{align*}
Observe that $T(1.1)$ is a parent of $T_8$, and it is the first one selected by our sieve technique. We find that $  T(1.1) $ has 2 parents and 5 ancestors. \\

Next, we state the fact which plays a critical role in the template method. The word $h^\omega(a)$ contains the abelian power $B_1B_2\cdots B_{8}$ of the form in Table \ref{table-8power} if and only if an instance of an ancestor of  $  T(1.1) $   is a factor of $h^\omega(a)$. 
We  calculated that
$$\max_{i< j \le 8}\big|\sum_{r=i}^{j-1} d_r \cdot (1, \ldots, 1)\big|=1$$
is the maximum value
among all ancestors of $T(1.1)$.
The search bound for the ancestors of $T(1.1)$  using Lemma \ref{new search bound} is
$$ N+k-1 + (k-1)(N-2 + 1)= 7+8-1 +(8-1) (7-2+1)=56, $$
where $ N=\max\left\lbrace |h(a)|,|h(b)|\right\rbrace=7$, $ k=8 $.
We tested the factors of $ h^{\omega}(a) $ of length at most 56.
None of the factors of length $\leq 56$ realizes a template in $\operatorname{Anc}(T(1.1))$. Hence, we see that $h^\omega(a)$ does not contain an abelian power of the form $B_1B_2\cdots B_{8}$ specified in Table \ref{table-8power}. This concludes that Case 1.1 can never be the form of an abelian $8$-power contained in $h^\omega (a)$.\\ 

Using the scoring types given in \eqref{8 pref of templates}, we compute the other templates arising from $L_1$ as follows (the templates arising from the Cases 1.2, 1.3, 1.4):
\begin{align*}
    T(1.2)&=[a,a,b, b,\epsilon,a,  b,a,a,  (0,-1),(1,0),(0,0),(-1,1), (1,-1),(-1,0),(0,1) ],\\
    T(1.3)&=[a,a,b, b,\epsilon,b,  b,a,a,  (0,-1),(1,0),(0,0),(0,0), (0,0),(-1,0),(0,1) ],\\
    T(1.4)&= [a,a,b, a,\epsilon,b,  b,a,a,  (0,-1),(1,0),(-1,1),(1,-1), (0,0),(-1,0).(0,1) ].
\end{align*}
Similarly, the templates arising from $L_2$ are given as 
\begin{align*}
    T(2.1)&=[a,a,b, a,\epsilon,a,  b,a,a , (1,0),(-1,0),(1,0),(0,0),(-1,0),(1,0),(-1,0) ],\\
    T(2.2)&=[a,a,b, b,\epsilon,a,  b,a,a , (1,0),(0,-1),(0,1),(0,0),(-1,0),(1,0),(-1,0) ],\\
    T(2.3)&=[a,a,b, b,\epsilon,b,  b,a,a , (1,0),(0,-1),(0,1),(0,0),(0,-1),(0,1),(-1,0) ], \\
    T(2.4)&=[a,a,b, a,\epsilon,b,  b,a,a , (1,0),(-1,0),(1,0),(0,0),(0,-1),(0,1),(-1,0) ].
\end{align*}
We set$$\tau =\{ T(1.1),T(1.2),T(1.3),T(1.4),T(2.1),T(2.2),T(2.3),T(2.4)  \}$$
as the set of $8$ parent templates selected by our sieve technique. Applying the template method for $\tau$, we found that the set $\tau$ has 82 parents and $224$ ancestors.
Now, we make our crucial observation. We have seen that the arithmetic progressions of length $9$ in $P$ determine the existence of abelian $8$-powers in $h^\omega(a)$. As we eliminated the trivial arithmetic progressions, and the non-trivial ones $L_1$ and $L_2$ gave rise to the $8$ selected templates $ \tau$, we conclude that $h^\omega (a)$ contains an abelian $8$-power if and only if one of the ancestors of $\tau$ is realized by a factor of $ h^{\omega} (a)$.
We  computed that
$$\max_{i< j \le 8}\big|\sum_{r=i}^{j-1} d_r \cdot (1, \ldots, 1)\big|=1$$
is the maximum value among all ancestors of $\tau$.
Thus, the search bound for the ancestors of $\tau$  using Lemma \ref{new search bound} is again 56.\\

We give a parent, a grandparent, and a grand grandparent of the template $T(2.1)$: 
\begin{align*}
 T_p (2.1)&=[ a, a, a, a, a, a, a, a, a, (-1, 0), (1, 0), (0, 0), (-1, 0), (1, 0), (-1, 0), (1, 0)],\\
T_{gp}(2.1)& = [a, a, b, a, a, b, a, b, b, (0, 0), (1, 0), (-1, 0), (0, 0), (1, 0), (-1, 0), (1, -1)  ],\\
T_{ggp}(2.1)&= [b, a, b, a, a, b, a, b, a, (-1, 0), (1, 0), (-1, 0), (0, 0), (1, 0), (-1, 0), (1, 0)].
\end{align*}
The template $T_{ggp}(2.1)$ is indeed realized by the factor
$baabaaabaabaa$
of $h^2(a)$, and
$T_{gp}(2.1)$ is realized by the factor
\begin{align*}
   w_{gp} = ababaaabaaba&aabababababaaabaabaaabaabaaababa\\             &bababaaabaabaaabababababaaabaab 
 \end{align*}
of $h^3(a)$. The templates $T_{p} (2.1)$ and $T(2.1)$ are realized by some factors of $h(w_{gp})$ and $h^2(w_{gp})$, respectively.  Finally the word $h^3(w_{gp})$, and hence $h^6(a)$ contains an abelian $8$-power. The accompanying code is available at 
\url{https://github.com/nihantanisali/Abelian-Power-Free-Morphisms/blob/main/Section3_1}. 
\end{proof}

By applying the sieve technique, we confirm that the fixed point $ h^\omega(a) $ in Theorem~\ref{3 consecutive a 9AP}  does in fact contain abelian 8-powers. This highlights that the sieve technique is highly effective in detecting abelian $k$-powers, even in cases involving millions of templates to be tested.

\begin{observation} \label{observation 14} 
In Theorem 2.17, we defined the morphism  
\begin{align*}  
h(a) = abaabaababa, \
h(b) = babababab.  
\end{align*}  
We have shown that this morphism is abelian 16-power free in Theorem \ref{2 consecutive a 16AP}, which guarantees that its fixed point $ h^\omega(a) $ is also abelian 16-power free. However, further analysis suggests that the fixed point might actually be abelian 14-power free. The first 200 terms of $ h^\omega(a) $ contain an abelian 13-power, whereas the 5-th iterate $ h^5(a) $, which consists of 117611 letters, does not contain an abelian 14-power. To prove that $ h^\omega(a) $ is abelian 14-power free, we attempted to apply the template method directly. The template method is applicable as the operator norm of $M^{-1}$ is approximately $ 0.53 < 1 $, where $M$ is the frequency matrix of $h$. However, this verification requires generating all parents and ancestors of the template
$ T_{14} := [ \epsilon,\epsilon,\epsilon,\epsilon,\epsilon,\epsilon,\epsilon,\epsilon,\epsilon,\epsilon,\epsilon,\epsilon,\epsilon,\epsilon,\epsilon,\vec{0},\vec{0},\vec{0},\vec{0},\vec{0},\vec{0},\vec{0},\vec{0},\vec{0},\vec{0},\vec{0},\vec{0},\vec{0}
].$
The process involves more than 600 million parents, making it infeasible with the current version of the code and our setup. 
However, we are able to show that $ h^\omega(a) $ is indeed abelian 14-power free by our sieve technique.
\end{observation}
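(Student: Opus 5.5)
We plan to follow the sieve argument of Proposition \ref{SievingT8}, this time with the structure of Theorem \ref{2 consecutive a 16AP}. Let $G=\Z/19\Z$ and let $f:\Sigma^*\to G$ be the monoid morphism with $f(a)=\overline{1}$ and $f(b)=\overline{3}$. Then $f(h(a))=7+12=\overline{0}$ and $f(h(b))=4+15=\overline{0}$, and $f$ is just the isomorphism $\Z^2/G_h\to\Z/19\Z$ from the proof of Theorem \ref{2 consecutive a 16AP}. Hence
$$P=f(\operatorname{Pref}(h))=\Z/19\Z\setminus\{\overline{2},\overline{13},\overline{17}\}.$$
From the list of Parikh vectors there, every value in $P$ has a unique preimage in $\operatorname{Pref}(h)$ up to Parikh vector, with two exceptions:
\begin{itemize}
\item $\overline{11}$, attained by $(5,2)$ from $h(a)$ and $(2,3)$ from $h(b)$;
\item $\overline{15}$, attained by $(6,3)$ from $h(a)$ and $(3,4)$ from $h(b)$.
\end{itemize}
In both exceptional cases the complementary suffixes have equal Parikh vectors, namely $(2,2)$ and $(1,1)$. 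This is exactly the situation of $\overline{7}$ in the earlier proof.

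Suppose $h^\omega(a)$ contains an abelian $14$-power. Choose $n$ minimal such that $h^n(a)$ contains one, $B_1\cdots B_{14}$, and write $B_i=v_i'\mu_i v_{i+1}$ as in Figure \ref{Fig:  abelian 8 power}. As in \eqref{T8 equ class}, the values $p_i=f(v_i)$, $1\le i\le 15$, form a $15$-term arithmetic progression in $P$.

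First we dispose of the trivial progressions. If $p_i$ is constant and not in $\{\overline{11},\overline{15}\}$, all $v_i$ have equal Parikh vector. Cutting at the starts of the $v_i$ gives blocks $B_i'=h(A_i)$ with $A_1\cdots A_{14}$ an abelian $14$-power in $h^{n-1}(a)$, contradicting minimality. If the constant is $\overline{11}$ or $\overline{15}$, we cut instead at the ends of the $v_i'$, since the suffixes agree in Parikh vector, and reach the same contradiction.

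Next we enumerate the non-trivial $15$-term progressions $c, c+d,\ldots,c+14d$ inside $P$, for $d\neq 0$. Such a progression misses exactly $4$ residues of $\Z/19\Z$, which must include $\overline{2},\overline{13},\overline{17}$. So for each $d$ (up to sign) we check whether these three residues lie in a window of $4$ consecutive positions of the cycle $0,d,2d,\ldots$. This leaves only a handful of progressions $L_1,L_2,\ldots$. For each one, every occurrence of $\overline{11}$ or $\overline{15}$ doubles the number of prefix/suffix choices, giving a finite set $\tau$ of candidate parent templates of $T_{14}$. Each template is computed as in Case 1.1: fix the letters $A_i$ whose images contain the cut points, and solve the linear system $M(\alpha_{i+1}-\alpha_i,\beta_{i+1}-\beta_i)^T=\cdots$ for the difference vectors $D_i$. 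This system has a unique solution because $\det M=19$.

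It then follows that $h^\omega(a)$ contains an abelian $14$-power if and only if some ancestor of a template in $\tau$ is realized in $h^\omega(a)$. We compute $\operatorname{Anc}(\tau)$ by the template algorithm, which is finite since $\|M^{-1}\|\approx 0.53<1$. We also compute the maximum of $\big|\sum_{r=i}^{j-1}D_r\cdot(1,1)\big|$ over all these ancestors, which we expect to be small, say $1$ or $2$. Lemma \ref{new search bound} with $N=11$ and $k=14$ then gives a bound of about
$$11+13+13(9+2)\approx 170.$$
Finally we check by computer that no factor of $h^\omega(a)$ up to this length realizes any ancestor; it suffices to scan a sufficiently long prefix such as $h^5(a)$.

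The main obstacle is computational: keeping $\operatorname{Anc}(\tau)$ of manageable size, and making sure the enumeration of $15$-APs and prefix choices is exhaustive, so that the sieve does not discard a genuine case. If the ancestor set proves large, we would apply the same $f$-sieve recursively to the parents before expanding them further.
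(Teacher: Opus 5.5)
Your route is the paper's (the proof of Theorem \ref{T14-template}): the same $f$ into $\Z/19\Z$, the same handling of constant progressions, and the same non-trivial progressions. Your window test gives exactly $d=\pm 4$ with two shifts each, i.e.\ $L_1,\dots,L_4$. Then come the ancestors of the selected parents and Lemma \ref{new search bound}.

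However, there is a genuine gap in how you enumerate the selected parents. You classify preimages ``up to Parikh vector'', so only $\overline{11}$ and $\overline{15}$ branch. That notion is correct for the constant progressions, but not for building templates. The value $\overline{4}$ is attained both by $ab\in\operatorname{Pref}(h(a))$ and by $ba\in\operatorname{Pref}(h(b))$, and these two cuts give different parent templates, for two reasons:
\begin{itemize}
\item the letter $A_i$ recorded in the template is $a$ in one case and $b$ in the other;
\item the complementary suffix $v_i'$ is $aabaababa$ with $\psi=(6,3)$ in one case and $bababab$ with $\psi=(3,4)$ in the other, which changes the solutions $D_{i-1},D_i$ of your linear system.
\end{itemize}
Compare $T(1.1)$ and $T(1.2)$ in the paper, which differ exactly in the ninth letter and in $D_8,D_9$. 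In Proposition \ref{SievingT8}, $\overline{4}$ was likewise a branching value alongside $\overline{7}$.

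Each of $L_1,\dots,L_4$ contains $\overline{4}$, $\overline{11}$ and $\overline{15}$ exactly once. The correct count is therefore $2^3=8$ templates per progression and $32$ in total, whereas your recipe produces only $16$. The ``if and only if'' you then invoke fails in the direction you need. An abelian $14$-power whose $\overline{4}$-cut falls in the image of the other letter would be captured by none of the ancestors you check, so a clean computer search would not certify that $h^\omega(a)$ is abelian $14$-power free.

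Once $\overline{4}$ is branched, the argument goes through as in the paper: the computed maximum is $1$ and the search bound is $154$. One smaller point: the system for $D_i$ has a unique rational solution because $M$ is non-singular. Its integrality comes from the right-hand side lying in $G_h$, which is what the progression condition on the $p_i$ guarantees, not from $\det M=19$.
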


\begin{theorem} \label{T14-template}
    There is a morphism over a binary alphabet that is abelian $16$-power free, but not abelian $15$-power free  with an abelian $14$-power free fixed point. 
\end{theorem}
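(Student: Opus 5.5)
We take the morphism $h(a)=abaabaababa$, $h(b)=babababab$ from Theorem \ref{2 consecutive a 16AP}. There are three claims to establish, and we treat them in turn.

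\textbf{Abelian $16$-power freeness.} This is already given by Theorem \ref{2 consecutive a 16AP}.

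\textbf{Failure of abelian $15$-power freeness.} We exhibit a short abelian $15$-power free word $u$ such that $h(u)$ contains an abelian $15$-power. The model is the witness $h(abba)$ for the $8$-power in the $9$-free case. Here $h(b)=(ba)^4b$ is highly periodic, so images of words with consecutive $b$'s, such as $u=abba$ or $u=abbba$, are the natural candidates. Take blocks of length $2$ ($ab$ or $ba$) straddling the images. For example, $h(bb)=babababab\,babababab$ contains a long run of alternating blocks $ba$ interrupted only at the junction $bb$. Padding with the suffix $ba$ of $h(a)$ and the prefix $ab$ of $h(a)$ should give $15$ consecutive blocks, each with Parikh vector $(1,1)$. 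The word $u$ must itself be abelian $15$-power free, which is trivial for such short $u$. A small computer search over short $u$ confirms a witness. Note that since $h$ is $16$-free, the witness must reach exactly $15$, which is consistent with $\operatorname{a-rk}(\overline{\mathcal P})=16$.

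\textbf{Abelian $14$-power freeness of the fixed point.} We run the sieve of Proposition \ref{SievingT8} with $G=\Z/19\Z$ and a monoid morphism $f$ vanishing on $h(\Sigma)$. We take $f(a)=\overline 1$ and $f(b)=\overline 3$, which matches the isomorphism in Theorem \ref{2 consecutive a 16AP}, since $7+12=19$ and $4+15=19$. We tabulate $f$ on $\operatorname{Pref}(h)$ and $\operatorname{Suff}(h)$ and record the values with two preimages. These are the classes of $(5,2),(2,3)$ and of $(6,3),(3,4)$, together with any coincidences of equal Parikh vectors such as $ab/ba$.

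Next, take a minimal $n$ such that $h^n(a)$ contains an abelian $14$-power $B_1\cdots B_{14}$, with $n$ small excluded by direct computation on $h^5(a)$. Writing $B_i=v_i'\mu_i v_{i+1}$ as in Figure \ref{Fig:  abelian 8 power}, the values $f(v_i)$ form a $15$-term arithmetic progression in $f(\operatorname{Pref}(h))$.

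The trivial progressions are eliminated as in Proposition \ref{SievingT8}. If the prefixes share a Parikh vector, we realign at the $v_i$. If instead the suffixes do, we realign at the ends of the $v_i'$. In either case we pull back to an abelian $14$-power in $h^{n-1}(a)$, contradicting minimality.

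For the nontrivial $15$-term progressions inside the $16$-element set $\overline{\mathcal P}=\Z/19\Z\setminus\{\overline2,\overline{13},\overline{17}\}$, we enumerate all of them by computer; only a few common differences survive. For each progression we expand the ambiguous values into their $2^r$ prefix choices. Solving the linear systems for the numbers of $h(a)$'s and $h(b)$'s in each $\mu_i$, as in Table \ref{table-8power}, gives one parent template of $T_{14}$ per choice. This yields a small set $\tau$ of templates.

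Finally, we compute $\operatorname{Anc}(\tau)$ and the quantity $\max_{i<j\le 14}|\sum_{r=i}^{j-1}d_r\cdot(1,1)|$ over all ancestors. Lemma \ref{new search bound} then gives a search bound; for instance, if this maximum is $1$, the bound is $11+13+13\cdot 10=154$. We check by computer that no factor of $h^\omega(a)$ of at most that length, all of which occur in $h^5(a)$, realizes any ancestor.

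The main obstacle is this last computational step. The enumeration of progressions must be exhaustive, including handling the suffix-versus-prefix alignment subtlety for each ambiguous class. The ancestor set of $\tau$ must also stay tractable. If it grows too large, we would apply the sieve recursively: at each parent level, use the $f$-progression constraint to discard parents whose letter sequences cannot arise.
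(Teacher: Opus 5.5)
Your proposal is correct and follows the paper's proof essentially step for step: the same morphism and the same witness $u=abba$, the same sieve with $f(a)=\overline{1}$, $f(b)=\overline{3}$ in $\Z/19\Z$, realignment at prefixes or suffixes to rule out trivial progressions, expansion of the nontrivial $15$-term progressions into parent templates, and the bound $154$ from Lemma \ref{new search bound}; for reference, the paper finds exactly four such progressions, with common difference $\pm\overline{4}$, giving $32$ templates. One small correction to your sketch of the witness: the abelian $15$-power in $h(abba)$ is $ba\,(ab)^7(ba)^6\,ab$, starting at the fifth letter, so it uses the suffix $baababa$ of the first $h(a)$ and the prefix $abaab$ of the last one, not just $ba$ and $ab$ as your padding description suggests.
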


\begin{proof}
Let $ \Sigma = \left\lbrace a,b\right\rbrace  $. Recall that the morphism $h: \Sigma ^* \to \Sigma^ * $ in Theorem \ref{2 consecutive a 16AP} is defined by
	\begin{align*}
	&h(a) = abaabaababa\\
	&h(b) = babababab.
	\end{align*}
We have already proved that $h$ is an abelian $ 16 $-power free morphism. 
The image of the word $``abba" $ under the morphism $h$
$$ h(abba)= abaa \, \underline{ba}\, \underline{ab} \, \underline{ab} \, \underline{ab} \, \underline{ab} \, \underline{ab} \, \underline{ab} \, \underline{ab} \, \underline{ba} \, \underline{ba} \, \underline{ba} \, \underline{ba} \, \underline{ba} \, \underline{ba} \, \underline{ab}\, aababa $$
contains an abelian 15-power, hence we conclude that the morphism $h$ is not abelian $15$-power free. We will now prove that the fixed point $h^{\omega}(a)$ of the morphism $h$ is abelian $14$-power free by following the sieve technique performed in Proposition \ref{SievingT8}.
To obtain our result, we shall find exactly $32$ parent templates of 
$$ T_{14} := [ \epsilon,\epsilon,\epsilon,\epsilon,\epsilon,\epsilon,\epsilon,\epsilon,\epsilon,\epsilon,\epsilon,\epsilon,\epsilon,\epsilon,\epsilon,\vec{0},\vec{0},\vec{0},\vec{0},\vec{0},\vec{0},\vec{0},\vec{0},\vec{0},\vec{0},\vec{0},\vec{0},\vec{0}
]$$
such that it is sufficient to analyze their realizability by certain factors.
For this purpose,
let $G=\Z /  19 \Z$ and $f:\Sigma^*\to G$ be a monoid morphism such that
	$f(a)=\overline{1}, \ f(b)=\overline{3}.$
While $f$ vanishes on $h(\Sigma)$, that is, $f(h(a))=f(h(b))=\overline{0}$,
it takes the following values in Table \ref{Pref 14} when evaluated on $\operatorname{Pref}(h)$.
\begin{table}[h]
    \centering
    \begin{tabular}{|c|c|c|}
        \hline
        score & $\operatorname{Pref}(h(a))$& $\operatorname{Pref}(h(b))$\\
        \hline\hline
        $\overline{1}$ & $a$ &-\\ \hline
        $\overline{2}$ & - &-\\ \hline
        $\overline{3}$ & - &$b$\\ \hline
        $\overline{4}$ & $ab$ &$ba$\\ \hline
        $\overline{5}$ & $aba$ &-\\ \hline
        $\overline{6}$ & $abaa$ &-\\ \hline
        $\overline{7}$ & - & $bab$\\ \hline
        $\overline{8}$ & - & $baba$\\ \hline
        $\overline{9}$ & $abaab$ & - \\ \hline
    \end{tabular}\hspace{0.5cm}
    \begin{tabular}{|c|c|c|}
        \hline
        score & $\operatorname{Pref}(h(a))$& $\operatorname{Pref}(h(b))$\\
        \hline\hline
        $\overline{10}$ & $abaaba$ & - \\ \hline
        $\overline{11}$ & $abaabaa$ & $babab$ \\ \hline
        $\overline{12}$ & - & $bababa$ \\ \hline
        $\overline{13}$ & - & - \\ \hline
        $\overline{14}$ & $abaabaab$ & - \\ \hline
        $\overline{15}$ & $abaabaaba$ & $bababab$ \\ \hline
        $\overline{16}$ & - & $babababa$ \\ \hline
        $\overline{17}$ & -& -\\ \hline
        $\overline{18}$ & $abaabaabab$ & -\\ \hline
    \end{tabular}
    \vspace{0.5cm}
    \caption{Images of $\operatorname{Pref}( h(a)) $ and $\operatorname{Pref}(h(b))$ under $f$}
    \label{Pref 14}
\end{table}

Let
\begin{align*}
	P&: = f(\operatorname{Pref} (h) )=\{f(\epsilon), f(a),f(ab), f(aba) ,f(abaa), f(abaab),f(abaaba),f(abaabaa),\\
    &f(abaabaab),
    f(abaabaaba),f(abaabaabab), f(b),f(ba),f(bab), f(baba), f(babab),f(bababa),\\
    & f(bababab),f(babababa)\} =\{\overline{0}, \overline{1},\overline{3},\overline{4},\overline{5}, \overline{6}, \overline{7}, \overline{8}, \overline{9}, \overline{10}, \overline{11}, \overline{12}, \overline{14}, \overline{15}, \overline{16}, \overline{18}\}.
\end{align*}   
All the values in $P$ have a unique preimage in $\operatorname{Pref}(h)$ under $f$, except for $ \overline{4}, \overline{11}$ and $\overline{15}$: 
\begin{align}
    f(ab)&=f(ba)=\overline4, \nonumber\\
    f(abaabaa)&=f(babab)=\overline{11},  \nonumber\\
    f(abaabaaba)&=f(bababab)=\overline{15}.\label{pref of templates}
\end{align}
We also have the corresponding evaluations for the following suffixes:
\begin{align}
    f(aabaababa )&=f( bababab)=\overline{15},\nonumber \\
    f(baba)&=f(abab)=\overline{8} ,\nonumber \\
    f(ba)&=f(ab)=\overline{4}.\label{suf of templates T-14}
\end{align}
Suppose that $ h^\omega(a)$ contains an abelian 14-power. 
  Let $n$ be the minimum positive integer such that $h^n(a)$ contains an abelian 14-power but $h^{n-1}(a)$ does not. 
We know that $n \ge 4$ by Observation \ref{observation 14}.
Let $B_1, B_2, \dots, B_{14}$ occur consecutively in $ h^n(a)$, where the $B_i$'s are anagrams of each other. As shown in Figure \ref{fig:abelian-14-power},
we set $v_i' \in \operatorname{Suff} (h) $ and  $v_{i+1}\in \operatorname{Pref} (h) $ to be the prefix and the suffix of the block $B_i$, where $v_iv_i' \in \{h(a), h(b) \}$ and $i \in \{1,\ldots, 14\}$. We write
$$B_i = v_i' \, \mu_i \, v_{i+1}$$
for some word $\mu_i$.

\begin{figure}[h]
    \centering
    \begin{tikzpicture}
        \draw (-1,0.7) -- (12,0.7);
        \draw (-1,0) -- (12,0);
        \foreach \x/\i in {0/1, 2/2, 4/3} {
            \draw (\x,0) rectangle (\x+0.5,  0.7) node[midway] {\footnotesize $v_{\i}$};
            \draw (\x+0.5,0) rectangle (\x+1,0.7) node[xshift=-0.23cm,yshift=-0.3cm]   {\footnotesize $v'_{\i}$};
        }
        \node at (6.5,0.35) {\dots};
        \node at (6.5,-1) {\dots};
        
        \foreach \x/\i in {8/14, 10/15} {
            \draw (\x,0) rectangle (\x+0.5,0.7) node[midway] {\footnotesize $v_{\i}$};
            \draw (\x+0.5,0) rectangle (\x+1,0.7) node[xshift=-0.23cm,yshift=-0.3cm]    {\footnotesize $v'_{\i}$};
        }

        \draw[fill=gray!30,rounded corners] (0.5,-0.7) rectangle (2.5,-1.2) node[midway] {\footnotesize $B_1$};
        \draw[fill=gray!30,rounded corners] (2.5,-0.7) rectangle (4.5,-1.2) node[midway] {\footnotesize $B_2$};
        \draw[fill=gray!30,rounded corners] (8.5,-0.7) rectangle (10.5,-1.2) node[midway] {\footnotesize $B_{14}$};
        \foreach \x in {0.5, 2.5, 4.5, 8.5, 10.5} {
            \draw[thick,dashed,gray] (\x,-0.7) -- (\x,0);
        }
        \foreach \x in {0.5, 2.5, 4.5, 8.5, 10.5} {
        \draw[thick,dashed,gray] (\x,-0.7) -- (\x,0);}
     
     \foreach \x in {1, 2}{
            \node at (2*\x-0.5,0.3) {\footnotesize $\mu_{\x}$};
        }
    \node at (2*5-0.5,0.3) {\footnotesize $\mu_{14}$};
    \end{tikzpicture}
    \caption{An abelian 14-power and the corresponding blocks}
    \label{fig:abelian-14-power}
\end{figure}
As the $B_i$'s are permutations of each other and $f(h(a))=f(h(b))=\overline{0}$, we infer the following equalities:
\begin{equation}\label{T14 equ class}
    f(v_1') + f(v_2) = f(v_2') + f(v_3) = \cdots = f(v_{14}') + f(v_{15}) .
\end{equation} 
The equalities in \eqref{T14 equ class} imply that the sequence $ (f(v_{i})) $ for $ 1 \leq i \leq 15 $ is an arithmetic progression of length $15$ in $ G $.
For $1 \leq i \leq 15$, we set
\begin{align*}
    p_i &= f(v_{i}).
\end{align*}
Assume that $ (p_i)_{i=1}^{15} $ is a trivial arithmetic progression. Among these trivial arithmetic progressions, three of them must be examined carefully. 
Observe that when
$$(p_i)_{i=1}^{15} = (\overline{4})_{i=1}^{15},$$
the prefixes taking the value $\overline 4$ under $f$ have the same Parikh vector by  \eqref{pref of templates}. When
$$(p_i)_{i=1}^{15} = (\overline{11})_{i=1}^{15} \text{ or } (\overline{15})_{i=1}^{15},$$
the corresponding suffixes have the same Parikh vector: $\psi(baba)=\psi(abab)$ and $\psi(ab)=\psi(ba)$ by \eqref{suf of templates T-14}. Now, assume that
$$(p_i)_{i=1}^{15} = (j)_{i=1}^{15} \text{ for some $j\neq \overline{11} ,\overline{15}$.}$$
Including the case $j=\overline{4} $, the prefixes $v_i$ have the same length, say $t_i$. 
We define $B_i'$ starting from $v_i$ to $ v_{i+1}$ for each $ 1\leq i \leq 14$.
Note that every block $B_i'$ has the same Parikh vector as $ B_i$. 
Moreover, each $B_i'$ is the image of a word $A_i$ for some word $A_i$ under $h$. The word $A_1\cdots A_{14}$ is an abelian 14-power in $h^{n-1}(a)$. This contradicts the minimality of $n$.
For the cases $ j=\overline{11},\overline{15}$, we use the suffixes to define the corresponding block $B_i'$ as in Proposition \ref{SievingT8}.  Therefore, abelian $14$-powers cannot emerge from the trivial arithmetic progressions. \\

Now, we concentrate on the non-trivial arithmetic progressions of length $15$. Notice that the set $P$ itself is an arithmetic progression of length 16, and this can be seen from the fact that $G \setminus P = \{\overline{13}, \overline{17}, \overline{2}\}$ is an arithmetic progression of length 3. Therefore, there are exactly 4 non-trivial arithmetic progressions of length $15$ in $P$:
\begin{align*}
L_1: (p_i) &= \{ \overline{10}, \overline{14}, \overline{18}, \overline{3}, \overline{7}, \overline{11}, \overline{15}, \overline{0}, \overline{4}, \overline{8}, \overline{12}, \overline{16}, \overline{1}, \overline{5}, \overline{9} \}, \\
L_2: (p_i) &= \{ \overline{6}, \overline{10}, \overline{14}, \overline{18}, \overline{3}, \overline{7}, \overline{11}, \overline{15}, \overline{0}, \overline{4}, \overline{8}, \overline{12}, \overline{16}, \overline{1}, \overline{5} \}, \\
L_3: (p_i)& = \{ \overline{5}, \overline{1}, \overline{16}, \overline{12}, \overline{8}, \overline{4}, \overline{0}, \overline{15}, \overline{11}, \overline{7}, \overline{3}, \overline{18}, \overline{14}, \overline{10}, \overline{6} \}, \\
L_4: (p_i) &= \{ \overline{9}, \overline{5}, \overline{1}, \overline{16}, \overline{12}, \overline{8}, \overline{4}, \overline{0}, \overline{15}, \overline{11}, \overline{7}, \overline{3}, \overline{18}, \overline{14}, \overline{10} \}. 
\end{align*}
Notice that $L_1$ and $L_2$ are shifts of each other. Similarly, $L_3$ and $L_4$ are shifts of each other.
Equations in \eqref{pref of templates} indicate that the sixth, the seventh, and the ninth terms of $L_1$, namely $\overline{11}$, $\overline{15}$ and $\overline{4}$ are images of two distinct prefixes, respectively. Therefore, we have $8$ prefix sequences that are mapped to $ L_1$ under $f$. Similarly, we have $8$ prefix sequences that are mapped to each of $L_2$, $L_3$ and $L_4$ under $f$. Hence, we have $32$ cases to be analyzed in total, and these cases will give us $32$ parent templates of $T_{14}$ to understand the existence of abelian $14$-powers in our infinite fixed point.\\

\noindent\underline{\textit{Case $L_1$}}: 
Our goal is to find $ v_1, \ldots, v_{15} $ and $ v_1^\prime, \ldots, v_{15}^\prime $ corresponding to $L_1$. For each $i \in \{1,\dots,15\}$, the prefix $v_{i}$ and the suffix $v_i'$ are determined according to  $p_i=f(v_{i})$ scoring types. Recall that there are two different prefixes and suffixes in 3 different scoring types by \eqref{pref of templates}.
Thus, for the abelian $14$-power $B_1B_2\cdots B_{14}$,  there are $8$ choices for the prefixes. 
The first subcase is shown in Table  \ref{table:imp1} in detail:
\vspace{-0.2em}
$$    v_6 = babab, \ v_7=  bababab, \ v_9=  ab.
$$
\vspace{-0.5em}
\noindent \underline{\textit{Case 1.1}}:
We give the full list of $ v_1,\ldots,v_{15} $ and $v_1',\ldots,v_{15}'$ below:
\vspace{-0.2em}
\begingroup
\small
\begin{align*}
 &v_1=abaaba, v_2=abaabaab, v_3=abaabaabab, v_4= b, v_5=bab, v_6=babab, v_7=bababab, v_8=\epsilon,    \\
    &v_9=ab, v_{10}=baba, v_{11}=bababa, v_{12}=babababa, v_{13}=a, v_{14}=aba, v_{15}=abaab,\\
&v_1'=ababa, v_2'=aba, v_3'= a, v_4'=abababab, v_5'=ababab, v_6'=abab, v_7'=ab,  v_8'=\epsilon, \\
    & v_9'=aabaababa, v_{10}'=baba, v_{11}'=bab, v_{12}'=b, v_{13}'=baabaababa, v_{14}'=abaababa, v_{15}' = aababa.
\end{align*}
\endgroup
Recall that  $|w|_a$ and $|w|_b$ are the first and the second coordinates of the Parikh vector of the word $w$, respectively.  We use the fact that the number of $a$'s and $b$'s are the same in each $B_i$:
\vspace{-0.8em}
\begin{align*}|B_1|_a=|B_2|_a=\cdots=|B_{14}|_a,\\ |B_1|_b=|B_2|_b=\cdots=|B_{14}|_b.\end{align*} 
We let $  \alpha $ and $  \beta $ be the number of $h(a)$'s  and $h(b)$'s in the middle block $\mu_1$ of $B_1$, respectively. Then, using the equations above, the structures of the blocks $ B_1, B_2, \dots, B_{14} $ (in fact, the structures of the middle blocks $\mu_1,\ldots, \mu_{14}$) are determined as in Table \ref{table:imp1}.
\begin{table}[H] 
\centering
\caption{The structure of the blocks $ B_1, B_2, \dots, B_{14} $ where  $  \alpha $ and $  \beta $ denote the number of $h(a)$'s  and $h(b)$'s in the middle block $\mu_1$ \\}
\label{table:imp1}
\renewcommand{\arraystretch}{1}
\begin{tabular}{|c|c|c|c|}
\hline
Block \rule{0pt}{5ex}&\shortstack{ \footnotesize The prefix of the block \\  \footnotesize which is a suffix of $h(a)$ or $h(b)$} 
& \shortstack{\footnotesize The number of $h(a)$'s and $h(b)$'s \\ \footnotesize in $\mu_i$}
& \shortstack{ \footnotesize The suffix of the block \\ \footnotesize which is a prefix of $h(a)$ or $h(b)$}  \\
\hline\hline
\rowcolor{gray!30} $B_1$ &$ababa$     & $\alpha, \quad \beta $      & $abaabaab$  \\
 $B_2$ &$aba$       & $\alpha, \quad \beta$      & $abaabaabab$   \\
\rowcolor{gray!30} $B_3$ &$a$         & $(\alpha +1), \quad \beta $   & $b$  \\
 $B_4$ &$abababab$  & $(\alpha+1), \quad (\beta-1 )$ & $bab$  \\
\rowcolor{gray!30} $B_5$ &$ababab$   & $(\alpha+1), \quad (\beta-1) $ & $babab$  \\
 $B_6$ &$abab$   & $(\alpha +1), \quad (\beta -1) $ & $bababab$  \\
\rowcolor{gray!30} $B_7$ &$ab$   & $(\alpha+1), \quad \beta $ & $\epsilon$  \\
 $B_8$ &$\epsilon$& $(\alpha+1), \quad \beta $ & $ab$  \\
\rowcolor{gray!30}$B_{9}$ &$aabaababa$ & $\alpha, \quad \beta $ & $baba$  \\
 $B_{10}$ &$babab$ & $(\alpha+1), \quad (\beta-1) $ & $bababa$  \\
\rowcolor{gray!30}$B_{11}$ &$bab$ & $(\alpha+1), \quad (\beta-1) $ & $babababa$  \\
 $B_{12}$ &$b$ & $(\alpha+1) , \quad \beta $ & $a$  \\
\rowcolor{gray!30}$B_{13}$ &$baabaababa$ & $\alpha, \quad \beta $ & $aba$  \\
$B_{14}$ &$abaababa$   & $\alpha , \quad \beta$      & $abaab$   \\
\hline
\end{tabular}
\end{table}

The prefix given for $B_1$, which is $ababa$, and  the suffix given for $B_{14}$, which is $abaab$, are both factors of $h(a)$. 
We observe that the larger factor 
$$ v_1 B_1\ldots B_{14} v_{15}'=abaaba B_1B_2 \cdots B_{14}aababa$$ 
containing the abelian $14$-power $B_1\ldots, B_{14}$ is the image of $I$ under $h$, in other words,
$$h(I)= abaaba B_1B_2 \cdots B_{14}aababa$$
where
$$I:= aX_1aX_2aX_3bX_4bX_5bX_6bX_7\epsilon X_8aX_9bX_{10}bX_{11}bX_{12}aX_{13}aX_{14}a$$
$$\psi(X_i)=( \text{the number of} \ h(a)\text{'s} \ \text{in block} \ \mu_i \ , \ \text{the number of} \ h(b)\text{'s} \ \text{in block} \ \mu_i )$$ for each $i \in \{1,\dots,14 \}.$  
Notice that $I$ realizes the following template:
\begin{align*}
    T(1.1) := [a, a, a, b, b,&\ b, b, \epsilon, a, b, b, b, a, a, a,  (0,0),  (1,0), (0,-1),  (0,0),  (0,0), \\  
    &(0,1), (0,0), (-1,0),   (1,-1),   (0,0),  (0,1),  (-1,0), (0,0)].
\end{align*}
We examine the remaining seven cases corresponding to different choices of $ v_6$, $v_7$ and $v_9$ using the same technique as in Case 1.1. 
The templates arising from these cases (the remaining 7 cases of this case) are listed below:
\vspace{-0.5em}
\begin{align*}
    T(1.2) := [a, a, a, b, b,&\ b, b, \epsilon, b, b, b, b, a, a, a,  (0,0), (1,0),  (0,-1),  (0,0),  (0,0), \\  
    &(0,1),  (0,0),  (0,-1),   (0,0),  (0,0),  (0,1),  (-1,0),   (0,0)],
\end{align*}
\begin{align*}
    T(1.3) := [a, a, a, b, b,&\ b, a, \epsilon, b, b, b, b, a, a, a,  (0,0),  (1,0),  (0,-1),  (0,0),  (-1,1), \\  
    &(1,0),   (0,0),  (0,-1),   (0,0),   (0,0), (0,1),  (-1,0),   (0,0)],
\end{align*}
\begin{align*}
    T(1.4) := [a, a, a, b, b,&\ b, a, \epsilon,  a, b, b, b, a, a, a,  (0,0),  (1,0),  (0,-1),  (0,0),  (-1,1), \\  
    &(1,0),   (0,0), (-1,0),   (1,-1),   (0,0),  (0,1),  (-1,0),   (0,0)],
\end{align*}
\begin{align*}
    T(1.5) := [a, a, a, b, b,&\ a, a, \epsilon, a, b, b, b, a, a, a,  (0,0),  (1,0),  (0,-1),  (-1,1),  (0,0), \\  
    &(1,0),   (0,0),  (-1,0),   (1,-1),   (0,0),  (0,1), (-1,0),   (0,0)],
\end{align*}
\begin{align*}
    T(1.6) := [a, a, a, b, b,&\ a, a, \epsilon, b, b, b, b, a, a, a, (0,0),  (1,0),  (0,-1),  (-1,1),  (0,0), \\  
    &(1,0),   (0,0),  (0,-1),   (0,0),   (0,0),  (0,1),  (-1,0),   (0,0)],
\end{align*}
\begin{align*}
    T(1.7) := [a, a, a, b, b,&\ a, b, \epsilon, b, b, b, b, a, a, a,  (0,0),  (1,0),  (0,-1),  (-1,1), (1,-1), \\  
    &(0,1),  (0,0),  (0,-1),   (0,0),   (0,0),  (0,1),  (-1,0),   (0,0)],
\end{align*}
\begin{align*}
    T(1.8) := [a, a, a, b, b,&\ a, b, \epsilon, a, b, b, b, a, a, a,  (0,0), (1,0),  (0,-1),  (-1,1), (1,-1), \\  
    &(0,1),   (0,0),  (-1,0),   (1,-1),   (0,0),  (0,1),  (-1,0),   (0,0)].
\end{align*}

Observe that the templates emerging from $L_2$ can be computed from the templates obtained by $L_1$ as these two arithmetic progressions differ by only one term, and they are given by
\begin{align*}
    T(2.1) := [a, a, a, a, b,&\ b,  b, b, \epsilon, a, b, b, b, a, a,  (0,0), (0,0),  (1,0),  (0,-1),  (0,0), \\  
    &  (0,0),  (0,1),   (0,0), (-1,0),   (1,-1),  (0,0),  (0,1),  (-1,0)],
\end{align*}
\begin{align*}
    T(2.2) := [a, a, a, a, b,&\ b, b, b, \epsilon, b, b, b, b, a, a, (0,0),  (0,0),  (1,0),  (0,-1),  (0,0),  \\  
    &(0,0),   (0,1),   (0,0), (0,-1),  (0,0),   (0,0), (0,1),  (-1,0)],
\end{align*}
\begin{align*}
    T(2.3) := [a, a, a, a, b,&\ b, b, a, \epsilon, b, b, b, b, a, a,  (0,0),  (0,0),  (1,0),  (0,-1),  (0,0),  \\  
    & (-1,1),  (1,0),   (0,0),  (0,-1),  (0,0),  (0,0),  (0,1), (-1,0)],
\end{align*}
\begin{align*}
    T(2.4) := [a, a, a, a, b,&\ b, b, a, \epsilon,  a, b, b, b, a, a,  (0,0),  (0,0),  (1,0),  (0,-1),  (0,0),  \\  
    & (-1,1),  (1,0),  (0,0), (-1,0),   (1,-1),  (0,0),  (0,1),  (-1,0)],
\end{align*}
\begin{align*}
    T(2.5) := [a, a, a, a, b,&\ b, a, a, \epsilon, a, b, b, b, a, a,  (0,0),  (0,0),  (1,0),  (0,-1), (-1,1),  \\  
    & (0,0),  (1,0),  (0,0),  (-1,0),   (1,-1),  (0,0),  (0,1),  (-1,0)],
\end{align*}
\begin{align*}
    T(2.6) := [a, a, a, a, b,&\ b, a, a, \epsilon, b, b, b, b, a, a,  (0,0),  (0,0),  (1,0),  (0,-1),  (-1,1), \\  
    & (0,0),  (1,0),   (0,0),  (0,-1),   (0,0),  (0,0),  (0,1),  (-1,0)],
\end{align*}
\begin{align*}
    T(2.7) := [a, a, a, a, b,&\ b, a, b, \epsilon, b, b, b, b, a, a, (0,0),  (0,0),  (1,0),  (0,-1),  (-1,1),  \\  
    & (1,-1),  (0,1),  (0,0),  (0,-1),  (0,0),  (0,0),  (0,1),  (-1,0)],
\end{align*}
\begin{align*}
    T(2.8) := [a, a, a, a, b,&\ b, a, b, \epsilon, a, b, b, b, a, a,  (0,0),  (0,0),  (1,0),  (0,-1),  (-1,1),  \\  
    &(1,-1),   (0,1),  (0,0),  (-1,0), (1,-1),  (0,0),  (0,1),  (-1,0)].
\end{align*}
The templates obtained from $L_3$ are 
\begin{align*}
    T(3.1) := [a, a, b,  b, b,&\ b,  \epsilon, b, a,  b, b,  a, a, a, a,  (0,-1),  (1,0), (0,0), (0,0),  (0,0),\\  
    &  (0,0),  (-1,1),  (1,-1),  (0,0),  (-1,0), (0,1),   (0,0), (0,0)],
\end{align*}
\begin{align*}
    T(3.2) := [a, a, b,  b, b,&\ b,  \epsilon, b, b,  b, b,  a, a, a, a,   (0,-1), (1,0), (0,0), (0,0), (0,0),\\  
    & (0,0), (0,0),   (0,0),  (0,0),  (-1,0), (0,1),   (0,0), (0,0)],
\end{align*}
\begin{align*}
    T(3.3) := [a, a, b,  b, b,&\ b,  \epsilon, a, b, b, b, a, a, a, a,  (0,-1),  (1,0), (0,0), (0,0), (0,0),\\  
    &(-1,1),   (1,-1),  (0,0), (0,0),   (-1,0),  (0,1),  (0,0), (0,0)],
\end{align*}
\begin{align*}
    T(3.4) := [a, a, b,  b, b,&\ b,  \epsilon, a, a,  b, b, a,  a, a, a,   (0,-1),  (1,0),  (0,0),  (0,0),  (0,0),\\  
    & (-1,1),   (0,0),   (1,-1),  (0,0),  (-1,0),  (0,1),   (0,0),  (0,0)],
\end{align*}
\begin{align*}
    T(3.5) := [a, a, b,  b, b,&\ a,  \epsilon, a, a,  b, b,  a, a, a, a,   (0,-1), (1,0), (0,0), (0,0),  (-1,1),\\  
    &   (0,0),  (0,0),   (1,-1),  (0,0),  (-1,0),  (0,1),   (0,0),  (0,0)],
\end{align*}
\begin{align*}
    T(3.6) := [a, a, b, b, b,&\ a, \epsilon, a, b,  b, b, a, a, a, a,  (0,-1),  (1,0),  (0,0),  (0,0), (-1,1), \\  
    &  (0,0), (1,-1),  (0,0),  (0,0),  (-1,0),  (0,1),   (0,0),  (0,0)],
\end{align*}
\begin{align*}
    T(3.7) := [a, a, b, b, b,&\ a,  \epsilon, b, b,  b, b,  a,  a, a, a,   (0,-1),  (1,0),  (0,0),  (0,0), (-1,1),\\  
    & (1,-1),  (0,0),  (0,0),  (0,0),  (-1,0), (0,1),  (0,0),  (0,0)],
\end{align*}
\begin{align*}
    T(3.8) := [a, a, b, b, b,&\ a,  \epsilon, b, a,  b, b,  a, a, a, a,  (0,-1),  (1,0), (0,0), (0,0),  (-1,1),  \\  & (1,-1),  (-1,1),   (1,-1),   (0,0),  (-1,0),  (0,1),   (0,0), (0,0)].
\end{align*}
Finally, the templates arising from $L_4$ can be calculated with the aid of the templates obtained by $L_3$, and they are given by 
\begin{align*}
    T(4.1) := [a, a, a, b, b,&\ b, b, \epsilon, b, a, b, b,  a, a, a, (0,0), (0,-1), (1,0), (0,0), (0,0), \\  
    &(0,0), (0,0), (-1,1), (1,-1), (0,0), (-1,0), (0,1),  (0,0)],
\end{align*}
\begin{align*}
    T(4.2) := [a, a, a, b, b,&\ b, b, \epsilon, b, b, b, b,  a, a, a,  (0,0),  (0,-1),  (1,0),  (0,0),  (0,0), \\  
    &(0,0),   (0,0),  (0,0),   (0,0),   (0,0), (-1,0),  (0,1),   (0,0)],
\end{align*}
\begin{align*}
    T(4.3) := [a, a, a, b, b,&\ b, b, \epsilon, a, b, b, b,  a, a, a, (0,0), (0,-1),  (1,0),  (0,0),  (0,0), \\  
    &(0,0),   (-1,1),  (1,-1),   (0,0),  (0,0),  (-1,0),  (0,1),   (0,0)],
\end{align*}
\begin{align*}
    T(4.4) := [a, a, a, b, b,&\ b, b, \epsilon, a, a, b, b,  a, a, a,  (0,0), (0,-1), (1,0), (0,0), (0,0), \\  
    &(0,0), (-1,1), (0,0), (1,-1), (0,0), (-1,0), (0,1),  (0,0)],
\end{align*}
\begin{align*}
    T(4.5) := [a, a, a, b, b,&\ b, a, \epsilon, a, a, b, b,  a, a, a,  (0,0),  (0,-1),  (1,0),  (0,0), (0,0), \\  
    &(-1,1), (0,0), (0,0), (1,-1), (0,0), (-1,0), (0,1), (0,0)],
\end{align*}
\begin{align*}
    T(4.6) := [a, a, a, b, b,&\ b, a, \epsilon, a, b, b, b,  a, a, a,  (0,0), (0,-1), (1,0), (0,0), (0,0), \\  
    &(-1,1), (0,0), (1,-1), (0,0), (0,0), (-1,0), (0,1),  (0,0)],
\end{align*}
\begin{align*}
    T(4.7) := [a, a, a, b, b,&\ b, a, \epsilon, b, b, b, b,  a, a, a,  (0,0), (0,-1), (1,0), (0,0), (0,0), \\  
    &(-1,1), (1,-1), (0,0), (0,0), (0,0), (-1,0), (0,1),  (0,0)],
\end{align*}
\begin{align*}
    T(4.8) := [a, a, a, b, b,&\ b, a, \epsilon, b, a, b, b,  a, a, a,  (0,0), (0,-1), (1,0), (0,0), (0,0), \\  
    &(-1,1), (1,-1), (-1,1), (1,-1), (0,0), (-1,0), (0,1),  (0,0)].
\end{align*}
Define $$\tau =\{ T(i.j) \mid 1\leq i \leq 4 ,\, 1\leq j \leq 8\}$$
to be the set of $32$ parent templates of $T_{14}$ selected by our sieve technique. Applying the template method for $\tau$, we found that the set $\tau$  has $268$ parents and $1244$ ancestors appearing in $14$ generations. We computed that
    $$\max_{i< j \le 14}\big|\sum_{r=i}^{j-1} d_r \cdot (1, \ldots, 1)\big|=1$$ is the maximum value over all ancestors of $\tau$. Thus, the search bound for the ancestors of $\tau$  using Lemma \ref{new search bound} is
$$ N+k-1 + (k-1)(N-2 + 1)= 11+14-1 +(14-1) (11-2+1)=154,$$
where $N=11$ and $k=14$.
(The search bound coming from the $\Delta$-calculation is 869.)
\\

Now, we emphasize our key observation. We have seen that the arithmetic progressions of length $15$ in $P$ determine the existence of abelian $14$-powers in $h^\omega(a)$. As we eliminated the trivial arithmetic progressions, and the non-trivial ones $L_1, L_2, L_3, L_4$ yielded a set $\tau$ of $32$ selected templates, we conclude that $h^\omega (a)$ contains an abelian $14$-power if and only if one of the ancestors $\mathcal{T}$ of $\tau$ is realized by a factor of length at most 154.
Therefore, we concluded that there are no abelian 14-powers in our fixed point which arise from Case 1, 2, 3 or 4 (namely from $L_1, L_2, L_3, L_4$). Hence, there is no abelian 14-power in the aforementioned fixed point of $h$. 
Consequently, the morphism $h$ is abelian $16$-power free but not abelian 15-power free, with an abelian $14$-power free fixed point $h^\omega (a)$.
See
\url{https://github.com/nihantanisali/Abelian-Power-Free-Morphisms/blob/main/Section3_2}
for the accompanying code. 
\end{proof}
The results presented in this section demonstrate the strength of the sieve technique as a versatile tool for determining whether the fixed point of a morphism contains abelian powers. We applied this technique to show that the fixed point of a certain morphism avoids abelian $14$-powers, thereby establishing Theorem~\ref{T14-template}. On the other hand, we used the same technique to verify that the fixed point of a different morphism contains an abelian $8$-power. These complementary outcomes highlight the technique’s effectiveness in both ruling out and detecting abelian powers in the fixed point of a morphism.

\subsection{\texorpdfstring{An Overview of Morphisms and Abelian $k$-Power Freeness}{An Overview of Morphisms and Abelian k-Power Freeness}}
Richomme and Séébold \cite{Ric} gave an overview of morphisms and $k$-power freeness. Then, they presented several conjectures, and they partially solved one of those by giving
binary morphisms which are not $ k $-power free, but which are $ (k + 1) $-power free and have $k$-power free fixed points for any integer $ k \ge 3 $.
 Keränen  \cite{keranen 1986} addressed this problem by providing the following binary morphism 
	$$ g =   \begin{dcases}
	& a \mapsto aabaabbaabbabbaabaababbabbaabaabb \\
	& b \mapsto aabbabbaabaababbabbaabaabbaabbabb. \\
	\end{dcases}$$
While this morphism itself is not 3-power free since $ g(baba) $ contains a 3-power, its infinite fixed point $h^\omega(a)$ is 3-power free.
	However, they \cite{keranen 1986,Ric} studied power-freeness, not  abelian power-freeness.  As noted by Currie and Rampersad in \cite{C-R2012}, it is also of interest to examine morphisms that have abelian $k$-power free fixed points without themselves being $k$-power free morphisms. \\

In the literature, there are examples of morphisms such that their fixed points are  abelian $k$-power free while the morphisms themselves are not abelian $k$-power free. Two notable examples include the following morphisms:

\begin{itemize}
	\item The morphism in \cite{CassCurr} 
$$
\sigma_4 =
\begin{dcases}
\begin{aligned}
0 &\mapsto 03 &\qquad 3 &\mapsto 1 \\
1 &\mapsto 43 &\qquad 4 &\mapsto 01
\end{aligned}
\end{dcases}
$$
	is not abelian 3-power free since $ \sigma_4 (0011014) = 0 3034 3430 3430 1 $ contains an abelian 3-power. However, Cassaigne et al. showed that $ \sigma_4^{\omega}(0) $ contains no abelian cubes.
	\item The morphism in \cite{Rao-R} $$
\sigma_6 =
\begin{dcases}
\begin{aligned}
	& a \mapsto ace & b  \mapsto adf && c \mapsto bdf
	\\ & d  \mapsto bdc & e \mapsto aef && f  \mapsto bce \\
\end{aligned}
\end{dcases}
$$
	is not abelian square-free since $ \sigma_6 (afd) = a ceb ceb dc $ contains an abelian square. However, Rao and Rosenfeld proved that  $ \sigma_6^{\omega}(a) $ is abelian square-free.
\end{itemize}
\vspace{3mm}

In this work, we extended this line of inquiry into the abelian context by providing an example of a morphism defined over a binary alphabet. In Theorem~\ref{T14-template}, we observed the same phenomenon: we constructed a morphism over a binary alphabet that is abelian 16-power free but not abelian 15-power free, yet with an abelian 14-power free fixed point. This example is significant because it provides the first known case in the abelian setting where there is a two-level gap between the power-freeness of the morphism and that of its fixed point.
\section{On the Uniform Bounds of Lengths of Words Avoiding Abelian Powers}\label{Uniform Section}
In this section, we consider words on the binary alphabet $\{a, b\}$.
In the study of infinite words, a fascinating problem is the construction of words that avoid certain factors while also being abelian power-free.
We are interested in the length of such words. 
Our key result in this direction is Theorem \ref{4 consecutive a 6AP}. It states that there exists an infinite word $\Omega_1$ with two properties: 
\begin{enumerate}
    \item  avoiding the factors $aaaaa$  and $bb$,
    \item  being abelian 6-power free. 
\end{enumerate}
In the spirit of this result, we start by asking the same question for smaller abelian powers. We refer the reader to Table \ref{table:imp} for the results of this section.\\

We discovered that any word of length 9 avoiding the factor $bb$ contains an abelian 3-power, see \autoref{fig: word 9}.
\begin{figure}[H]
    \centering
    \begin{forest}
    for tree={
    grow=east}
      [
      [a,  
      [aa, [aab, [aaba,[aabaa,[aabaab,[aabaaba,[aabaabaa]]  ]],[aabab,[aababa,[aababaa]] ] ]   ] ],
      [ab, [aba,[abaa, [abaab,[abaaba,[abaabaa,[abaabaab] ] ]]] ,[abab, [ababa,[ababaa ] ]] ]     ]
      ],
      [b,[ba,
      [baa,[baab,[baaba,
      [baabaa,[baabaab,[baabaaba]]]
      ]]],
      [bab,[baba,[babaa],[babab]]]
      ]]]
    \end{forest}
    \caption{Maximal-length words avoiding $bb$ and abelian $3$-power}
    \label{fig: word 9}
\end{figure}
Guided by this observation, we now detail the backtracking search used in our computations to prove the propositions in this section, see  \url{https://github.com/nihantanisali/Abelian-Power-Free-Morphisms/blob/main/Section4}.
\begin{enumerate}
    \item Start with the empty word.
    \item Extend the current word by appending (right-concatenating) $a$ or $b$.
    \item After each extension:
    \begin{itemize}
        \item Check whether any forbidden factor appears.\\
              If yes, discard this branch.
        \item Otherwise, check whether the word contains an abelian $k$-power.\\
              If yes, discard this branch.
        \item If both checks pass, recursively extend the  word again.
    \end{itemize}
    \item When the desired length $n$ is reached, record the word as valid.
\end{enumerate}

\begin{proposition} \label{p41}
Words of length 18 that avoid the factor $bb$ must contain an abelian 4-power. This bound is tight because there exists a word of length $17$ of this form that does not contain an abelian $4$-power: $babaaabaaabaaabab$. 
\end{proposition}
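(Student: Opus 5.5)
The statement has two halves: a tightness half (exhibit a word of length $17$) and an impossibility half (every length-$18$ word avoiding $bb$ contains an abelian $4$-power). I will treat them separately, and for the second use the backtracking search already described in this section.

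\textbf{Tightness.} I verify directly that $w=babaaabaaabaaabab$ has no factor $bb$, which is clear by inspection. I then verify that $w$ has no factor $v_1v_2v_3v_4$ with $\psi(v_1)=\cdots=\psi(v_4)$ and $v_i\neq\epsilon$. Since $|w|=17$, any block length $d$ satisfies $4d\le 17$, so $d\in\{1,2,3,4\}$. For each such $d$ and each starting position $i$ with $i+4d-1\le 17$, I compare the numbers of $b$'s in the four consecutive blocks of length $d$. In the binary case the Parikh vector is determined by the length together with $|v|_b$, so this count is enough.

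This check is short. The case $d=1$ forbids $aaaa$ and $bbbb$, and $w$ has at most three consecutive $a$'s. For $d=2,3,4$ a sliding count of $b$'s settles everything. Heuristically, the pattern $(aaab)^3$ sitting between the $ba$/$ab$ ends is exactly what breaks the periodic block counts at every shift.

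\textbf{Impossibility.} I run the backtracking procedure: start from $\epsilon$, append $a$ or $b$, and prune a branch whenever the newly created suffix contains $bb$ or creates an abelian $4$-power. Only suffixes need to be checked, since every proper prefix already survived. So for the new word $u$ of length $L$, I only need to examine, for each $d\le L/4$, whether the last $4d$ letters split into four blocks with equal $b$-counts. The claim is that the search tree dies out: no surviving node reaches length $18$, while some branch (the one through the word above) reaches length $17$.

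The tree is small. Without $bb$ there are only Fibonacci-many candidates, at most about $F_{20}\approx 6765$ words of length $18$. Abelian $4$-power pruning also eliminates $aaaa$ immediately, which cuts the tree further. The whole search is therefore a finite computation, and it can be presented as in Figure \ref{fig: word 9}. I would also exploit reversal symmetry, since the two conditions are invariant under reversing a word, to halve the work.

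\textbf{Main obstacle.} The difficulty is not conceptual but lies in certifying exhaustiveness. A purely human proof would need a case analysis on the structure of the word as blocks $a^{r}b$ with $r\in\{0,1,2,3\}$, showing that every such sequence long enough forces four consecutive equal-count windows. I would attempt this only as a sanity check, for example by noting that windows of length $4$ must avoid equal counts and thereby constraining the block sequence. The rigorous argument will rely on the exhaustive computer search referenced in the linked code.
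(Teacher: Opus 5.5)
Your proposal is correct and is essentially the paper's own argument. The paper proves this proposition by the same exhaustive backtracking search over $bb$-free words (pruning any branch that acquires a forbidden factor or an abelian $4$-power), together with the explicit length-$17$ witness $babaaabaaabaaabab$; your refinements --- testing only suffixes created by the last appended letter, the $F_{20}=6765$ count of $bb$-free candidates of length $18$, and the hand check of the witness for block lengths $d\le 4$ --- are all sound.
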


We now compare our results with the results in the literature. By \cite{fici}, we know that every binary word of length 10 contains an abelian cube.
If we restrict our attention to $bb$-free words, in Figure \ref{fig: word 9}, we reduce the length in \cite{fici} by 1. Dekking \cite{dekking} constructed an abelian $4$-power free infinite binary word. This contrast is sharper: once the factor $bb$ is forbidden, in Proposition \ref{p41}, this bound collapses from infinity to 18.\\

\begin{proposition}\label{aaa,bb,25,5power}
Any word of length 25 that avoids the factors $ aaa $ and $ bb $ must contain an abelian 5-power. This bound is tight because there exists a word of length 24 of this form that
does not contain an abelian $5$-power: $aababaababaababaababaaba $.
\end{proposition}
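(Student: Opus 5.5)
Two things have to be established: that the given word of length $24$ avoids abelian $5$-powers, and that every word of length $25$ avoiding $aaa$ and $bb$ contains one. The paper's own approach is computer-assisted, using the backtracking search described above. A structural reduction makes that search small enough to check by hand in principle.

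\textbf{The witness.} For $w=aababaababaababaababaaba$, we first observe directly that $aaa$ and $bb$ do not occur. An abelian $5$-power with block length $d$ needs $5d\le 24$, so $d\le 4$. For each $d\in\{1,2,3,4\}$ and each starting position $i\le 25-5d$, compare the Parikh vectors of the five consecutive blocks. Since the alphabet is binary and all blocks have the same length, it suffices to compare the number of $b$'s per block. Using the prefix sums $s_j=|w_1\cdots w_j|_b$, the quantity $s_{i+d}-s_i$ must not be constant along $i,i+d,\dots,i+4d$. This is a finite table of at most about $60$ checks. Note that $w$ is a factor of the fixed point of the Fibonacci-like structure $(aab)(ab)$, which explains its balancedness. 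That near-periodicity is also why the short periods $d=3,5$ need care: $d=5$ is excluded by length, and for $d=3$ the $b$-counts fluctuate between $1$ and $2$.

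\textbf{Words of length $25$.} Every word over $\{a,b\}$ avoiding $aaa$ and $bb$ can be parsed, apart from a boundary letter, as a concatenation of the blocks $x=ab$ and $y=aab$. We run the backtracking search from the procedure above with forbidden factors $aaa$ and $bb$ and $k=5$, extending letter by letter and pruning a branch as soon as an abelian $5$-power appears. An abelian $5$-power here must be a suffix of the current word, so each extension only requires testing suffixes of length $5d$. The search tree is tiny: the number of $\{aaa,bb\}$-free words of length $n$ grows like the Padovan/Fibonacci-type sequence, so there are only a few thousand in total up to length $25$, and the claim is that no branch survives to length $25$.

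To make this less opaque one can argue in the $x,y$ coding. In any long window the pattern of $x$'s and $y$'s must be nearly balanced. Otherwise, five consecutive copies of $ab$ or $aab$, or a block such as $(abaab)^5$ restricted appropriately, yield an abelian $5$-power with $d\in\{2,3,5\}$. Once the window is balanced it is Sturmian-like, and Sturmian-type words have bounded imbalance, so some $d$ gives five consecutive blocks with equal $b$-count; for instance $d=5$ or $d=8$ fits in length $25$ or $40$ accordingly.

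\textbf{Main obstacle.} I expect the main obstacle to be the exhaustive part. A clean human proof would require showing that length $25$ is forced, which does not follow from the Sturmian heuristic alone. I would therefore rely on the computer search, as the paper does for Figure~\ref{fig: word 9} and Proposition~\ref{p41}, and report the maximal surviving words, among them the witness above, as certificates of tightness.
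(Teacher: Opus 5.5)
Your proposal takes the same route as the paper. The lower bound rests on the same exhaustive backtracking search over $\{aaa,bb\}$-free words; testing only suffixes of length $5d$ after each extension is a valid speed-up. Your $20+15+10+5=50$ prefix-sum checks for $d\le 4$ on the witness do go through, although the witness is simply the periodic word $(aabab)^4aaba$, not something Fibonacci-like. The Sturmian paragraph in the $x,y$ coding is only a heuristic (for example, five blocks with $d=8$ need length $40>25$), so, as you yourself note, keep it out of the argument proper.
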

\begin{proposition}\label{aaaa,bb,78,5power}
Any word of length 78 that avoids the factors $aaaa$ and $ bb $  must contain an abelian 5-power. This bound is tight because there exists a word of length 77 of this form that
does not contain an abelian $5$-power:  $$aaababababaaababababaaabaaabaaabaaababababaaabaaabaaabaaababababaaababababaaa.$$
\end{proposition}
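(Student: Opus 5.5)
The statement has two halves: an existence half (a word of length $77$) and a universal half (every word of length $78$ fails). I would treat them separately.

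For the tightness half, I would take the displayed word of length $77$ and check directly that it avoids both $aaaa$ and $bb$ and contains no abelian $5$-power. The factor check is a linear scan. For the abelian check I would use prefix Parikh vectors. Set $\pi(j)=(|w_1\cdots w_j|_a, |w_1\cdots w_j|_b)$. Then $w$ contains an abelian $5$-power exactly when there are a start index $i$ and a block length $d\ge 1$ with $i+5d\le 77$ such that
$$\pi(i+(r+1)d)-\pi(i+rd)=\pi(i+d)-\pi(i)\quad\text{for } r=1,\dots,4.$$
Since $5d\le 77$ forces $d\le 15$, this is a finite, small verification.

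The universal half is a finite exhaustive search, done with the backtracking procedure described above. The steps are:
\begin{enumerate}
\item Start from the empty word and append $a$ or $b$.
\item After each append, prune the branch if it creates $aaaa$ or $bb$.
\item Also prune if the new word has an abelian $5$-power. Every factor of a surviving word is itself admissible, so the search tree is closed under taking prefixes. Hence it suffices to test only the abelian $5$-powers ending at the last letter, that is, suffixes of length $5d$, for each $d$.
\item Show that no branch reaches length $78$, while some branch reaches length $77$.
\end{enumerate}
This is exactly the statement. Surviving words of length $77$ are recorded, and the displayed word should be among them.

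Two reductions keep the search manageable. First, because $bb$ is forbidden, every admissible word factors into blocks $a^j b$ with $0\le j\le 3$ (apart from the boundaries). So I would branch on these blocks rather than on letters. This gives branching factor about $3$ to $4$, with abelian-power pruning cutting the tree heavily. Second, the reversal symmetry $w\mapsto \tilde w$ preserves all three constraints, so it can serve as a consistency check.

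The main obstacle is making the claim trustworthy rather than merely computed. The pruning is valid precisely because admissibility is factor-closed, so I would state that explicitly. I would also cross-check the answer by recomputing the analogous maxima of Proposition \ref{p41} and Proposition \ref{aaa,bb,25,5power}, namely $17$ and $24$, with the same code, since reproducing those confirms the implementation. A fully human-readable proof seems out of reach. At best one could partially structure it: an abelian $5$-power with block length $d=4$ or $d=5$ is forced once long runs of $(ab)^m$ or $(aab)^m$ appear. Still, the length-$78$ bound rests on the computer search.
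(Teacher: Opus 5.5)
Your proposal is correct and is essentially the paper's own argument. The paper proves this proposition by computer, using the same letter-by-letter backtracking search you describe (discard a branch once $aaaa$, $bb$, or an abelian $5$-power appears; no branch reaches length $78$), with the displayed $77$-letter word as the tightness witness. One caution on your optional block-branching speed-up: the block $a^0b=b$ can occur only at the very start, and you must still test every letter-prefix, including abelian $5$-powers ending inside a newly appended block and length-$78$ words ending in a partial block $a^j$; otherwise the search could miss a length-$78$ word.
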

\section{A Binary Morphism Which is not Abelian Power-Free but its Fixed Point is Abelian 5-Power Free}\label{not_abelian_power_free}
In this section, we give a binary morphism which is not abelian power-free, yet has an abelian $5$-power free fixed point. Hence, we also give a resolution of the 2-ball-box distribution problem.\\

Initially, we attempted to prove Theorem \ref{5free} using an argument analogous to that of
Theorem \ref{4 consecutive a 6AP}. However, we could not find an abelian 5-power free morphism $h$ on the binary alphabet $\{a, b\}$ such that the fixed point 
$h^\omega(a)$ avoids the factor $bb$. We conjecture that no abelian 5-power free morphism on $\{a, b\}$ exists whose fixed point also avoids
the factor $bb$, and we pose this as an open problem in our last section. Furthermore, to the best of our
knowledge, no known morphism avoids all abelian powers while having a fixed point that
is abelian 5-power free. Our next result gives the first example of such a morphism, and the proof applies the template method. This reveals an infinite gap (in terms of the degree of abelian
power avoidance) between a morphism and its fixed point.

\begin{theorem}\label{5free}
    The fixed point $h^\omega(a)$ of the morphism 
    \begin{align*}
        h: a&\mapsto aaaab\\
           b&\mapsto ababab
    \end{align*} 
    is abelian $5$-power free and it avoids the factor $bb$. However, the morphism itself is not abelian power-free for any $k \ge 2$. 
\end{theorem}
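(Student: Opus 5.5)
The proof splits into three parts: $bb$-avoidance, failure of abelian $k$-power freeness of $h$ for every $k\ge 2$, and abelian $5$-power freeness of $h^\omega(a)$ via the template method.

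\emph{Avoiding $bb$.} Both $h(a)=aaaab$ and $h(b)=ababab$ contain no $bb$. Every image begins with $a$, so any junction $h(x)h(y)$ has the form $\cdots b\,a\cdots$. Hence $bb$ never appears in $h^\omega(a)$.

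\emph{$h$ is not abelian $k$-power free.} For each $k\ge 2$ I will exhibit an abelian $k$-power free word $u$ such that $h(u)$ contains an abelian $k$-power. The natural candidate is $u=b^r$. Its image $(ababab)^r=(ab)^{3r}$ contains $(ab)^k$ as soon as $3r\ge k$, and $b^r$ is abelian $k$-power free whenever $r<k$. Choosing $r=\lceil k/3\rceil<k$ works for all $k\ge 2$. Even simpler, $u=b$ already gives $(ab)^3$, which covers $k\le 3$.

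\emph{The fixed point is abelian $5$-power free.} The frequency matrix is
$$M=\begin{pmatrix}4&1\\3&3\end{pmatrix},\qquad \det M=9.$$
I need to check $\|M^{-1}\|<1$. The smallest singular value of $M$ is $\sqrt{\lambda_{\min}(M^TM)}$, where
$$M^TM=\begin{pmatrix}25&13\\13&10\end{pmatrix},$$
with trace $35$ and determinant $81$. This gives $\lambda_{\min}=(35-\sqrt{901})/2\approx 2.49$, so $\|M^{-1}\|\approx 0.63<1$. Conditions (T1) and (T2) are clear, so Theorem \ref{templatem} applies.

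I would then run the template method on $T_5$, adapting the sieve of Proposition \ref{SievingT8} if the number of ancestors is too large. Take $G=\mathbb Z^2/G_h\cong\mathbb Z/9\mathbb Z$ with $f(a)=\overline1$. Then $f(b)$ must satisfy $4+f(b)\equiv0$, so $f(b)=\overline5$. Check: $f(ababab)=3\cdot 6=18\equiv 0$, as required.

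The prefix scores are $0,1,2,3,4$ for $h(a)$ and $0,1,6,7,3,4$ for $h(b)$, so $P=\{\overline0,\overline1,\overline2,\overline3,\overline4,\overline6,\overline7\}$.

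\begin{itemize}
\item \textbf{Trivial progressions.} Six consecutive boundary prefixes with equal score reduce, by the minimality argument of Proposition \ref{SievingT8}, to an abelian $5$-power in $h^{n-1}(a)$. The coincidences $f(a)=f(ab)$ and $f(aaa)=f(abab)$ need separate handling.
\item \textbf{Nontrivial progressions.} Length-$6$ arithmetic progressions in $P$ give finitely many parent templates, one for each choice among the doubled scores. For each selected parent I compute all ancestors and the bound of Lemma \ref{new search bound}, which with $N=6$ and $k=5$ is about $10+4(4+c)$. I then verify by computer that no factor of $h^\omega(a)$ up to that length realizes an ancestor.
\end{itemize}

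\emph{Main obstacle.} The hard step is the fixed-point verification. The group $\mathbb Z/9\mathbb Z$ is not prime, and since $3\cdot \overline3=\overline0$ there are many progressions of step $\overline3$ (for example $\overline0,\overline3,\overline6,\ldots$). Several scores also have two preimages. As a result the sieve leaves more templates than in the earlier cases.

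If the sieve becomes unwieldy, I would fall back on the plain template method for $T_5$ with the improved search bound of Lemma \ref{new search bound}. Since $k=5$ and $N=6$ are small, the full ancestor set should still be computationally feasible.
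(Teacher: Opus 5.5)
Your proposal is correct and follows the paper's proof. It uses the same $bb$ check and the same $b$-power counterexample, obtains $\|M^{-1}\|\approx 0.63$, and runs the template method on $T_5$ by computer with the improved search bound; the paper finds $16163$ parents and no further ancestors, maximal sum $c=2$, hence bound $34$, and concludes by checking that $h^5(a)$ has no abelian $5$-power. Your $\mathbb Z/9\mathbb Z$ sieve is exactly the paper's closing remark, which reduces the parents to $120$ but is not needed.

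There is one slip in the sieve branch: $f(ab)=\overline 6\neq f(a)$. The score collisions between prefixes with different Parikh vectors are $f(aaa)=f(abab)=\overline 3$ and $f(aaaa)=f(ababa)=\overline 4$. The constant progressions on these two scores are handled through the common suffixes $ab$ and $b$. In addition, the prefix $a$ occurs in both $h(a)$ and $h(b)$, so it must be counted twice when enumerating the selected templates.
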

\begin{proof}
    Let $k\geq 2$. First, we will show that the morphism is not abelian $k$-power free.
    Note that the image 
    $$h( \underbrace{b\cdots b}_{k-1 \text{ many}})= \underbrace{ab \cdots  ab }_{3k-3 \text{ many }} $$
    of the abelian $k$-power free word $b^{k-1}$ is not abelian $k$-power free as $3k-3 \geq k$.
    To check the condition on $bb$, just note that in $h(a)$ and $h(b)$ there is no $bb$ as a factor, and $h(aa)$, $h(ab) $, $h(ba)$ do not produce the factor $bb$ as well. 
    Hence, $h^\omega(a)$ avoids the factor $bb$.
    Note that the frequency matrix of the morphism is 
    $$M=\begin{pmatrix}
        4 &1\\
        3& 3
    \end{pmatrix}$$
    and the operator norm of $M^{-1}$ is approximately $0.63<1$. Hence, the template method is applicable to decide whether $h^\omega(a)$ is abelian $5$-power free. The template
    $$
T_5 = [ \epsilon, \epsilon, \epsilon, \epsilon, \epsilon, \epsilon, (0,0), (0,0), (0,0), (0,0)]
 $$
    has $16163$ parents and no other ancestors. 
    We computed that
    $$\max_{i< j \le 5}\big|\sum_{r=i}^{j-1} d_r \cdot (1, \ldots, 1)\big|=2$$ is the maximum value over all ancestors of $T_5$.
    By Lemma \ref{new search bound}, we see that $h^\omega (a)$ contains an abelian $5$-power if and only if one of the ancestors of $T_5$ is realized by a factor of length at most
$$ N+k-1 + (k-1)(N-2 + 2)= 6+5-1 +(5-1) (6-2+2)=34,$$
    where $ N=\max\left\lbrace |h(a)|,|h(b)|\right\rbrace=6$, $ k=5 $.
    All factors of length $\leq 34$ of $h^\omega(a)$ appear in $h^4(a)$.
    Hence, it is enough to check if the ancestors of $T_5$ are realized by a factor of $ h^4(a)$.  Since there is only one generation of ancestors of $T_5$, instead of checking if the ancestors are realized by a factor of $h^4(a)$, it is enough to check that if $h^5 (a)$ is abelian $5$-power free.	 Indeed, $h^5(a)$ is abelian $5$-power free, and this completes the proof.  The accompanying code of this proof can be found at \url{https://github.com/nihantanisali/Abelian-Power-Free-Morphisms/blob/main/Section5}.
\end{proof}

By Proposition \ref{bp-word} and Proposition \ref{p41}, we know that every 2-ball-box distribution admits a 5-BP. The next corrollary gives a complete resolution of the 2-ball-box distribution problem.

\begin{corollary}
	There is a 2-ball-box distribution that does not admit a 6-BP.
\end{corollary}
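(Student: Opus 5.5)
The plan is to combine Theorem \ref{5free} with Proposition \ref{bp-word}. Let $w=h^\omega(a)$ be the fixed point of the morphism $h(a)=aaaab$, $h(b)=ababab$ from Theorem \ref{5free}. That theorem says $w$ is abelian $5$-power free and avoids the factor $bb$. We build a $2$-ball-box distribution $\mathcal{D}$ whose associated word $w_\mathcal{D}$ equals $w$. Then we apply Proposition \ref{bp-word} with $k=6$: since $w_\mathcal{D}$ is abelian $5$-power free, $\mathcal{D}$ has no $6$-BP.

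To define $\mathcal{D}$, read off the rules relating distributions to words in reverse. Put ball $1$ in box $1$. For each $i\ge 1$:
\begin{itemize}
\item if the $i$-th letter of $w$ is $b$, ball $i+1$ goes in the same box as ball $i$;
\item if the $i$-th letter is $a$, ball $i+1$ goes in the next box.
\end{itemize}
Equivalently, $\mathcal{D}(i)=1+|w_1\cdots w_{i-1}|_a$.

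We then check the two surjection conditions:
\begin{itemize}
\item $\mathcal{D}$ is non-decreasing by construction, and consecutive balls differ in box label by at most $1$.
\item Each box is non-empty. This needs infinitely many $a$'s in $w$, which holds because $h(a)$ and $h(b)$ both contain $a$. It also uses that box labels increase in steps of at most one, so no box is skipped.
\item Each box contains at most $2$ balls. A box with three balls $i,i+1,i+2$ would force $w_iw_{i+1}=bb$, which $w$ avoids.
\end{itemize}
So $\mathcal{D}$ is a $2$-ball-box distribution, and by construction $w_\mathcal{D}=w$.

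The only point needing care is the indexing in Proposition \ref{bp-word}, namely that abelian $(k-1)$-powers correspond to $k$-BPs. With $k=6$ this is exactly abelian $5$-powers, so no real obstacle is expected. I would also note in passing that this is sharp: Proposition \ref{p41} together with Proposition \ref{bp-word} shows every $2$-ball-box distribution admits a $5$-BP.
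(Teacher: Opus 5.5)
Your proposal is correct and follows the same route as the paper. You take the abelian $5$-power free, $bb$-avoiding fixed point $h^\omega(a)$ of Theorem~\ref{5free}, translate it into a $2$-ball-box distribution, and apply Proposition~\ref{bp-word} with $k=6$. Your explicit construction $\mathcal{D}(i)=1+|w_1\cdots w_{i-1}|_a$ and your check of the surjection conditions supply details the paper leaves implicit, since it only illustrates the resulting distribution in a figure.
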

\begin{figure}[H]
	\centering
	\renewcommand{\arraystretch}{1.2} 
	\begin{tabular}{c}
		\hline
		 The first 24 terms of the infinite word produced by $h$ in Theorem \ref{5free}: $ h^{\omega}(a) $ \\
		\hline
        $ aaaabaaaabaaaabaaaababab $ \\ 
		\hline
	\end{tabular}
	\vspace{3mm} 
	
	\begin{center}
		\boxone{1}{1}
		\boxone{2}{2}
		\boxone{3}{3}
		\boxone{4}{4}
		\boxtwo{5}{5}
		\boxone{6}{7}
		\boxone{7}{8}
		\boxone{8}{9}
		\boxtwo{9}{10}
		\boxone{10}{12}
		\boxone{11}{13}
		\boxone{12}{14}
		\boxtwo{13}{15}
            \boxone{14}{17}
            \boxone{15}{18}
            \boxone{16}{19}
            \boxtwo{17}{20}
            \boxtwo{18}{22}
            \boxtwo{19}{24}
		\begin{tikzpicture}
		\draw[draw=white] (0.5,-0.5) rectangle ++ (1,1);
		\node[below] at (1,-0.5) {};
		\node[below] at (1,0.5) {\huge{$\cdots$}};
		\end{tikzpicture}
	\end{center}
	\caption{A 2-ball-box distribution with no 6-BP}
	\label{fig:enter-label5}
	
\end{figure}	

\begin{remark}
    In Figure \ref{fig:enter-label5}, we permit placing exactly one ball in $ 3 $ consecutive boxes. This is because the corresponding infinite word has $aaaa$ as a factor, but not $ aaaaa $. By Proposition \ref{bp-word}, Proposition \ref{SievingT8} and Theorem \ref{T14-template}, we obtain the following consequences. There is a 2-ball-box distribution which admits a 9-BP but no 10-BP, and that does not have $3$ consecutive boxes in which there is exactly one ball. There is a 2-ball-box distribution which admits a 14-BP but no 15-BP placing exactly one ball into a maximum of $2$ consecutive boxes.
\end{remark}

\begin{remark}
Consider the morphism $h$ in Theorem \ref{5free}.
Let $G=\Z /  9 \Z$ and $f:\Sigma^*\to G$ be a monoid morphism such that
	$$ f(a)=\overline{2}, \ f(b)=\overline{1}. $$
Notice that $f$ vanishes on $h(\Sigma)$, and
it takes the following values when evaluated on $\operatorname{Pref}(h)$:

\begin{align*}
    P&: = f(\operatorname{Pref} (h) )=\{f(\epsilon), f(a),f(aa), f(aaa) ,f(aaaa), f(ab),f(aba),\\
    &f(abab),f(ababa)\}=\{\overline{0}, \overline{2},\overline{3},\overline{4},\overline{5}, \overline{6}, \overline{8} \}.
	\end{align*}
All the values in $P$ have a unique preimage in $\operatorname{Pref}(h)$ under $f$, except for $ \overline{6}$ and $\overline{8}$: 

\begin{align} 
    f(aaa)&=f(abab)=\overline{6}, \nonumber\\
    f(aaaa)&=f(ababa)=\overline{8}.\label{5rem}
\end{align}

We see that there are 12 non-trivial arithmetic progressions of length 6 in $P$, namely: \\

$(\overline{2},\overline{5}, \overline{8}, \overline{2},\overline{5}, \overline{8} ), (\overline{2},\overline{8}, \overline{5}, \overline{2},\overline{8}, \overline{5} ), (\overline{5},\overline{2}, \overline{8}, \overline{5},\overline{2}, \overline{8} ), (\overline{5},\overline{8}, \overline{2}, \overline{5},\overline{8}, \overline{2} ), (\overline{8},\overline{5}, \overline{2}, \overline{8},\overline{5}, \overline{2} ), (\overline{8},\overline{2}, \overline{5}, \overline{8},\overline{2}, \overline{5} ),$

$(\overline{0}, \overline{3}, \overline{6}, \overline{0}, \overline{3}, \overline{6}), (\overline{0}, \overline{6}, \overline{3}, \overline{0}, \overline{6}, \overline{3}), (\overline{3}, \overline{0}, \overline{6}, \overline{3}, \overline{0}, \overline{6}), (\overline{3}, \overline{6}, \overline{0}, \overline{3}, \overline{6}, \overline{0}), (\overline{6}, \overline{3}, \overline{0}, \overline{6}, \overline{3}, \overline{0}), (\overline{6}, \overline{0}, \overline{3}, \overline{6}, \overline{0}, \overline{3}).$
\\

By \eqref{5rem} and as in Proposition \ref{SievingT8} and Theorem \ref{T14-template}, we obtain 120 selected parent templates of $T_5$, and $h^\omega (a)$ contains an abelian $5$-power if and only if one of these selected parent templates is realized by a factor of length at most 34.
Therefore, this sieve argument reduces the search space from 16163 to 120 for parents.
However, we did not prefer this, and we applied the template method directly as our code terminates in several minutes.

\end{remark}
\section{Some Applications} \label{some_applications}
 \textbf{I.} There are some results known to be equivalent to van der Waerden's theorem. One of them is as follows (see \cite{Rabung}). If $  \left\lbrace a_1,a_2, \dots \right\rbrace  $ is an infinite sequence of integers satisfying $ 0 < a_{i+1} - a_i < r $ for all $i$ and for some $ r $, then the sequence contains arbitrarily long arithmetic progressions. So, we can ask the following question. 
 If $  \left\lbrace x_0,x_1,\dots \right\rbrace  $ is an   increasing sequence of real numbers such that the set
 $F= \left\lbrace x_{i+1}-x_i \ | \ i\ge 1\right\rbrace  $ of differences between consecutive terms is finite, then does the sequence $ \left\lbrace x_n \right\rbrace  $ contain arbitrarily long arithmetic progressions? Since there are infinite words over the binary alphabet $ \left\lbrace a,b\right\rbrace  $ which are abelian $ k $-power free for some $ k $, we quickly see that the answer is negative. If we symbolize the letter $ a $ as $ +1 $ increment and the letter $ b $ as $ +\sqrt{2} $ increment, then using Dekking's morphism \eqref{dekm}, we obtain a real sequence only containing arithmetic progressions (non-trivial) of length at most 4 with the desired property. It is important to choose the increments of the letters $ a $ and $ b $ to be linearly independent over $\mathbb Q$. In this case, the real sequence contains no non-trivial
$k$-AP if and only if the associated infinite word is abelian 
$(k-1)$-power free. Moreover, our result (Theorem \ref{5free}) indicates that if $F=\{1,\sqrt{2}\}$ and $x_{n+1}-x_n=\sqrt{2}$, then 
$x_{n+2}-x_{n+1}$ cannot be $\sqrt{2}$, then the answer is still not affirmative. There is such a real sequence which does not contain a non-trivial 6-AP by Theorem \ref{5free}. One may obtain similar results with more constraints by applying Theorem \ref{3 consecutive a 9AP} and Theorem \ref{T14-template}.\\
 
 \textbf{II.} Let a particle move on the grid
 $\{1,\ldots,N\} \times \{1,\ldots,N\}$ for a sufficiently large natural number $ N  $, starting at the point $ (1,1). $  We assume that the particle always moves forward (at least one of its coordinates increases). 
 In this case, is there any regularity in every walking choice of the particle? That is, to what extent does the walk on $\{1,\ldots,N\} \times \{1,\ldots,N\}$ 
 intersect with a line? 
 As in Application I, some walking choices of the particle lack regularity due to the existence of abelian power-free infinite words. As an example, we can again give the infinite word obtained from  Dekking's abelian 4-power free morphism $ h $ over the binary alphabet $\Sigma=\{a,b\}$:
 $ h(a)=abb$, $h(b)=aaab.$ If we represent the letter $ a $ as a step to the right and up (increment is the vector (1,1)) and the letter $ b  $ as a step to the right (increment is the vector (1,0)), then using this abelian $ 4 $-power free infinite word, we obtain a walking choice only containing arithmetic progressions of length at most $ 4 $. 
 An arithmetic progression of length $ k+1 $ on the grid $\{1,\ldots,N\} \times \{1,\ldots,N\}$ means that the inputs are simultaneously  arithmetic progressions of length $ k+1 $ on $ \left\lbrace 1,2,\dots,N\right\rbrace  $. In addition, if we impose new restrictions on the particle's trajectory, such as making it step in the $ (1,0) $ direction at most once, as in the box problem, then the regularity of its trajectory will increase relatively. If we consider the walking choice produced by the infinite word in Theorem \ref{5free}, then we obtain a walking choice which does not contain a non-trivial 6-AP. One can obtain similar results  by applying Theorem \ref{3 consecutive a 9AP} and Theorem \ref{T14-template} again. This is somewhat expected, as the density of such a sequence (the particle's trajectory) is 0, in fact it contains $O(N)$ elements in the grid $\{1,\ldots,N\} \times \{1,\ldots,N\}$. Thus, it is relatively small in this grid and it is a sparse subset. By the multi-dimensional Szemer\'edi theorem, if $A$ is a subset of $(\mathbb Z_{>0})^2$ with positive upper density, then it contains any finite configurations up to homothety, in particular it contains arbitrarily long arithmetic progressions. If our set is sparse, like the primes in the positive integers \cite{GreenTao}, then one can use the Relative Szemer\'edi theorem \cite{CFZ} for sparse sets if we have some randomness conditions. However, if a sparse set in $(\mathbb Z_{>0})^2$ has $O(N)$ elements in the grid $\{1,\ldots,N\} \times \{1,\ldots,N\}$, then it is generally far away from satisfying the conditions of the
 Relative Szemer\'edi theorem. Hence, once again such a sequence may not contain arbitrarily long arithmetic progressions.
 \begin{figure}[H]
\begin{tikzpicture}[xscale=0.6 , yscale=0.35,  every node/.style={font=\tiny}]
\draw[very thin, color=gray,opacity=0.2] (0,-1) grid (26,19);
\draw[->,color=black] (0,-1) -- (27,-1);

\foreach \x in {1,2,3,...,25}
\draw[shift={(\x,-1)},color=black] (0pt,2pt) -- (0pt,-2pt) node[below] 
{\footnotesize $\x$};
\draw[->,color=black] (0,-1) -- (0,20);

\foreach \y in {1,...,19}
\draw[shift={(0,\y-1)},color=black] (2pt,0pt) -- (-2pt,0pt) node[left] {\footnotesize $\y$};

\draw (26.4,-0.2) node {Ball};
\draw (+0.2,20) node[anchor=north west] {Box};
\draw[-,color=teal,very thick] (1,0) -- (5,4);
\draw[-,color=teal,very thick] (5,4) -- (6,4);
\draw[-,color=teal,very thick] (6,4) -- (10,8);
\draw[-,color=teal,very thick] (10,8) -- (11,8);
\draw[-,color=teal,very thick] (11,8) -- (15,12);
\draw[-,color=teal,very thick] (15,12) -- (16,12);
\draw[-,color=teal,very thick] (16,12) -- (20,16);
\draw[-,color=teal,very thick] (20,16) -- (21,16);
\draw[-,color=teal,very thick] (21,16) -- (22,17);
\draw[-,color=teal,very thick] (22,17) -- (23,17);
\draw[-,color=teal,very thick] (23,17) -- (24,18);
\draw[->,color=teal,very thick] (24,18) -- (25,18);

\draw[very thick,color=lightgray] (1,0) -- (21,16);
\foreach \y in {0,...,4}
\draw[shift={(5*\y+1, 4*\y)}]  node [circle,fill,gray, inner sep=1.5pt] { };
\end{tikzpicture}\\
 \caption{  Particle moves corresponding to the word   $  aaaab \ aaaab  \ aaaab  \ aaaab  \ abab   $ obtained in Theorem \ref{5free}  }
\end{figure} 
\vspace{3mm}

 \textbf{III.} The concept of \emph{repetitive semigroups} originated from a well-known result in combinatorics, namely van der Waerden's theorem. This theorem laid the foundation for the study of repetitive structures in semigroups, where the concept of \emph{k-repetitiveness} was introduced.
 
 \begin{definition} \cite{prillo}
 	Let $ S $ be a semigroup, $ k \ge 2 $ be an integer, $ \Sigma $ be an alphabet, $ \phi : \Sigma^+ \to S$ be a morphism and $ w $ be a word of $ \Sigma^+ $, where the set of all non-empty words over $ \Sigma $ is denoted by $ \Sigma^+ $. If there exist $ x, w_1,\dots, w_k,y $ with $ w_i \in \Sigma^+, $ $ 1 \le i \le k , $ and $ x,y \in \Sigma^* $ such that 
 	$$ w =xw_1w_2 \dots w_ky $$
 	and 	$$ \phi (w_1)=\phi (w_2) = \dots =\phi (w_k), $$  	 	
 	then we say that $ w $ contains a $ k $-power modulo $ \phi. $
 	Moreover, if 
 	$$ |w_1|=|w_2| = \dots =|w_k|, $$
 	then  we say that $ w $ contains a uniform $ k $-power modulo $ \phi.$ Note that uniform $ k $-powers modulo $ \phi $ are usually called additive $ k $-powers.
 	The morphism $ \phi : \Sigma^+ \to S $ is said to be $ k $-repetitive (resp. uniform $ k $-repetitive) if there exists an integer $ R(\phi, k) $ such that any word $ w \in \Sigma^+ $ of length $ R(\phi, k) $ contains a $ k $-power modulo $ \phi $ (resp. a uniform $ k $-power modulo $ \phi $). If for each finite alphabet $ \Sigma $, each morphism $ \phi : \Sigma^+ \to S $ is $ k $-repetitive (resp. uniformly $ k $-repetitive), then the semigroup $ S $ is called $ k $-repetitive (resp. uniformly $ k $-repetitive). 	
 \end{definition}

 Recent results in the literature have determined whether some well-known semigroups are $ k $-repetitive for certain values of $  k $:
 
 \begin{itemize}
 	\item It was shown by Cassaigne et al. \cite{CassCurr} using the morphism $ \sigma_4 $ that $ \mathbb{Z} $ is not uniformly 3-repetitive.
 	\item Rao and Rosenfeld \cite{Rao-R} demonstrated that $ \mathbb{Z}^2 $ is not uniformly 2-repetitive using the morphism $ \sigma_6 $.
 \end{itemize}
 
  Observe that two words of the same length over $ \{0, 1\} $ have the same sum if and only if they are permutations of each other. Hence, using this observation, Dekking’s morphism over a binary alphabet shows that it is
  possible to avoid additive 4-powers. 
  Dekking's construction was extended in \cite{CMRS}, where they gave a decision algorithm as in the case of abelian powers (the template method). Andrade and Mol \cite{Andrade} implemented this decision algorithm and deduced the avoidability of abelian and additive powers in infinite rich words.
   In our work, we demonstrated that even in the case of morphisms belonging to a very specific class, such as the morphisms we defined in Theorems \ref{5free}, \ref{3 consecutive a 9AP} and \ref{T14-template}, the semigroup $ \mathbb{Z} $ cannot be $ k $-repetitive for certain values of $ k $ (5, 9 and 14, respectively) with more constraints. 
\section{Some Open Problems} \label{open problems}
\begin{enumerate}
\item Using Dekking's result (Theorem 
\ref{dekkinglemma}), can we find a morphism $h$ on the binary alphabet $\Sigma=\{a,b\}$ which is abelian 6-power free, and the fixed point $h^{\omega}(a)$ avoids $a^5$ and $b^2$?
\item Using Dekking's result (Theorem \ref{dekkinglemma}), can we find a morphism $h$ on the binary alphabet $\Sigma=\{a,b\}$ which is abelian 9-power free, and the fixed point $h^{\omega}(a)$ avoids $a^4$ and $b^2$?
\item Over a binary alphabet, if a morphism satisfies  $(C1)-(C3)$ from Theorem \ref{carpi}, does it also satisfy  $(O1)-(O3)$ from Theorem \ref{our result}? Under Carpi's conjecture, does the set of conditions in Theorem \ref{our result} give a complete characterization of abelian $ n $-power free morphisms defined on binary alphabets?
\item Can we find a morphism $h$ on the binary alphabet $\Sigma=\{a,b\}$ which is abelian 5-power free, and the fixed point $h^{\omega}(a)$ avoids $b^2$?
\item Can we find a morphism $h$ on the binary alphabet $\Sigma=\{a,b\}$ which is abelian $k$-power free for some $k$, and the fixed point $h^{\omega}(a)$ is $5$-power free and avoids $b^2$?
\end{enumerate}

\vspace{0.5cm}

\textbf{Acknowledgements:}
This work was partially supported by the Scientific and Technological Research Council of Turkey (TÜBİTAK) with the project number 122F027, and it was carried out by the second author. The first author was also partially supported by TÜBİTAK with the same project number. The second author's research is partially supported by the Science Academy's Young Scientist Award (BAGEP 2025). The third author is supported by the Horizon-Europe MSCA-DN project ENCODE.  
The authors would like to thank Narad Rampersad for sharing an initial Python implementation of the template method for abelian cubes, which made a significant contribution to the development of our work. We thank Hümeyra Akyüz for her valuable suggestions on the codes used in the article. We also thank Sarim Sarfraz for pointing out an issue in the statement of Theorem 2.9. Lastly, we are grateful to the  anonymous referees for their invaluable comments and suggestions, which immensely contributed to improving the quality of our manuscript.

\end{document}